\documentclass{amsart}

\usepackage{latexsym,amsfonts,amssymb,exscale,enumerate,comment}
\usepackage{amsmath,amsthm,amscd}

\addtolength{\hoffset}{-1.6cm}
\addtolength{\textwidth}{3cm}

%\makeindex

\usepackage{url}
\usepackage[bookmarks=true,%
    colorlinks=true,%
    linkcolor=blue,%
    citecolor=blue,%
    filecolor=blue,%
    menucolor=blue,%
    urlcolor=blue,%
    breaklinks=true]{hyperref}

%%%%%%%%%%%%%%%%% DIAGRAM PACKAGES %%%%%%%%%%%%%%%%%%%%%%%%%%%%%%%%%%%%%%%%%%%%%%

% xypic
\input xy
\usepackage[all]{xy}
\xyoption{line}
\xyoption{arrow}
\xyoption{color}
\SelectTips{cm}{}
%%

%%-----------------------------------------------------
%%% tikz
\usepackage{tikz}
\usetikzlibrary{shapes,snakes}
\usetikzlibrary{decorations.markings}
\usetikzlibrary{decorations.pathreplacing}
\tikzstyle directed=[postaction={decorate,decoration={markings,
    mark=at position #1 with {\arrow{>}}}}]

\newcommand{\hackcenter}[1]{
 \xy (0,0)*{#1}; \endxy}

\tikzset{->-/.style={decoration={
  markings,
  mark=at position #1 with {\arrow{>}}},postaction={decorate}}}

\tikzset{middlearrow/.style={
        decoration={markings,
            mark= at position 0.5 with {\arrow{#1}} ,
        },
        postaction={decorate}
    }
}

%%-------------------------------------

%%-------------------------------------
% Bold greek letters
\usepackage{bm}

% including eps files
%\usepackage{graphicx}
%\usepackage{color}
%%%%%%%%%%%%%%%%%%%%%%%%%%%%%%%%%

\def\Kom{\textsf{Kom}}
\def\tKom{\widetilde{\textsf{Kom}}}
\def\tKomm{\widetilde{\textsf{Kom}^-}}
\def\sl{\mathfrak{sl}}

\def\uk{{\underline{k}}}
\def\uy{{\underline{y}}}
\def\uz{{\underline{z}}}
\def\u0{{\underline{0}}}
\def\um{{\underline{m}}}

\def\la{\langle}
\def\ra{\rangle}
\newcommand{\sE}{\cal{E}}
\newcommand{\sF}{\cal{F}}
\newcommand{\cE}{\cal{E}}
\newcommand{\cF}{\cal{F}}

\newcommand{\onel}{\1_{\lambda}}

\newcommand{\onelp}{\1_{\lambda'}}

%%%%% JS MACROS  %%%%

\def\Id{\mathrm{Id}}

% new macros
%\newcommand{\HOM}{{\rm HOM}}

\theoremstyle{plain}
\newtheorem{theorem}{Theorem}

\newtheorem{corollary}[theorem]{Corollary}
\newtheorem{proposition}[theorem]{Proposition}
\newtheorem{lemma}[theorem]{Lemma}

\theoremstyle{definition}

\newtheorem{definition}[theorem]{Definition}

\theoremstyle{definition}
\newtheorem{remark}[theorem]{Remark}
%%Peter changed the numbering of Remark so that it wouldn't renumber everything afterwards.

\numberwithin{equation}{section}
\numberwithin{theorem}{section}
%%%%%%%%%%%%%%%%%%%%%%%%%%%%%%%%%%%%%%%%%%%%%%%%%%%%%%%%%%%%%%%%%%%%%%%%%%%%%%%

% sl2 macros
\newcommand{\sym}{{\rm Sym}}
\newcommand{\maps}{\colon}
\newcommand{\und}[1]{\underline{#1}}

% Custom summation

% Equal sign with a reference on top
\newcommand{\refequal}[1]{\xy {\ar@{=}^{#1}
(-1,0)*{};(1,0)*{}};
\endxy}
%%%%%%%%%%%%%%%%%%%%%%%%%%%%%%%%%%%%%%%%%%%%%%%%%%%%%%%%%%%%%%%%%%%%%%%%%%%%%%%

\hfuzz=6pc

%%%%%%%%%%%%%%%%% GENERAL MACROS %%%%%%%%%%%%%%%%%%%%%%%%%%%%%%%%%%%%%%%%%%%%%%
\newcommand{\To}{\Rightarrow}

\newcommand{\Hom}{{\rm Hom}}

\renewcommand{\to}{\rightarrow}

%%%%%%%%%%%%%%%%%%%%%%%%%%%%%%%%%%%%%%%%%%%%%%%%%%%%%%%%%%%%%%%%%%%%%%%%%%%%%%%

 %% Mike's macros

   %% finitely-generated modules
   %% finite-dimensional modules
  %% fin-gen projective modules

\def\Id{\mathrm{Id}}

\def\mf{\mathfrak}

\def\Br{{\mathrm{Br}}}

%%%Josh's macros

\def\Ext{{\mathrm{Ext}}}

%%%%%

% Equation numbering:
\numberwithin{equation}{section}
% maintenance \textcolor[rgb]{0.00,0.49,0.25}{}
%\def\YY#1{\textcolor[rgb]{1.00,0.00,0.50}{[YY: #1]}}%
%
%
%
%

\def\b{$\blacktriangleright$}

%

% hat and tilde

\let\hat=\widehat

% some Greek letters

%\let\phi=\varphi
%\let\theta=\vartheta
\let\epsilon=\varepsilon

% some sets

\usepackage{bbm}
\def\C{{\mathbb{C}}}

\def\Z{{\mathbbm Z}}
%\def\Q{{\mathbbm Q}}

% maths macros

\def\cal#1{\mathcal{#1}}%
\def\1{\mathbbm{1}}%
\def\nn{\notag}

\def\la{\langle}
\def\ra{\rangle}

\renewcommand{\l}{\lambda}

%----------------------------------------------------------------------

\def\cal#1{\mathcal{#1}}

%-------------------------------------------
\newcommand\nc{\newcommand}
\nc\rnc{\renewcommand}
\nc\Kar{\operatorname{Kar}}
\nc\End{\operatorname{End}}

\newcommand{\scs}{\scriptstyle}
%%%%%%%%%%%%%%%%%%%%%%%%%%%%%%%%%%%%%%%%%%%%%%%%%%%%%%%%%%%%%%%%%%%%%%%%%%%%%%%

\newcommand{\Ucat}{\cal{U}}
\newcommand{\UcatD}{\dot{\cal{U}}}

\nc\Sym{\operatorname{Sym}}

\def\cC{\mathcal{C}}
\def\cD{\mathcal{D}}
\def\cK{\mathcal{K}}
\def\cP{\mathcal{P}}
\def\tcP{\widetilde{\mathcal{P}}}
\def\bA{\mathbb{A}}
\def\bY{\mathbb{Y}}
\def\teta{\widetilde{\eta}}

%% Allow better page breaks
\allowdisplaybreaks

% ==============================================================================
% ==============================================================================

\title{Curved Rickard complexes and link homologies}

\begin{document}
\setcounter{tocdepth}{1}

\author{Sabin Cautis}
\email{cautis@math.ubc.ca}
\address{Department of Mathematics \\ University of British Columbia \\ Vancouver, BC}

\author{Aaron D. Lauda}
\email{lauda@usc.edu}
\address{Department of Mathematics\\ University of Southern California \\ Los Angeles, CA}

\author{Joshua Sussan}
\email{jsussan@mec.cuny.edu}
\address{Department of Mathematics \\ CUNY Medgar Evers \\ Brooklyn, NY}
\date{\today}

\maketitle

\begin{abstract}
Rickard complexes in the context of categorified quantum groups can be used to construct braid group actions. We define and study certain natural deformations of these complexes which we call curved Rickard complexes. One application is to obtain deformations of link homologies which generalize those of Batson-Seed \cite{BSeed} and Gorsky-Hogancamp \cite{GHy} to arbitrary representations/partitions. Another is to relate the deformed homology defined algebro-geometrically in \cite{CK4} to categorified quantum groups (this was the original motivation for this paper).
\end{abstract}

\tableofcontents

\section{Introduction}

Knot homologies have had many significant applications in knot theory and low dimensional topology. Rasmussen~\cite{Ras} proved Milnor's conjecture about the slice genus of torus knots by utilizing a spectral sequence coming from the work of Lee~\cite{Lee1,Lee2}. In another direction, Kronheimer and Mrowka proved that Khovanov homology detects the unknot using a spectral from Khovanov homology to instanton Floer homology~\cite{Kron-Mrow}.

One of the main ideas from \cite{CKL-skew} is that the braiding necessary to construct a link homology can be defined, as a by-product of skew Howe duality, using the theory of Rickard complexes (or Chuang-Rouquier complexes \cite{CR}). These are complexes which live in the homotopy category of categorified quantum groups and give rise to braid group actions.

\subsection{Curved Rickard complexes}

A typical Rickard complex has the following form
\begin{equation*}
\cal{F}_i^{(\l_i)} \onel  \xrightarrow{d^+} {\sE}_i^{(1)}{\sF}_i^{(\l_i+1)} \1_{\l} \langle 1 \rangle \xrightarrow{d^+} \cdots \xrightarrow{d^+}  \cal{E}_i^{(k)}\cal{F}_i^{(\l_i+k)} \onel  \la k \ra \xrightarrow{d^+} \cdots
\end{equation*}
Motivated by the construction in \cite{GHy} we deform such complexes by adding maps back in the other direction \begin{equation}\label{eq:1}
{\sF}_i^{(\l_i)} \1_{\l}
\substack{\xrightarrow{d^+} \\ \xleftarrow[ud^-]{}}
{\sE}_i^{(1)}{\sF}_i^{(\l_i+1)} \1_{\l} \langle 1 \rangle
\substack{\xrightarrow{d^+} \\ \xleftarrow[ud^-]{}}
\cdots
\substack{\xrightarrow{d^+} \\ \xleftarrow[ud^-]{}}
{\sE}_i^{(k)} {\sF}_i^{(\l_i+k)}\1_{\l} \langle k \rangle
\substack{\xrightarrow{d^+} \\ \xleftarrow[ud^-]{}}
\cdots
\end{equation}
where $u$ is a formal parameter of homological and internal degree $[2]\langle -2 \rangle$. The fact that these are ``curved'' complexes is a consequence of a detailed computation in the categorified quantum group (c.f. Proposition \ref{prop:curved}). We then adapt some facts in \cite{GHy} to show that these complexes braid. The deformed complexes from \cite{GHy} correspond to two term Rickard complexes of the form $\1_\l \to \sE_i \sF_i \1_\l \la 1 \ra$.

\subsection{Deformed link homologies}

Starting with \cite{CaKa-sl2} an algebro-geometric construction of $\mf{sl}_m$-link homologies was obtained using certain convolution varieties in the affine Grassmannian of $PGL_m$. Subsequent work of \cite{CKL-skew,Cautis-rigid,Cautis} related this construction to categorified quantum groups.

A deformation of this construction was obtained in \cite{CK4} using the geometry of the Beilinson-Drinfeld Grassmannian. This deformed $\mf{sl}_m$-link homology generalized the deformed $\mf{sl}_2$-link homology defined earlier by Batson and Seed~\cite{BSeed}. One of the motivations of the current paper is to give an interpretation of the construction from \cite{CK4} in terms of categorified quantum groups (c.f. Section \ref{sec:comparison}).

As an application of curved Rickard complexes we obtain the following:
\begin{itemize}
\item A deformation of (coloured) $\mf{sl}_m$-link homology (Theorem \ref{thm:deflink}).
\item A deformation of (coloured) HOMFLYPT link homology (Theorem \ref{thm:defHOMFLY}).
\item A deformation of clasps in the contexts of $\mf{sl}_m$ and HOMFLYPT link homologies (Corollaries \ref{cor:slm-higher-clasp} and \ref{cor:HOMFLY-higher-clasp}).
\end{itemize}
Recall that clasps were used in \cite{Cautis} (resp. \cite{Cautis-Rem}) to obtain $\mf{sl}_m$-link homologies corresponding to arbitrary representations (resp. HOMFLYPT homologies of links labeled by arbitrary partitions). Our HOMFLYPT deformation extends that of Gorsky and Hogancamp \cite{GHy} to arbitrary partitions.

These deformations, apart from (likely) playing a fundamental role in the structure of knot homologies, also have potential applications. For instance, \cite{GHy} shows how to use these deformations to partially resolve some conjectures relating HOMFLYPT homology to the geometry of the Hilbert scheme of points on $\C^2$~\cite{GNR,OR3,OR1,OR2}.  These results depend upon a spectral sequence from HOMFLYPT homology converging to the homology of the unlinked components as well as some earlier results about the HOMFLYPT homology of torus links \cite{EH-torus, Hog-higher, Mellit}.

\subsection{Further remarks and directions}

There are two ways to view curved Rickard complexes. The first, which is the focus in this paper, is as deformations of Rickard complexes. The second is as an action of the braid group on degree two endomorphisms of the identity. We expect that this latter interpretation will have further applications in the theory of categorified quantum groups. One such application that comes to mind, but which we do not pursue in this paper, is proving a categorical analogue of the classical isomorphism between the Kac-Moody and loop presentations of quantum affine algebras.

The braid group action mentioned above actually extends to an action on higher degree endomorphisms of the identity. From the point of view of Rickard complexes this corresponds to studying what one might call ``higher degree homotopies''. In this paper we only consider the simplest possible (and by many accounts the most natural) homotopy, namely the maps $ud^-$ from (\ref{eq:1}). We hope that the study of these higher homotopies will also lead to various applications (particularly in the context of knot homologies).

\subsection{Outline of paper}
We review foundational material on categorified quantum groups and the definition of Rickard complexes in Section \ref{sec-qgroup}. In Section \ref{sec-bubbles} we prove some key technical results about so-called bubbles in the categorified quantum group. Some aspects of the general theory of curved complexes are reviewed in Section \ref{sec-curved} followed by a discussion of curved Rickard complexes. Sections \ref{sec-slm} and \ref{sec-homfly} contain our main applications: deformations of $\mf{sl}_m$-link homologies and HOMFLYPT homologies respectively.

%Finally, in Section \ref{sec-appendix} we generalize some calculations of degree two bubbles used in our results to arbitrary degree.  We hope that these computations may be used in the future in some extension of these homology theories.

\subsection{Acknowledgements}
S.C.~is supported by an NSERC Discovery grant. A.D.L.~is partially supported by the NSF grants DMS-1255334 and DMS-1664240. J.S.~is partially supported by the NSF grant DMS-1807161, PSC-CUNY Award  61028-00 49, and Simons Foundation Collaboration Grant 516673.  The authors are grateful to Matt Hogancamp for many illuminating discussions on $y$-ification. J.S.~ would like to thank Shotaro Makisumi for explaining his work related to \cite{GHy}.

\section{The categorified quantum group $\cal{U}_Q$} \label{sec-qgroup}

\subsection{Conventions}
By a graded category we will mean a category equipped with an
auto-equivalence $\la 1 \ra$. We denote by $\la l \ra$ the auto-equivalence
obtained by applying $\la 1 \ra$ $l$ times. If $A,B$ are two objects then
$\Hom^l(A,B)$ will be short-hand for $\Hom(A,B \la l \ra)$. A graded additive
$\Bbbk$-linear $2$-category is a category enriched over graded additive
$\Bbbk$-linear categories, that is, a $2$-category $\cal{K}$ such that the Hom
categories $\Hom_{\cal{K}}(A,B)$ between objects $A$ and $B$ are graded
additive $\Bbbk$-linear categories and the composition maps
$\Hom_{\cal{K}}(A,B) \times \Hom_{\cal{K}}(B,C) \to \Hom_{\cal{K}}(A,C)$ form a
graded additive $\Bbbk$-linear functor.

Given a $1$-morphism $A$ in an additive $2$-category $\cal{K}$, we let $\oplus_{[n]} A$ denote the direct sum $\oplus_{k=0}^{n-1} A \la n-1-2k \ra$.

Given an additive category $\cC$, we let $\Kom(\cC)$ denote the homotopy category of
complexes in $\cC$. Write $\Kom^+(\cC)$, respectively $\Kom^-(\cC)$  for the corresponding subcategory of bounded below, respectively above, complexes.  By convention, we work with \emph{cochain} complexes, so an object $(X,d)$ of $\Kom(\cC)$ is a collection of objects $X^i$ in $\cC$ together with maps
\[
\cdots \xrightarrow{d_{i-2}} X^{i-1} \xrightarrow{d_{i-1}} X^i \xrightarrow{d_i} X^{i+1} \xrightarrow{d_{i+1}} \cdots
\]
such that $d_{i+1} d_i =0$ and only finitely many of the $X^i$'s are nonzero.
A morphism $f \maps (X,d) \to (Y,d')$ in $\Kom(\cC)$ consists of
a collection of morphisms $f_i \maps X^i \to Y^i$ in $\cC$ such that $f_{i+1} d_i = d'_i f_i$ modulo null-homotopic maps.
Recall that morphisms $f,g \maps (X,d) \to (Y,d')$ in $\Kom(\cC)$ are called homotopic
if there exist morphisms $h^i \maps X^{i} \to Y^{i-1}$ such that $f_i-g_i = h^{i+1}d_i+d'_{i-1}h^i$ for all $i$.
A morphism of complexes is said to be null-homotopic if it is homotopic to the zero map.
We let $X[n]$ denote the complex obtained from $X$ by shifted each object $X^i$ down by $n$.

Given an additive $2$-category $\cal{K}$, define $\Kom^+(\cal{K})$ to be the additive $2$-category with the same objects as $\cal{K}$ and additive hom categories $\Hom_{\Kom^+(\cal{K})}(A,B) := \Kom^+(\Hom_{\cal{K}}(A,B))$. The horizontal composition in $\Kom(\cal{K})$ is given using the horizontal composition from $\cal{K}$ together with the tensor product of complexes.  The $2$-category $\Kom^-(\cal{K})$ can be defined analogously.  When no confusion is likely to arise, we often write $\Kom(\cal{K})$ in place of $\Kom^{\pm}(\cal{K})$.

% - - - - - - - - - - - - - - - - - - - - - - - - - - - - - - - - -
%
\subsection{Categorified quantum group} \label{sec:catquantum}
%
% - - - - - - - - - - - - - - - - - - - - - - - - - - - - - - - - -

For this article we restrict our attention to simply-laced Kac-Moody algebras. These algebras are associated to a symmetric Cartan data consisting of
\begin{itemize}
\item a free $\Z$-module $X$ (the weight lattice),
\item for $i \in I$ ($I$ is an indexing set) there are elements $\alpha_i \in X$ (simple roots) and $\Lambda_i \in X$ (fundamental weights),
\item for $i \in I$ an element $h_i \in X^\vee = \Hom_{\Z}(X,\Z)$ (simple coroots),
\item a bilinear form $(\cdot,\cdot )$ on $X$.
\end{itemize}
Write $\langle \cdot, \cdot \rangle \maps X^{\vee} \times X
\to \Z$ for the canonical pairing. This data should satisfy:
\begin{itemize}
\item $(\alpha_i, \alpha_i) = 2$ for any $i\in I$,
\item $(\alpha_i,\alpha_j) \in \{ 0, -1\}$  for $i,j\in I$ with $i \neq j$,
\item $\la i,\lambda\ra :=\langle h_i, \lambda \rangle =  (\alpha_i,\lambda)$
  for $i \in I$ and $\lambda \in X$,
\item $\langle h_j, \Lambda_i \rangle =\delta_{ij}$ for all $i,j \in I$.
\end{itemize}
Hence $(a_{ij})_{i,j\in I}$ is a symmetrizable generalized Cartan matrix, where $a_{ij}=\langle
h_i, \alpha_j \rangle=(\alpha_i, \alpha_j)$.  We will sometimes denote the bilinear pairing $(\alpha_i,\alpha_j)$ by $i \cdot j$ and abbreviate $\la i,\lambda\ra$ to $\lambda_i$.
We denote by $X^+ \subset X$ the dominant weights which are of the form $\sum_i \lambda_i \Lambda_i$ where $\lambda_i \ge 0$.

We write $W=W_{\mf{g}}$ for the Weyl group of type $\mf{g}$ and $\Br_{\mf{g}}$ for the corresponding braid group.  The Weyl group $W$ acts on the weight lattice $X$ via
\begin{equation} \label{eq:Weyl-lambda}
 s_i(\l) = \l - \alpha_i^{\vee}(\l) \l = \l - \la i, \l \ra \l
\end{equation}
for each simply transposition $s_i \in W$.

\begin{definition}
Associated to a symmetric Cartan datum, define a {\em choice of scalars $Q$} consisting of:
\begin{itemize}
  \item $\left\{ t_{ij}  \mid \text{ for all $i,j \in I$} \right\}$,
\end{itemize}
such that
\begin{itemize}
\item $t_{ii}=0$ for all $i \in I$ and $t_{ij} \in \Bbbk^{\times}$ for $i\neq j$,
 \item $t_{ij}=t_{ji}$ when $a_{ij}=0$.
\end{itemize}
\end{definition}

The choice of scalars $Q$ controls the form of the KLR algebra $R_Q$ that governs the upward oriented strands.  The $2$-category
$\Ucat_Q(\mf{g})$ is controlled by the products $v_{ij}=t_{ij}^{-1}t_{ji}$ taken
over all pairs $i,j\in I$.  When the underlying graph of the simply-laced Kac-Moody algebra $\mf{g}$ is a tree, in particular a Dynkin diagram, all choices of $Q$ lead to isomorphic KLR-algebras and these isomorphisms extend to isomorphisms of categorified quantum groups $\cal{U}_Q(\mf{g}) \to \cal{U}_{Q'}(\mf{g})$ for scalars $Q$ and $Q'$~\cite{Lau-param}.

Let $\cal{U}_Q(\mf{g})$ denote the non-cyclic form of the categorified quantum group from~\cite{CLau}.  Though a cyclic form of the categorified quantum group has been defined \cite{BHLW2}, for our purposes the non-cyclic variant is the most natural version.  By \cite[Theorem 2.1]{BHLW2} the cyclic and non-cyclic variant are isomorphic as $2$-categories.

\begin{definition}
The $2$-category  $\Ucat_Q:= \Ucat_Q(\mf{g})$ is the graded linear $2$-category consisting of:
\begin{itemize}
\item \textbf{Objects} $\lambda$ for $\lambda \in X$.
\item \textbf{$1$-morphisms} are formal direct sums of (shifts of) compositions of
$$\onel, \quad \1_{\lambda+\alpha_i} \sE_i= \1_{\lambda+\alpha_i} \sE_i\onel, \quad \text{ and }\quad
\1_{\lambda-\alpha_i} \sF_i= \1_{\lambda-\alpha_i} \sF_i\onel$$
for $i \in I$ and $\lambda \in X$.  We denote the grading shift by $\la 1 \ra$, so that for each $1$-morphism $x$ in $\cal{U}_Q$ and $t\in \Z$ we a $1$-morphism $x\la t\ra$.

\item \textbf{$2$-morphisms} are $\Bbbk$-vector spaces spanned by compositions of coloured, decorated tangle-like diagrams illustrated below.
\begin{align}
\hackcenter{\begin{tikzpicture}[scale=0.8]
    \draw[thick, ->] (0,0) -- (0,1.5)
        node[pos=.5, shape=coordinate](DOT){};
    \filldraw  (DOT) circle (2.5pt);
    \node at (-.85,.85) {\tiny $\lambda +\alpha_i$};
    \node at (.5,.85) {\tiny $\lambda$};
    \node at (-.2,.1) {\tiny $i$};
\end{tikzpicture}} &\maps \cal{E}_i\onel \to \cal{E}_i\onel \la i\cdot i \ra  & \quad
 &
%\hackcenter{\begin{tikzpicture}[scale=0.8]
%    \draw[thick, <-] (0,0) -- (0,1.5)
%        node[pos=.5, shape=coordinate](DOT){};
%    \filldraw  (DOT) circle (2.5pt);
%    \node at (-.85,.85) {\tiny $\lambda -\alpha_i$};
%    \node at (.5,.85) {\tiny $\lambda$};
%    \node at (-.2,1.4) {\tiny $i$};
%\end{tikzpicture}}
%\maps \cal{F}_i\onel \to \cal{F}_i\onel\la i\cdot i \ra  \nn \\
%   & & & \nn \\
  \hackcenter{\begin{tikzpicture}[scale=0.8]
    \draw[thick, ->] (0,0) .. controls (0,.5) and (.75,.5) .. (.75,1.0);
    \draw[thick, ->] (.75,0) .. controls (.75,.5) and (0,.5) .. (0,1.0);
    \node at (1.1,.55) {\tiny $\lambda$};
    \node at (-.2,.1) {\tiny $i$};
    \node at (.95,.1) {\tiny $j$};
\end{tikzpicture}} \;\;\maps \cal{E}_i\cal{E}_j\onel  \to \cal{E}_j\cal{E}_i\onel\la -i\cdot j \ra
% &
%  &
%  \hackcenter{\begin{tikzpicture}[scale=0.8]
%    \draw[thick, <-] (0,0) .. controls (0,.5) and (.75,.5) .. (.75,1.0);
%    \draw[thick, <-] (.75,0) .. controls (.75,.5) and (0,.5) .. (0,1.0);
%    \node at (1.1,.55) {\tiny $\lambda$};
%    \node at (-.25,.1) {\tiny $i$};
%    \node at (1,.1) {\tiny $j$};
%\end{tikzpicture}}\;\; \maps \cal{F}_i\cal{F}_j\onel  \to \cal{F}_j\cal{F}_i\onel\la -i\cdot j \ra
 \nn \smallskip\\
\hackcenter{\begin{tikzpicture}[scale=0.8]
    \draw[thick, <-] (.75,2) .. controls ++(0,-.75) and ++(0,-.75) .. (0,2);
    \node at (.4,1.2) {\tiny $\lambda$};
    \node at (-.2,1.9) {\tiny $i$};
\end{tikzpicture}} \;\; &\maps \onel  \to \cal{F}_i\cal{E}_i\onel\la  1 + {\lambda}_i  \ra   &
    &
\hackcenter{\begin{tikzpicture}[scale=0.8]
    \draw[thick, ->] (.75,2) .. controls ++(0,-.75) and ++(0,-.75) .. (0,2);
    \node at (.4,1.2) {\tiny $\lambda$};
    \node at (.95,1.9) {\tiny $i$};
\end{tikzpicture}} \;\; \maps \onel  \to\cal{E}_i\cal{F}_i\onel\la  1 - {\lambda}_i  \ra   \nn \smallskip \\
     % & & & \nn \\
\hackcenter{\begin{tikzpicture}[scale=0.8]
    \draw[thick, ->] (.75,-2) .. controls ++(0,.75) and ++(0,.75) .. (0,-2);
    \node at (.4,-1.2) {\tiny $\lambda$};
    \node at (.95,-1.9) {\tiny $i$};
\end{tikzpicture}} \;\; & \maps \cal{F}_i\cal{E}_i\onel \to\onel\la  1 + {\lambda}_i  \ra   &
    &
\hackcenter{\begin{tikzpicture}[scale=0.8]
    \draw[thick, <-] (.75,-2) .. controls ++(0,.75) and ++(0,.75) .. (0,-2);
    \node at (.4,-1.2) {\tiny $\lambda$};
    \node at (-.2,-1.9) {\tiny $i$};
\end{tikzpicture}} \;\;\maps\cal{E}_i\cal{F}_i\onel  \to\onel\la  1 - {\lambda}_i  \ra  \nn
\end{align}
\end{itemize}
In this $2$-category (and those throughout the paper) we
read diagrams from bottom to top and right to left.
That is, in a diagram representing a $1$-morphism from $\lambda$ to $\mu$, the region on the right will be labeled $\lambda$ and the region on the left will be labeled $\mu$.
The identity $2$-morphism of the $1$-morphism
$\cal{E}_i \onel$ is
represented by an upward oriented line labeled by $i$ and the identity $2$-morphism of $\cal{F}_i \onel$ is
represented by a downward such line.

The $2$-morphisms satisfy the following relations:
\begin{enumerate}
\item \label{item_cycbiadjoint-cyc} The $1$-morphisms $\cal{E}_i \onel$ and $\cal{F}_i \onel$ are biadjoint (up to a specified degree shift). %  These conditions are expressed diagrammatically as

  \item The dot $2$-morphisms are cyclic with respect to this biadjoint structure.
\begin{equation}\label{eq_cyclic_dot-cyc}
\hackcenter{\begin{tikzpicture}[scale=0.8]
    \draw[thick, ->]  (0,.4) .. controls ++(0,.6) and ++(0,.6) .. (-.75,.4) to (-.75,-1);
    \draw[thick, <-](0,.4) to (0,-.4) .. controls ++(0,-.6) and ++(0,-.6) .. (.75,-.4) to (.75,1);
    \filldraw  (0,-.2) circle (2.5pt);
    %\node at (1,-.9) { $\lambda+\alpha_i$};
    \node at (-1,.9) { $\lambda$};
    \node at (.95,.8) {\tiny $i$};
\end{tikzpicture}}
\;\; = \;\;
\hackcenter{\begin{tikzpicture}[scale=0.8]
    \draw[thick, <-]  (0,-1) to (0,1);
    \node at (.8,-.4) { $\lambda+\alpha_i$};
    \node at (-.5,-.4) { $\lambda$};
    \filldraw  (0,.2) circle (2.5pt);
    \node at (-.2,.8) {\tiny $i$};
\end{tikzpicture}}
\;\; = \;\;
\hackcenter{\begin{tikzpicture}[scale=0.8]
    \draw[thick, ->]  (0,.4) .. controls ++(0,.6) and ++(0,.6) .. (.75,.4) to (.75,-1);
    \draw[thick, <-](0,.4) to (0,-.4) .. controls ++(0,-.6) and ++(0,-.6) .. (-.75,-.4) to (-.75,1);
    \filldraw  (0,-.2) circle (2.5pt);
    %\node at (1,-.9) { $\lambda+\alpha_i$};
    \node at (1.3,.9) { $\lambda + \alpha_i$};
    \node at (-.95,.8) {\tiny $i$};
\end{tikzpicture}}
\end{equation}

The $Q$-cyclic relations for crossings are given by
\begin{equation} \label{eq_cyclic}
\hackcenter{
\begin{tikzpicture}[scale=0.8]
    \draw[thick, <-] (0,0) .. controls (0,.5) and (.75,.5) .. (.75,1.0);
    \draw[thick, <-] (.75,0) .. controls (.75,.5) and (0,.5) .. (0,1.0);
    \node at (1.1,.65) { $\lambda$};
    \node at (-.2,.1) {\tiny $i$};
    \node at (.95,.1) {\tiny $j$};
\end{tikzpicture}}
\;\; := \;\; t_{ij}^{-1}
\hackcenter{\begin{tikzpicture}[scale=0.7]
    \draw[thick, ->] (0,0) .. controls (0,.5) and (.75,.5) .. (.75,1.0);
    \draw[thick, ->] (.75,0) .. controls (.75,.5) and (0,.5) .. (0,1.0);
    \draw[thick] (0,0) .. controls ++(0,-.4) and ++(0,-.4) .. (-.75,0) to (-.75,2);
    \draw[thick] (.75,0) .. controls ++(0,-1.2) and ++(0,-1.2) .. (-1.5,0) to (-1.55,2);
    \draw[thick, ->] (.75,1.0) .. controls ++(0,.4) and ++(0,.4) .. (1.5,1.0) to (1.5,-1);
    \draw[thick, ->] (0,1.0) .. controls ++(0,1.2) and ++(0,1.2) .. (2.25,1.0) to (2.25,-1);
    \node at (-.35,.75) {  $\lambda$};
    \node at (1.3,-.7) {\tiny $i$};
    \node at (2.05,-.7) {\tiny $j$};
    \node at (-.9,1.7) {\tiny $i$};
    \node at (-1.7,1.7) {\tiny $j$};
\end{tikzpicture}}
\quad = \quad t_{ji}^{-1}
\hackcenter{\begin{tikzpicture}[xscale=-1.0, scale=0.7]
    \draw[thick, ->] (0,0) .. controls (0,.5) and (.75,.5) .. (.75,1.0);
    \draw[thick, ->] (.75,0) .. controls (.75,.5) and (0,.5) .. (0,1.0);
    \draw[thick] (0,0) .. controls ++(0,-.4) and ++(0,-.4) .. (-.75,0) to (-.75,2);
    \draw[thick] (.75,0) .. controls ++(0,-1.2) and ++(0,-1.2) .. (-1.5,0) to (-1.55,2);
    \draw[thick, ->] (.75,1.0) .. controls ++(0,.4) and ++(0,.4) .. (1.5,1.0) to (1.5,-1);
    \draw[thick, ->] (0,1.0) .. controls ++(0,1.2) and ++(0,1.2) .. (2.25,1.0) to (2.25,-1);
    \node at (1.2,.75) {  $\lambda$};
    \node at (1.3,-.7) {\tiny $j$};
    \node at (2.05,-.7) {\tiny $i$};
    \node at (-.9,1.7) {\tiny $j$};
    \node at (-1.7,1.7) {\tiny $i$};
\end{tikzpicture}} .
\end{equation}

Sideways crossings are equivalently defined by the following identities:
\begin{equation} \label{eq_crossl-gen-cyc}
\hackcenter{
\begin{tikzpicture}[scale=0.8]
    \draw[thick, ->] (0,0) .. controls (0,.5) and (.75,.5) .. (.75,1.0);
    \draw[thick, <-] (.75,0) .. controls (.75,.5) and (0,.5) .. (0,1.0);
    \node at (1.1,.65) { $\lambda$};
    \node at (-.2,.1) {\tiny $i$};
    \node at (.95,.1) {\tiny $j$};
\end{tikzpicture}}
\;\; := \;\;
\hackcenter{\begin{tikzpicture}[scale=0.7]
    \draw[thick, ->] (0,0) .. controls (0,.5) and (.75,.5) .. (.75,1.0);
    \draw[thick, ->] (.75,-.5) to (.75,0) .. controls (.75,.5) and (0,.5) .. (0,1.0) to (0,1.5);
    \draw[thick] (0,0) .. controls ++(0,-.4) and ++(0,-.4) .. (-.75,0) to (-.75,1.5);
    %\draw[thick] (.75,0) .. controls ++(0,-1.2) and ++(0,-1.2) .. (-1.5,0) to (-1.55,2);
    \draw[thick, ->] (.75,1.0) .. controls ++(0,.4) and ++(0,.4) .. (1.5,1.0) to (1.5,-.5);
    %\draw[thick, ->] (0,1.0) .. controls ++(0,1.2) and ++(0,1.2) .. (2.25,1.0) to (2.25,-1);
    \node at (1.85,.55) {  $\lambda$};
    \node at (1.75,-.2) {\tiny $j$};
    \node at (.55,-.2) {\tiny $i$};
    \node at (-.9,1.2) {\tiny $j$};
    \node at (.25,1.2) {\tiny $i$};
\end{tikzpicture}}
 \qquad \quad
 \hackcenter{
\begin{tikzpicture}[scale=0.8]
    \draw[thick, <-] (0,0) .. controls (0,.5) and (.75,.5) .. (.75,1.0);
    \draw[thick, ->] (.75,0) .. controls (.75,.5) and (0,.5) .. (0,1.0);
    \node at (1.1,.65) { $\lambda$};
    \node at (-.2,.1) {\tiny $i$};
    \node at (.95,.1) {\tiny $j$};
\end{tikzpicture}}
\;\; = \;\;
\hackcenter{\begin{tikzpicture}[xscale=-1.0, scale=0.7]
    \draw[thick, ->] (0,0) .. controls (0,.5) and (.75,.5) .. (.75,1.0);
    \draw[thick, ->] (.75,-.5) to (.75,0) .. controls (.75,.5) and (0,.5) .. (0,1.0) to (0,1.5);
    \draw[thick] (0,0) .. controls ++(0,-.4) and ++(0,-.4) .. (-.75,0) to (-.75,1.5);
    %\draw[thick] (.75,0) .. controls ++(0,-1.2) and ++(0,-1.2) .. (-1.5,0) to (-1.55,2);
    \draw[thick, ->] (.75,1.0) .. controls ++(0,.4) and ++(0,.4) .. (1.5,1.0) to (1.5,-.5);
    %\draw[thick, ->] (0,1.0) .. controls ++(0,1.2) and ++(0,1.2) .. (2.25,1.0) to (2.25,-1);
    \node at (-1.1,.55) {  $\lambda$};
    \node at (1.75,-.2) {\tiny $i$};
    \node at (1,-.2) {\tiny $j$};
    \node at (-.9,1.2) {\tiny $i$};
    \node at (.25,1.2) {\tiny $j$};
\end{tikzpicture}}
\end{equation}

\item The $\cal{E}$'s (respectively $\cal{F}$'s) carry an action of the KLR algebra for a fixed choice of parameters $Q$.
The KLR algebra $R$ associated to a fixed set of parameters $Q$ is defined by finite $\Bbbk$-linear combinations of braid--like diagrams in the plane, where each strand is labeled by a vertex $i \in I$.  Strands can intersect and can carry dots, but triple intersections are not allowed.  Diagrams are considered up to planar isotopy that do not change the combinatorial type of the diagram. We recall the local relations.

\begin{enumerate}[i)]

\item The quadratic KLR relations are
\begin{equation}
\hackcenter{
\begin{tikzpicture}[scale=0.8]
    \draw[thick, ->] (0,0) .. controls ++(0,.5) and ++(0,-.4) .. (.75,.8) .. controls ++(0,.4) and ++(0,-.5) .. (0,1.6);
    \draw[thick, ->] (.75,0) .. controls ++(0,.5) and ++(0,-.4) .. (0,.8) .. controls ++(0,.4) and ++(0,-.5) .. (.75,1.6);
    \node at (1.1,1.25) { $\lambda$};
    \node at (-.2,.1) {\tiny $i$};
    \node at (.95,.1) {\tiny $j$};
\end{tikzpicture}}
 \qquad = \qquad
 \left\{
 \begin{array}{ccc}
     t_{ij}\;
     \hackcenter{
\begin{tikzpicture}[scale=0.8]
    \draw[thick, ->] (0,0) to (0,1.6);
    \draw[thick, ->] (.75,0) to (.75,1.6);
    \node at (1.1,1.25) { $\lambda$};
    \node at (-.2,.1) {\tiny $i$};
    \node at (.95,.1) {\tiny $j$};
\end{tikzpicture}}&  &  \text{if $(\alpha_i,\alpha_j)=0$ or $(\alpha_i,\alpha_j)=2$,}\\ \\
% t_{ij} \vcenter{\xy 0;/r.17pc/:
%  (3,9);(3,-9) **\dir{-}?(0)*\dir{<}+(2.3,0)*{};
%  (-3,9);(-3,-9) **\dir{-}?(0)*\dir{<}+(2.3,0)*{};
%  %(8,2)*{\lambda};
%  (-3,4)*{\bullet};(-6.5,5)*{};
%  (-5,-6)*{\scs i};     (5.1,-6)*{\scs j};
% \endxy} \;\; + \;\; t_{ji}
%  \vcenter{\xy 0;/r.17pc/:
%  (3,9);(3,-9) **\dir{-}?(0)*\dir{<}+(2.3,0)*{};
%  (-3,9);(-3,-9) **\dir{-}?(0)*\dir{<}+(2.3,0)*{};
%  %(12,2)*{\lambda};
%  (3,4)*{\bullet};(7,5)*{};
%  (-5,-6)*{\scs i};     (5.1,-6)*{\scs j};
% \endxy}
% %%%%%
  t_{ij}
  \;      \hackcenter{
\begin{tikzpicture}[scale=0.8]
    \draw[thick, ->] (0,0) to (0,1.6);
    \draw[thick, ->] (.75,0) to (.75,1.6);
    \node at (1.1,1.25) { $\lambda$}; \filldraw  (0,.8) circle (2.75pt);
    \node at (-.2,.1) {\tiny $i$};
    \node at (.95,.1) {\tiny $j$};
\end{tikzpicture}}
  \;\; + \;\; t_{ji} \;
 \hackcenter{
\begin{tikzpicture}[scale=0.8]
    \draw[thick, ->] (0,0) to (0,1.6);
    \draw[thick, ->] (.75,0) to (.75,1.6);
    \node at (1.1,1.25) { $\lambda$}; \filldraw  (.75,.8) circle (2.75pt);
    \node at (-.2,.1) {\tiny $i$};
    \node at (.95,.1) {\tiny $j$};
\end{tikzpicture}}
 %%%%
   &  & \text{if $(\alpha_i,\alpha_j)=-1$.}
 \end{array}
 \right. \label{eq_r2_ij-gen-cyc}
\end{equation}

\item The dot sliding relations are
\begin{align}
\hackcenter{\begin{tikzpicture}[scale=0.8]
    \draw[thick, ->] (0,0) .. controls ++(0,.55) and ++(0,-.5) .. (.75,1)
        node[pos=.25, shape=coordinate](DOT){};
    \draw[thick, ->] (.75,0) .. controls ++(0,.5) and ++(0,-.5) .. (0,1);
    \filldraw  (DOT) circle (2.5pt);
    \node at (-.2,.15) {\tiny $i$};
    \node at (.95,.15) {\tiny $j$};
\end{tikzpicture}}
\;\; - \;\;
\hackcenter{\begin{tikzpicture}[scale=0.8]
    \draw[thick, ->] (0,0) .. controls ++(0,.55) and ++(0,-.5) .. (.75,1)
        node[pos=.75, shape=coordinate](DOT){};
    \draw[thick, ->] (.75,0) .. controls ++(0,.5) and ++(0,-.5) .. (0,1);
    \filldraw  (DOT) circle (2.5pt);
    \node at (-.2,.15) {\tiny $i$};
    \node at (.95,.15) {\tiny $j$};
\end{tikzpicture}}
\;\; = \;\;
\hackcenter{\begin{tikzpicture}[scale=0.8]
    \draw[thick, ->] (0,0) .. controls ++(0,.55) and ++(0,-.5) .. (.75,1);
    \draw[thick, ->] (.75,0) .. controls ++(0,.5) and ++(0,-.5) .. (0,1) node[pos=.75, shape=coordinate](DOT){};
    \filldraw  (DOT) circle (2.5pt);
    \node at (-.2,.15) {\tiny $i$};
    \node at (.95,.15) {\tiny $j$};
\end{tikzpicture}}
\;\; - \;\;
\hackcenter{\begin{tikzpicture}[scale=0.8]
    \draw[thick, ->] (0,0) .. controls ++(0,.55) and ++(0,-.5) .. (.75,1);
    \draw[thick, ->] (.75,0) .. controls ++(0,.5) and ++(0,-.5) .. (0,1) node[pos=.25, shape=coordinate](DOT){};
    \filldraw  (DOT) circle (2.5pt);
    \node at (-.2,.15) {\tiny $i$};
    \node at (.95,.15) {\tiny $j$};
\end{tikzpicture}}
 \;\; = \;\;
 \delta_{i,j}
\hackcenter{\begin{tikzpicture}[scale=0.8]
    \draw[thick, ->] (0,0) to  (0,1);
    \draw[thick, ->] (.75,0)to (.75,1) ;
    \node at (-.2,.15) {\tiny $i$};
    \node at (.95,.15) {\tiny $i$};
\end{tikzpicture}}.
\end{align}

\item The cubic KLR relations are
\begin{equation}
\hackcenter{\begin{tikzpicture}[scale=0.8]
    \draw[thick, ->] (0,0) .. controls ++(0,1) and ++(0,-1) .. (1.5,2);
    \draw[thick, ] (.75,0) .. controls ++(0,.5) and ++(0,-.5) .. (0,1);
    \draw[thick, ->] (0,1) .. controls ++(0,.5) and ++(0,-.5) .. (0.75,2);
    \draw[thick, ->] (1.5,0) .. controls ++(0,1) and ++(0,-1) .. (0,2);
    \node at (-.2,.15) {\tiny $i$};
    \node at (.95,.15) {\tiny $j$};
    \node at (1.75,.15) {\tiny $k$};
\end{tikzpicture}}
\;\;- \;\;
\hackcenter{\begin{tikzpicture}[scale=0.8]
    \draw[thick, ->] (0,0) .. controls ++(0,1) and ++(0,-1) .. (1.5,2);
    \draw[thick, ] (.75,0) .. controls ++(0,.5) and ++(0,-.5) .. (1.5,1);
    \draw[thick, ->] (1.5,1) .. controls ++(0,.5) and ++(0,-.5) .. (0.75,2);
    \draw[thick, ->] (1.5,0) .. controls ++(0,1) and ++(0,-1) .. (0,2);
    \node at (-.2,.15) {\tiny $i$};
    \node at (.95,.15) {\tiny $j$};
    \node at (1.75,.15) {\tiny $k$};
\end{tikzpicture}}
\;\; = \;\;   -(\alpha_i,\alpha_j) \; \delta_{i,k} \; t_{ij}
\hackcenter{\begin{tikzpicture}[scale=0.8]
    \draw[thick, ->] (0,0) to (0,2);
    \draw[thick, -> ] (.75,0) to (0.75,2);
    \draw[thick, ->] (1.5,0) to (1.5,2);
    \node at (-.2,.15) {\tiny $i$};
    \node at (.95,.15) {\tiny $j$};
    \node at (1.75,.15) {\tiny $i$};
\end{tikzpicture}}.
\end{equation}
\end{enumerate}

\item When $i \ne j$ one has the mixed relations  relating $\cal{E}_i \cal{F}_j$ and $\cal{F}_j \cal{E}_i$
\begin{equation}  \label{mixed_rel-cyc}
 \hackcenter{\begin{tikzpicture}[scale=0.8]
    \draw[thick,<-] (0,0) .. controls ++(0,.5) and ++(0,-.5) .. (.75,1);
    \draw[thick] (.75,0) .. controls ++(0,.5) and ++(0,-.5) .. (0,1);
    \draw[thick, ->] (0,1 ) .. controls ++(0,.5) and ++(0,-.5) .. (.75,2);
    \draw[thick] (.75,1) .. controls ++(0,.5) and ++(0,-.5) .. (0,2);
        \node at (-.2,.15) {\tiny $i$};
    \node at (.95,.15) {\tiny $j$};
\end{tikzpicture}}
\;\; = \;\; t_{ij}
\hackcenter{\begin{tikzpicture}[scale=0.8]
    \draw[thick, <-] (0,0) -- (0,2);
    \draw[thick, ->] (.75,0) -- (.75,2);
     \node at (-.2,.2) {\tiny $i$};
    \node at (.95,.2) {\tiny $j$};
\end{tikzpicture}}
\qquad \qquad
 \hackcenter{\begin{tikzpicture}[scale=0.8]
    \draw[thick] (0,0) .. controls ++(0,.5) and ++(0,-.5) .. (.75,1);
    \draw[thick, <-] (.75,0) .. controls ++(0,.5) and ++(0,-.5) .. (0,1);
    \draw[thick] (0,1 ) .. controls ++(0,.5) and ++(0,-.5) .. (.75,2);
    \draw[thick, ->] (.75,1) .. controls ++(0,.5) and ++(0,-.5) .. (0,2);
        \node at (-.2,.15) {\tiny $i$};
    \node at (.95,.15) {\tiny $j$};
\end{tikzpicture}}
\;\; = \;\; t_{ji}
\hackcenter{\begin{tikzpicture}[scale=0.8]
    \draw[thick, ->] (0,0) -- (0,2);
    \draw[thick, <-] (.75,0) -- (.75,2);
     \node at (-.2,.2) {\tiny $i$};
    \node at (.95,.2) {\tiny $j$};
\end{tikzpicture}} .
\end{equation}

\item Negative degree bubbles are zero.  That is for all $m \in \Z_{>0}$ one has
\begin{equation}
 \hackcenter{ \begin{tikzpicture} [scale=.8]
 \draw (-.15,.35) node { $\scs i$};
 \draw[ ]  (0,0) arc (180:360:0.5cm) [thick];
 \draw[<- ](1,0) arc (0:180:0.5cm) [thick];
\filldraw  [black] (.1,-.25) circle (2.5pt);
 \node at (-.2,-.5) {\tiny $m$};
 \node at (1.15,.8) { $\lambda  $};
\end{tikzpicture} } \;  = 0\quad \text{if $m < \l_i -1$}, \qquad \quad
\;
\hackcenter{ \begin{tikzpicture} [scale=.8]
 \draw (-.15,.35) node { $\scs i$};
 \draw  (0,0) arc (180:360:0.5cm) [thick];
 \draw[->](1,0) arc (0:180:0.5cm) [thick];
\filldraw  [black] (.9,-.25) circle (2.5pt);
 \node at (1,-.5) {\tiny $m$};
 \node at (1.15,.8) { $\lambda $};
\end{tikzpicture} } \;  = 0 \quad  \text{if $m < -\l_i -1$}.
\end{equation}
Furthermore, dotted bubbles of degree zero are scalar multiples of the identity $2$-morphisms
\begin{equation} \label{eq:degreezero}
 \hackcenter{ \begin{tikzpicture} [scale=.8]
 \draw (-.15,.35) node { $\scs i$};
 \draw  (0,0) arc (180:360:0.5cm) [thick];
 \draw[,<-](1,0) arc (0:180:0.5cm) [thick];
\filldraw  [black] (.1,-.25) circle (2.5pt);
 \node at (-.5,-.5) {\tiny $\l_i -1$};
 \node at (1.15,1) { $\lambda  $};
\end{tikzpicture} }
\;\; =
\Id_{\1_{\l}}
\quad \text{for $  \l_i \geq 1$}, \qquad \quad
\;
\hackcenter{ \begin{tikzpicture} [scale=.8]
 \draw (-.15,.35) node { $\scs i$};
 \draw  (0,0) arc (180:360:0.5cm) [thick];
 \draw[->](1,0) arc (0:180:0.5cm) [thick];
\filldraw  [black] (.9,-.25) circle (2.5pt);
 \node at (1.35,-.5) {\tiny $-\l_i -1$};
 \node at (1.15,1) { $\lambda $};
\end{tikzpicture} }
\;\; =
\Id_{\1_{\l}} \quad \text{if $   \l_i \leq -1$}.
\end{equation}
 We introduce formal symbols called \emph{fake bubbles}.  These are positive degree endomorphisms of $\onel$ that carry a formal label by a negative number of dots.

\begin{itemize}
  \item Degree zero fake bubbles are normalized by
  \begin{equation}
 \hackcenter{ \begin{tikzpicture} [scale=.8]
 \draw (-.15,.35) node { $\scs i$};
 \draw  (0,0) arc (180:360:0.5cm) [thick];
 \draw[,<-](1,0) arc (0:180:0.5cm) [thick];
\filldraw  [black] (.1,-.25) circle (2.5pt);
 \node at (-.5,-.55) {\tiny $\l_i -1$};
 \node at (1.15,1) { $\lambda  $};
\end{tikzpicture} } \;\; =  \Id_{\1_{\l}}
\quad \text{for $  \l_i < 1$}, \qquad \quad
\;
\hackcenter{ \begin{tikzpicture} [scale=.8]
 \draw (-.15,.35) node { $\scs i$};
 \draw  (0,0) arc (180:360:0.5cm) [thick];
 \draw[->](1,0) arc (0:180:0.5cm) [thick];
\filldraw  [black] (.9,-.25) circle (2.5pt);
 \node at (1.35,-.5) {\tiny $-\l_i -1$};
 \node at (1.15,1) { $\lambda $};
\end{tikzpicture} } \;\; =
\Id_{\1_{\l}} \quad \text{if $   \l_i > -1$}.
\end{equation}

\item Higher degree fake bubbles for $\l_i <0$ are defined inductively as
\begin{equation}
  \hackcenter{ \begin{tikzpicture} [scale=.8]
 \draw (-.15,.35) node { $\scs i$};
 \draw  (0,0) arc (180:360:0.5cm) [thick];
 \draw[,<-](1,0) arc (0:180:0.5cm) [thick];
\filldraw  [black] (.1,-.25) circle (2.5pt);
 \node at (-.65,-.55) {\tiny $\l_i -1+j$};
 \node at (1.15,1) { $\lambda  $};
\end{tikzpicture} } \;\; = \;\;
\left\{
  \begin{array}{ll}
    -
\displaystyle \sum_{\stackrel{\scs x+y=j}{\scs y\geq 1}} \hackcenter{ \begin{tikzpicture}[scale=.8]
 \draw (-.15,.35) node { $\scs i$};
 \draw  (0,0) arc (180:360:0.5cm) [thick];
 \draw[,<-](1,0) arc (0:180:0.5cm) [thick];
\filldraw  [black] (.1,-.25) circle (2.5pt);
 \node at (-.35,-.45) {\tiny $\overset{\l_i-1}{+x}$};
 \node at (.85,1) { $\lambda$};
\end{tikzpicture}  \;\;
\begin{tikzpicture}[scale=.8]
 \draw (-.15,.35) node { $\scs i$};
 \draw  (0,0) arc (180:360:0.5cm) [thick];
 \draw[->](1,0) arc (0:180:0.5cm) [thick];
\filldraw  [black] (.9,-.25) circle (2.5pt);
 \node at (1.45,-.5) {\tiny $\overset{-\l_i-1}{+y}$};
 \node at (1.15,1.1) { $\;$};
\end{tikzpicture}  }
  & \hbox{if $0 < j < -\l_i+1$;} \\
    0, & \hbox{if $j<0$.}
  \end{array}
\right.
\end{equation}

\item Higher degree fake bubbles for $\l_i >0$ are defined inductively by
\begin{equation}
\hackcenter{ \begin{tikzpicture} [scale=.8]
 \draw (-.15,.35) node { $\scs i$};
 \draw  (0,0) arc (180:360:0.5cm) [thick];
 \draw[->](1,0) arc (0:180:0.5cm) [thick];
\filldraw  [black] (.9,-.25) circle (2.5pt);
 \node at (1.5,-.5) {\tiny $-\l_i -1+j$};
 \node at (1.15,1) { $\lambda $};
\end{tikzpicture} } \;\; = \;\;
\left\{
  \begin{array}{ll}
    -
\displaystyle\sum_{\stackrel{\scs x+y=j}{\scs x\geq 1}} \hackcenter{ \begin{tikzpicture}[scale=.8]
 \draw (-.15,.35) node { $\scs i$};
 \draw  (0,0) arc (180:360:0.5cm) [thick];
 \draw[,<-](1,0) arc (0:180:0.5cm) [thick];
\filldraw  [black] (.1,-.25) circle (2.5pt);
 \node at (-.35,-.45) {\tiny $\overset{\l_i-1}{+x}$};
 \node at (.85,1) { $\lambda$};
\end{tikzpicture}  \;\;
\begin{tikzpicture}[scale=.8]
 \draw (.3,.125) node {};
 \draw  (0,0) arc (180:360:0.5cm) [thick];
 \draw[->](1,0) arc (0:180:0.5cm) [thick];
\filldraw  [black] (.9,-.25) circle (2.5pt);
 \node at (1.45,-.5) {\tiny $\overset{-\l_i-1}{+y}$};
 \node at (1.15,1.1) { $\;$};
\end{tikzpicture}  }
  & \hbox{if $0 < j < \l_i+1$;} \\
    0, & \hbox{if $j<0$.}
  \end{array}
\right.
\end{equation}
\end{itemize}
The above relations are sometimes referred to as the \textit{infinite Grassmannian relations}.

\item The $\mf{sl}_2$ relations (which we also refer to as the $\sE \sF$ and $\sF \sE$ decompositions) are:
\begin{equation}
\begin{split}
 \hackcenter{\begin{tikzpicture}[scale=0.8]
    \draw[thick] (0,0) .. controls ++(0,.5) and ++(0,-.5) .. (.75,1);
    \draw[thick,<-] (.75,0) .. controls ++(0,.5) and ++(0,-.5) .. (0,1);
    \draw[thick] (0,1 ) .. controls ++(0,.5) and ++(0,-.5) .. (.75,2);
    \draw[thick, ->] (.75,1) .. controls ++(0,.5) and ++(0,-.5) .. (0,2);
        \node at (-.2,.15) {\tiny $i$};
    \node at (.95,.15) {\tiny $i$};
     \node at (1.1,1.44) { $\lambda $};
\end{tikzpicture}}
\;\; + \;\;
\hackcenter{\begin{tikzpicture}[scale=0.8]
    \draw[thick, ->] (0,0) -- (0,2);
    \draw[thick, <-] (.75,0) -- (.75,2);
     \node at (-.2,.2) {\tiny $i$};
    \node at (.95,.2) {\tiny $i$};
     \node at (1.1,1.44) { $\lambda $};
\end{tikzpicture}}
\;\; = \;\;
\sum_{\overset{f_1+f_2+f_3}{=\l_i-1}}\hackcenter{
 \begin{tikzpicture}[scale=0.8]
 \draw[thick,->] (0,-1.0) .. controls ++(0,.5) and ++ (0,.5) .. (.8,-1.0) node[pos=.75, shape=coordinate](DOT1){};
  \draw[thick,<-] (0,1.0) .. controls ++(0,-.5) and ++ (0,-.5) .. (.8,1.0) node[pos=.75, shape=coordinate](DOT3){};
 \draw[thick,->] (0,0) .. controls ++(0,-.45) and ++ (0,-.45) .. (.8,0)node[pos=.25, shape=coordinate](DOT2){};
 \draw[thick] (0,0) .. controls ++(0,.45) and ++ (0,.45) .. (.8,0);
 \draw (-.15,.7) node { $\scs i$};
\draw (1.05,0) node { $\scs i$};
\draw (-.15,-.7) node { $\scs i$};
% \draw (.3,.125) node {};
% \draw  (0,0) arc (180:360:0.5cm) [thick];
% \draw[,<-](1,0) arc (0:180:0.5cm) [thick];
%\filldraw  [black] (.1,-.25) circle (2.5pt);
 \node at (.95,.65) {\tiny $f_3$};
 \node at (-.55,-.05) {\tiny $\overset{-\l_i-1}{+f_2}$};
  \node at (.95,-.65) {\tiny $f_1$};
 \node at (1.65,.3) { $\lambda $};
 \filldraw[thick]  (DOT3) circle (2.5pt);
  \filldraw[thick]  (DOT2) circle (2.5pt);
  \filldraw[thick]  (DOT1) circle (2.5pt);
\end{tikzpicture} }
\\
 \hackcenter{\begin{tikzpicture}[scale=0.8]
    \draw[thick,<-] (0,0) .. controls ++(0,.5) and ++(0,-.5) .. (.75,1);
    \draw[thick] (.75,0) .. controls ++(0,.5) and ++(0,-.5) .. (0,1);
    \draw[thick, ->] (0,1 ) .. controls ++(0,.5) and ++(0,-.5) .. (.75,2);
    \draw[thick] (.75,1) .. controls ++(0,.5) and ++(0,-.5) .. (0,2);
        \node at (-.2,.15) {\tiny $i$};
    \node at (.95,.15) {\tiny $i$};
     \node at (1.1,1.44) { $\lambda $};
\end{tikzpicture}}
\;\; + \;\;
\hackcenter{\begin{tikzpicture}[scale=0.8]
    \draw[thick, <-] (0,0) -- (0,2);
    \draw[thick, ->] (.75,0) -- (.75,2);
     \node at (-.2,.2) {\tiny $i$};
    \node at (.95,.2) {\tiny $i$};
     \node at (1.1,1.44) { $\lambda $};
\end{tikzpicture}}
\;\; = \;\;
\sum_{\overset{f_1+f_2+f_3}{=-\l_i-1}}\hackcenter{
 \begin{tikzpicture}[scale=0.8]
 \draw[thick,<-] (0,-1.0) .. controls ++(0,.5) and ++ (0,.5) .. (.8,-1.0) node[pos=.75, shape=coordinate](DOT1){};
  \draw[thick,->] (0,1.0) .. controls ++(0,-.5) and ++ (0,-.5) .. (.8,1.0) node[pos=.75, shape=coordinate](DOT3){};
 \draw[thick ] (0,0) .. controls ++(0,-.45) and ++ (0,-.45) .. (.8,0)node[pos=.25, shape=coordinate](DOT2){};
 \draw[thick, ->] (0,0) .. controls ++(0,.45) and ++ (0,.45) .. (.8,0);
 \draw (-.15,.7) node { $\scs i$};
\draw (1.05,0) node { $\scs i$};
\draw (-.15,-.7) node { $\scs i$};
% \draw (.3,.125) node {};
% \draw  (0,0) arc (180:360:0.5cm) [thick];
% \draw[,<-](1,0) arc (0:180:0.5cm) [thick];
%\filldraw  [black] (.1,-.25) circle (2.5pt);
 \node at (.95,.65) {\tiny $f_3$};
 \node at (-.55,-.05) {\tiny $\overset{\l_i-1}{+f_2}$};
  \node at (.95,-.65) {\tiny $f_1$};
 \node at (1.65,.3) { $\lambda $};
 \filldraw[thick]  (DOT3) circle (2.5pt);
  \filldraw[thick]  (DOT2) circle (2.5pt);
  \filldraw[thick]  (DOT1) circle (2.5pt);
\end{tikzpicture} } . \label{eq:sl2}
\end{split}
\end{equation}
\end{enumerate}
\end{definition}

It is sometimes convenient to use a shorthand notation for the bubbles that emphasizes their degrees.
\begin{equation}
\label{starnotation}
\hackcenter{ \begin{tikzpicture} [scale=.8]
    % Bubble
 \draw  (-.75,1) arc (360:180:.45cm) [thick];
 \draw[<-](-.75,1) arc (0:180:.45cm) [thick];
     \filldraw  [black] (-1.55,.75) circle (2.5pt);
        \node at (-1.3,.3) { $\scriptstyle \ast + r$};
        \node at (-1.4,1.7) { $i $};
 %% L
 \node at (-.2,1.5) { $\lambda $};
\end{tikzpicture}}
\;\; := \;\;
\hackcenter{ \begin{tikzpicture} [scale=.8]
    % Bubble
 \draw  (-.75,1) arc (360:180:.45cm) [thick];
 \draw[<-](-.75,1) arc (0:180:.45cm) [thick];
     \filldraw  [black] (-1.55,.75) circle (2.5pt);
        \node at (-1.3,.3) { $\scriptstyle \l_i -1 + r$};
        \node at (-1.4,1.7) { $i $};
 %% L
 \node at (-.2,1.5) { $\lambda $};
\end{tikzpicture}}
\qquad
\quad
\hackcenter{ \begin{tikzpicture} [scale=.8]
    % Bubble
 \draw  (-.75,1) arc (360:180:.45cm) [thick];
 \draw[->](-.75,1) arc (0:180:.45cm) [thick];
     \filldraw  [black] (-1.55,.75) circle (2.5pt);
        \node at (-1.3,.3) { $\scriptstyle \ast + r$};
        \node at (-1.4,1.7) { $i $};
 %% L
 \node at (-.2,1.5) { $\lambda $};
\end{tikzpicture}}
\;\; := \;\;
\hackcenter{ \begin{tikzpicture} [scale=.8]
    % Bubble
 \draw  (-.75,1) arc (360:180:.45cm) [thick];
 \draw[->](-.75,1) arc (0:180:.45cm) [thick];
     \filldraw  [black] (-1.55,.75) circle (2.5pt);
        \node at (-1.3,.3) { $\scriptstyle -\l_i -1 + r$};
        \node at (-1.4,1.7) { $i $};
 %% L
 \node at (-.2,1.5) { $\lambda $};
\end{tikzpicture}}
\end{equation}

\begin{definition}
A $2$-representation of $\Ucat_Q(\mf{g})$ is a graded additive $\Bbbk$-linear $2$-functor $\Ucat_Q(\mf{g}) \to \cal{K}$ for some graded, additive $2$-category $\cal{K}$.
\end{definition}

\subsection{Thick calculus}

The idempotent
completion, or Karoubi envelope $\dot{\cal{C}}$, of an additive category
$\cal{C}$ can be viewed as a minimal enlargement of the category $\cal{C}$ so
that idempotents split. The idempotent completion $\dot{\cal{K}}$ of a
$2$-category $\cal{K}$ is the $2$-category with the same objects as $\cal{K}$,
but with $\Hom$ categories given by the usual Karoubi envelope of
$\Hom_{\cal{K}}(A,B)$. Any additive $2$-functor $\cal{K} \to \cal{K}'$ that has
splitting of idempotent $2$-morphisms in $\cal{K}'$ extends uniquely to an
additive $2$-functor $\dot{\cal{K}} \to \cal{K'}$; see \cite[Section 3.4]{KL3}
for more details.

In order to define the Rickard complexes lifting the braid group action on integrable modules, we must first introduce the augmented graphical calculus for $\UcatD_Q$.  This so-called `thick calculus' describes $2$-morphisms in the Karoubi envelope of the $2$-category $\Ucat_Q$.  For more details in the $\mf{sl}_2$ case see \cite{KLMS}. For the simply laced case see \cite{Stosic1,Stosic2}, although care must be taken because all of the formulas in these references are for the unsigned version of the KLR-algebra where all $t_{ij}=1$.  Those formulas can be transferred to our conventions using the rescaling functors from Section 3.3 of \cite{Lau-param}.

In the Karoubi envelope $\UcatD_Q$ we can define divided power $1$-morphisms
 $\cal{E}^{(a)}_i\onel$ and $\cal{F}^{(b)}_i\onel$ by
\begin{equation}
\label{defEa}
 \cal{E}^{(a)}_i\onel\la t\ra := \left(\cal{E}^a_i\onel\left\la t-\frac{a(a-1)}{2}\right\ra,e_a\right) =: \;
\hackcenter{
\begin{tikzpicture} [scale=.75]
\draw[thick,  double, <-](0,1.15) to(0,-1.15);
%\node[draw, fill=white!20 ,rounded corners ] at (0,0) {$ s_{\mu}$};
 \node at (.25,-.75) {$\scs i$};
\node at (0,-1.35) {$\scs a $};
\node at (.5,.25) {$\scs \l $};
\node at (-.8,.25) {$\scs \l+a\alpha_i $};
\end{tikzpicture}}
\end{equation}
\begin{equation*}
 \cal{F}^{(a)}_i\onel\la t\ra :=
  \left(\cal{F}^a_i\onel\left\la t+\frac{a(a-1)}{2}\right\ra,{e}'_a\right) =:
\hackcenter{
\begin{tikzpicture} [scale=.75]
\draw[thick,  double, ->](0,1.15) to(0,-1.15);
%\node[draw, fill=white!20 ,rounded corners ] at (0,0) {$ s_{\mu}$};
 \node at (.25,-.75) {$\scs i$};
\node at (0,-1.35) {$\scs a $};
\node at (.5,.25) {$\scs \l $};
\node at (-.8,.25) {$\scs \l-a\alpha_i $};
\end{tikzpicture}}
\end{equation*}
where the idempotent $e_a$ is defined as follows
\begin{equation*}
 e_a := \delta_aD_a=\xy
(-18,-4)*{}; (9.5,-4)*{} **\dir{-};
(9.5,-4)*{}; (9.5,2)*{} **\dir{-};
(9.5,2)*{}; (-18,2)*{} **\dir{-};
(-18,2)*{}; (-18,-4)*{} **\dir{-};
(-7.5,-4)*{}; (-7.5,-8)*{} **\dir{-};
(-16,-4)*{}; (-16,-8)*{} **\dir{-};
(2.5,-4)*{}; (2.5,-8)*{} **\dir{-};
(7.5,-4)*{}; (7.5,-8)*{} **\dir{-};
{\ar (-16,2)*{}; (-16,8)*{}};
{\ar (-7.5,2)*{}; (-7.5,8)*{}};
{\ar (2.5,2)*{}; (2.5,8)*{}};
{\ar (7.5,2)*{}; (7.5,8)*{}};
(-7.5,5)*{\bullet};
(-16,5)*{\bullet};
(-11.5,6)*{\scriptstyle a-2};
(-20,6)*{\scriptstyle a-1};
(2.5,5)*{\bullet};
(-2.5,-6)*{\dots}; (-2.5,5)*{\dots};
(-4.25,-1)*{D_a};
(-19.5,0)*{};(11,0)*{};
\endxy
\end{equation*}
where all strands are labeled $i$.
Here $D_a$ is the longest braid on $a$-strands. The idempotents
${e}'_a$ are obtained from $e_a$ by a $180^\circ$ rotation.
We have $\cal{E}^a_i\onel \cong \oplus_{[a]!} \cal{E}^{(a)}_i\onel$ and
 $\cal{F}^{(b)}_i\onel \cong \oplus_{[b]!} \cal{F}^{(b)}_i\onel$.
Here we use the standard notation
 $$ [a] := \frac{q^a-q^{-a}}{q-q^{-1}}=q^{a-1}+q^{a-3}+\dots+q^{-a+1},\;\;
[a]!=\prod^a_{i=1} [i],
\quad \text{and}\quad
\left[ \begin{array}{c}a\\b\end{array}\right]=\frac{[a]!}{[b]![a-b]!}\, .$$
We define here some additional $2$-morphisms in $\UcatD_Q$, whose
 degrees can be read from the shift on the right-hand side. In this section we will assume that all strings are coloured by the Dynkin node $i$ that we omit for simplicity unless explicitly indicated in the diagrams.

\allowdisplaybreaks
\begin{minipage}{0.45\textwidth}
\begin{align*}
\hackcenter{ \begin{tikzpicture} [scale=.6]
\draw[thick,  double, <-](0,2).. controls ++(0,-.75) and ++(0,.3) ..(.6,1);
\draw[thick,  double,  <-](1.2,2).. controls ++(0,-.75) and ++(0,.3) ..(.6,1) to (.6,0);;
 \node at (0,2.2) {$\scs a$};
 \node at (1.2,2.2) {$\scs b$};
\node at (.6,-.2) {$\scs a+b$};
\node at (.3,.4) {$\scs i$};
%\node[draw, fill=white!20 ,rounded corners ] at ( .6,.55 ) {$ \varepsilon_{j}$};
\end{tikzpicture}} \; &:= \;  \text{$ \xy 0;/r.20pc/:
(-4,-8);(4,-1)*{} **\crv{(-4,-4.5) & (4,-4.5)};
(4,-8);(-4,-1)*{} **\crv{(4,-4.5) & (-4,-4.5)};
(-1,-1)*{}; (-7,-1)*{} **\dir{-};
(-7,-1)*{}; (-7,5)*{} **\dir{-};
(-7,5)*{}; (-1,5)*{} **\dir{-};
(-1,5)*{}; (-1,-1)*{} **\dir{-};
(1,-1)*{}; (7,-1)*{} **\dir{-};
(7,-1)*{}; (7,5)*{} **\dir{-};
(7,5)*{}; (1,5)*{} **\dir{-};
(1,5)*{}; (1,-1)*{} **\dir{-};
{\ar (-4,5)*{}; (-4,8)*{}};
{\ar (4,5)*{}; (4,8)*{}};
(-4,2)*{e_a};
(4,2)*{e_b};
(-2,-8)*{\scriptstyle b};
(6,-8)*{\scriptstyle a};
 (7,-4)*{\l};
%(-9,0)*{};(9,0)*{};
\endxy$} \maps \scs \cal{E}^{(a+b)}_i\onel\rightarrow\cal{E}^{(a)}_i\cal{E}^{(b)}_i\onel\la -ab \ra,
\\
\hackcenter{ \begin{tikzpicture} [scale=.6]
\draw[thick,  double,](0,2).. controls ++(0,-.75) and ++(0,.3) ..(.6,1);
\draw[thick,  double,  ->](1.2,2).. controls ++(0,-.75) and ++(0,.3) ..(.6,1) to (.6,0);;
 \node at (0,2.2) {$\scs a$};
 \node at (1.2,2.2) {$\scs b$};
\node at (.6,-.2) {$\scs a+b$};
\node at (.3,.7) {$\scs i$};
%\node[draw, fill=white!20 ,rounded corners ] at ( .6,.55 ) {$ \varepsilon_{j}$};
\end{tikzpicture}} \; &:= \; \text{$\xy 0;/r.20pc/:
(-4,8);(-4,3)*{} **\dir{-};
(4,8);(4,3)*{} **\dir{-};
(-6,-3)*{}; (-6,3)*{} **\dir{-};
(-6,3)*{}; (6,3)*{} **\dir{-};
(6,3)*{}; (6,-3)*{} **\dir{-};
(6,-3)*{}; (-6,-3)*{} **\dir{-};
{\ar (0,-3)*{}; (0,-8)*{}};
(0,0)*{{e}'_{a+b}};
(-2,8)*{\scriptstyle a};
(6,8)*{\scriptstyle b};
 (7,-6)*{\l};
%(-8,0)*{};(8,0)*{};
\endxy$} \maps \scs \cal{F}^{(a+b)}_i\onel\rightarrow\cal{F}^{(a)}_i\cal{F}^{(b)}_i\onel\la -ab \ra,
\\
\hackcenter{ \begin{tikzpicture} [scale=.6 ]
\draw[thick,  double, ->](0,0).. controls ++(0,.75) and ++(0,.76) ..(1,0);
 \node at (0,-.2) {$\scs a$};\node at (1.0,-.2) {$\scs a$};
\node at (-.1,.5) {$\scs i$};
\node at (1.2,.75) {$\scs \l$};
%\node[draw, fill=white!20 ,rounded corners ] at ( .6,.55 ) {$ \varepsilon_{j}$};
\end{tikzpicture}}\; &:= \;  \text{$\xy 0;/r.20pc/:
(-1,3)*{}; (-7,3)*{} **\dir{-};
(-7,3)*{}; (-7,-3)*{} **\dir{-};
(-7,-3)*{}; (-1,-3)*{} **\dir{-};
(-1,-3)*{}; (-1,3)*{} **\dir{-};
(1,3)*{}; (7,3)*{} **\dir{-};
(7,3)*{}; (7,-3)*{} **\dir{-};
(7,-3)*{}; (1,-3)*{} **\dir{-};
(1,-3)*{}; (1,3)*{} **\dir{-};
(-4,-8)*{}; (-4,-3)*{} **\dir{-};
(4,-3)*{}; (4,-8)*{} **\dir{-}?(1)*\dir{>};
 (-4,3)*{};(4,3)*{} **\crv{(-4,9) & (4,9)} ?(.55)*\dir{>};
 (-2,-8)*{\scriptstyle a};
(-4,0)*{e_a};
(4,0)*{{e}'_a};
 (7,6)*{\l};
 %(-8,0)*{};(9,0)*{};
\endxy$} \maps \scs\cal{E}^{(a)}_i\cal{F}^{(a)}_i\onel\rightarrow\onel\la a^2-a\l_i \ra,
\\
\hackcenter{ \begin{tikzpicture} [scale=.6 ]
\draw[thick,  double, <-](0,0).. controls ++(0,.75) and ++(0,.76) ..(1,0);
 \node at (0,-.2) {$\scs a$};\node at (1.0,-.2) {$\scs a$};
\node at (-.1,.5) {$\scs i$};
\node at (1.2,.75) {$\scs \l$};
%\node[draw, fill=white!20 ,rounded corners ] at ( .6,.55 ) {$ \varepsilon_{j}$};
\end{tikzpicture}} \; & := \; \text{$\xy 0;/r.20pc/:
(-1,3)*{}; (-7,3)*{} **\dir{-};
(-7,3)*{}; (-7,-3)*{} **\dir{-};
(-7,-3)*{}; (-1,-3)*{} **\dir{-};
(-1,-3)*{}; (-1,3)*{} **\dir{-};
(1,3)*{}; (7,3)*{} **\dir{-};
(7,3)*{}; (7,-3)*{} **\dir{-};
(7,-3)*{}; (1,-3)*{} **\dir{-};
(1,-3)*{}; (1,3)*{} **\dir{-};
(-4,-3)*{}; (-4,-8)*{} **\dir{-}?(1)*\dir{>};
(4,-3)*{}; (4,-8)*{} **\dir{-};
 (-4,3)*{};(4,3)*{} **\crv{(-4,9) & (4,9)} ?(.45)*\dir{<};
 (6,-8)*{\scriptstyle a};
(-4,0)*{{e}'_a};
(4,0)*{e_a};
 (7,6)*{\l};
 %(-8,0)*{};(9,0)*{};
\endxy$} \maps \scs\cal{F}^{(a)}_i\cal{E}^{(a)}_i\onel\rightarrow\onel\la a^2+a\l_i\ra,
\end{align*}
\end{minipage}
\hfill
\begin{minipage}{0.45\textwidth}
 \begin{align*}
\hackcenter{ \begin{tikzpicture} [scale=.6, rotate=180]
\draw[thick,  double,](0,2).. controls ++(0,-.75) and ++(0,.3) ..(.6,1);
\draw[thick,  double,  ->](1.2,2).. controls ++(0,-.75) and ++(0,.3) ..(.6,1) to (.6,0);;
 \node at (0,2.2) {$\scs b$};
 \node at (1.2,2.2) {$\scs a$};
\node at (.6,-.2) {$\scs a+b$};
\node at (.3,.7) {$\scs i$};
%\node[draw, fill=white!20 ,rounded corners ] at ( .6,.55 ) {$ \varepsilon_{j}$};
\end{tikzpicture}} \; &:= \; \text{$\xy 0;/r.20pc/:
(-4,-8);(-4,-3)*{} **\dir{-};
(4,-8);(4,-3)*{} **\dir{-};
(-6,-3)*{}; (-6,3)*{} **\dir{-};
(-6,3)*{}; (6,3)*{} **\dir{-};
(6,3)*{}; (6,-3)*{} **\dir{-};
(6,-3)*{}; (-6,-3)*{} **\dir{-};
{\ar (0,3)*{}; (0,8)*{}};
(0,0)*{e_{a+b}};
(-2,-8)*{\scriptstyle a};
(6,-8)*{\scriptstyle b};
 (7,6)*{\l};
%(-8,0)*{};(8,0)*{};
\endxy$} \maps \scs \cal{E}^{(a)}_i\cal{E}^{(b)}_i\onel\rightarrow\cal{E}^{(a+b)}_i\onel\la -ab\ra,
\\
\hackcenter{ \begin{tikzpicture} [scale=.6, rotate=180]
\draw[thick,  double, <-](0,2).. controls ++(0,-.75) and ++(0,.3) ..(.6,1);
\draw[thick,  double,  <-](1.2,2).. controls ++(0,-.75) and ++(0,.3) ..(.6,1) to (.6,0);;
 \node at (0,2.2) {$\scs b$};
 \node at (1.2,2.2) {$\scs a$};
\node at (.6,-.2) {$\scs a+b$};
\node at (.3,.4) {$\scs i$};
%\node[draw, fill=white!20 ,rounded corners ] at ( .6,.55 ) {$ \varepsilon_{j}$};
\end{tikzpicture}} \; &:= \;
\text{$\xy 0;/r.20pc/:
(-4,8);(4,1)*{} **\crv{(-4,4.5) & (4,4.5)};
(4,8);(-4,1)*{} **\crv{(4,4.5) & (-4,4.5)};
(-1,1)*{}; (-7,1)*{} **\dir{-};
(-7,1)*{}; (-7,-5)*{} **\dir{-};
(-7,-5)*{}; (-1,-5)*{} **\dir{-};
(-1,-5)*{}; (-1,1)*{} **\dir{-};
(1,1)*{}; (7,1)*{} **\dir{-};
(7,1)*{}; (7,-5)*{} **\dir{-};
(7,-5)*{}; (1,-5)*{} **\dir{-};
(1,-5)*{}; (1,1)*{} **\dir{-};
{\ar (-4,-5)*{}; (-4,-8)*{}};
{\ar (4,-5)*{}; (4,-8)*{}};
(-4,-2)*{{e}'_a};
(4,-2)*{{e}'_b};
(-2,8)*{\scriptstyle b};
(6,8)*{\scriptstyle a};
 (7,4)*{\l};
%(-9,0)*{};(9,0)*{};
\endxy$} \maps \scs \cal{F}^{(a)}_i\cal{F}^{(b)}_i\onel\rightarrow\cal{F}^{(a+b)}_i\onel\la -ab \ra,
\\
\hackcenter{ \begin{tikzpicture} [scale=.6 , rotate=180]
\draw[thick,  double, ->](0,0).. controls ++(0,.75) and ++(0,.76) ..(1,0);
 \node at (0,-.2) {$\scs a$};\node at (1.0,-.2) {$\scs a$};
\node at (-.1,.5) {$\scs i$};
\node at (1.2,.75) {$\scs \l$};
%\node[draw, fill=white!20 ,rounded corners ] at ( .6,.55 ) {$ \varepsilon_{j}$};
\end{tikzpicture}} \; &:= \;
\text{$\xy 0;/r.20pc/:
(-1,3)*{}; (-7,3)*{} **\dir{-};
(-7,3)*{}; (-7,-3)*{} **\dir{-};
(-7,-3)*{}; (-1,-3)*{} **\dir{-};
(-1,-3)*{}; (-1,3)*{} **\dir{-};
(1,3)*{}; (7,3)*{} **\dir{-};
(7,3)*{}; (7,-3)*{} **\dir{-};
(7,-3)*{}; (1,-3)*{} **\dir{-};
(1,-3)*{}; (1,3)*{} **\dir{-};
(-4,3)*{}; (-4,8)*{} **\dir{-}?(1)*\dir{>};
(4,3)*{}; (4,8)*{} **\dir{-};
 (-4,-3)*{};(4,-3)*{} **\crv{(-4,-9) & (4,-9)} ?(.45)*\dir{<};
 (6,8)*{\scriptstyle a};
(-4,0)*{e_a};
(4,0)*{{e}'_a};
 (7,-6)*{\l};
 %(-8,0)*{};(9,0)*{};
\endxy$} \maps \scs \onel\rightarrow\cal{E}^{(a)}_i\cal{F}^{(a)}_i\onel\la a^2-a\l_i \ra,
\\
\hackcenter{ \begin{tikzpicture} [scale=.6 , rotate=180]
\draw[thick,  double, <-](0,0).. controls ++(0,.75) and ++(0,.76) ..(1,0);
 \node at (0,-.2) {$\scs a$};\node at (1.0,-.2) {$\scs a$};
\node at (-.1,.5) {$\scs i$};
\node at (1.2,.75) {$\scs \l$};
%\node[draw, fill=white!20 ,rounded corners ] at ( .6,.55 ) {$ \varepsilon_{j}$};
\end{tikzpicture}} \; &:= \;
\text{$\xy 0;/r.20pc/:
(-1,3)*{}; (-7,3)*{} **\dir{-};
(-7,3)*{}; (-7,-3)*{} **\dir{-};
(-7,-3)*{}; (-1,-3)*{} **\dir{-};
(-1,-3)*{}; (-1,3)*{} **\dir{-};
(1,3)*{}; (7,3)*{} **\dir{-};
(7,3)*{}; (7,-3)*{} **\dir{-};
(7,-3)*{}; (1,-3)*{} **\dir{-};
(1,-3)*{}; (1,3)*{} **\dir{-};
(-4,8)*{}; (-4,3)*{} **\dir{-};
(4,3)*{}; (4,8)*{} **\dir{-}?(1)*\dir{>};
 (-4,-3)*{};(4,-3)*{} **\crv{(-4,-9) & (4,-9)} ?(.55)*\dir{>};
 (-2,8)*{\scriptstyle a};
(-4,0)*{{e}'_a};
(4,0)*{e_a};
 (7,-6)*{\lambda};
 %(-8,0)*{};(9,0)*{};
\endxy$} \maps \scs \onel\rightarrow\cal{F}^{(a)}_i\cal{E}^{(a)}_i\onel\la a^2+a\l_i \ra,
\end{align*}
\end{minipage}
where we use the short hand notation of thin strands labeled $a$ corresponds to $a$ thin strands labeled by $i \in I$. For example,
\[
\hackcenter{ \begin{tikzpicture} [scale=.75]
\draw[thick,  double, <- ](0,2).. controls ++(0,-.75) and ++(0,.3) ..(.6,1);
\draw[thick,  double, <- ](1.2,2).. controls ++(0,-.75) and ++(0,.3) ..(.6,1) to (.6,0);;
 \node at (0,2.2) {$\scs a$};
 \node at (1.2,2.2) {$\scs b$};
\node at (.6,-.2) {$\scs a+b$};
%\node[draw, fill=white!20 ,rounded corners ] at ( .6,.55 ) {$ \varepsilon_{j}$};
\end{tikzpicture}}
\;\; := \quad
\hackcenter{\begin{tikzpicture} [scale=0.75]
\draw[thick,  ->] (.4,-1.2) .. controls ++(0,.5) and ++(0,-.5) ..(-1.4,.1) to (-1.4,1.85);
\draw[thick,  ->] (1.4,-1.2) .. controls ++(0,.6) and ++(0,-.5) .. (-.4,.1) to (-.4,1.85);
\draw[thick,  ->] (-.4,-1.2) .. controls ++(0,.5) and ++(0,-.5) ..(1.4,.1) to (1.4,1.85);
\draw[thick,  ->] (-1.4,-1.2) .. controls ++(0,.6) and ++(0,-.5) ..(.4,.1) to (.4,1.85);
\draw[fill=white!20,] (-1.6,.3) rectangle (-.2,1.25);
\draw[fill=white!20,] (1.6,.3) rectangle (.2,1.25);
\node at (-.9,1.5) {$\dots$};
\node at (.9,1.5) {$\dots$};
\node at (-.8,-1.1) {$\dots$};
\node at (.9,-1.1) {$\dots$};
 \node at (-.9,.75) {$e_a$};
 \node at (.9,.75) {$e_b$};
\end{tikzpicture}}
\qquad \qquad
\hackcenter{ \begin{tikzpicture} [scale=.75]
\draw[thick,  double  ](0,-2).. controls ++(0,.75) and ++(0,-.3) ..(.6,-1);
\draw[thick,  double, -> ](1.2,-2).. controls ++(0,.75) and ++(0,-.3) ..(.6,-1) to (.6,0);;
 \node at (0,-2.2) {$\scs b$};
 \node at (1.2,-2.2) {$\scs a$};
\node at (.6,.2) {$\scs a+b$};
%\node[draw, fill=white!20 ,rounded corners ] at ( .6,-.65 ) {$ \varepsilon_{j}$};
\end{tikzpicture}}
\;\; := \quad
\hackcenter{\begin{tikzpicture} [scale=0.75]
\draw[thick, -> ] (-.6,-.5) to (-.6,1.85);
\draw[thick,  ->] (.6,-.5) to (.6,1.85);
\draw[thick, -> ] (-1.2,-.5) to (-1.2,1.85);
\draw[thick,  ->] (1.2,-.5) to (1.2,1.85);
\draw[fill=white!20,] (-1.3,.1) rectangle (1.3,1.25);
\node at (0,-.2) {$\dots$};
\node at (0,1.65) {$\dots$};
 \node at (0,.75) {$e_{a+b}$};
\end{tikzpicture}} .
\]

The thick cap and cups satisfy zig-zag equations so that diagrams in $\UcatD_Q$ related by isotopy define identical $2$-morphisms in $\UcatD_Q$.
Furthermore, the splitters and mergers defined above satisfy associativity and coassociativity relations \cite[Proposition 2.4]{KLMS}  making it possible to define
\[
\hackcenter{
\begin{tikzpicture} [scale=0.75]
\draw[thick,  double] (0,1.15) to (0,1.85);
\node at (0,3.2) {$\dots$};
\draw[thick,  ] (0,1.85) .. controls ++(.25,.1) and ++(0,-.5) .. (.6,3.5);
\draw[thick,  ] (0,1.85) .. controls ++(-.25,.1) and ++(0,-.5) .. (-.6,3.5);
\draw[thick,  ] (0,1.85) .. controls ++(.25,.1) and ++(0,-.7) .. (1.2,3.5);
\draw[thick,  ] (0,1.85) .. controls ++(-.25,.1) and ++(0,-.7) .. (-1.2,3.5);
%\node[draw, fill=white!20 ,rounded corners ] at (0,2.7) {$ \qquad \varepsilon_{\und{\ell}} \qquad $};
%\node[draw, fill=white!20 ,rounded corners ] at (0,.35) {$ \qquad   \und{x}^{\hat{\und{\ell}}} \qquad $};
%
% \node at (0,-.2) {$\scs a$};
% \node at (1.2,-.2) {$\scs b$};
  \node at (0,.95) {$\scs a$};
% \node at (1.2,3.2) {$\scs b$};
  %\node at (1,1.6) {$\scs n$};
\end{tikzpicture}}
\;\; := \quad
\hackcenter{\begin{tikzpicture} [scale=0.75]
\draw[thick,  ] (-.6,-.5) to (-.6,1.85);
\draw[thick,  ] (.6,-.5) to (.6,1.85);
\draw[thick,  ] (-1.2,-.5) to (-1.2,1.85);
\draw[thick,  ] (1.2,-.5) to (1.2,1.85);
\draw[fill=white!20,] (-1.3,.1) rectangle (1.3,1.25);
\node at (0,-.2) {$\dots$};
\node at (0,1.65) {$\dots$};
 \node at (0,.75) {$D_a$};
\end{tikzpicture}}
\qquad \qquad \quad
\hackcenter{
\begin{tikzpicture} [scale=0.75]
\draw[thick,  ] (-.6,-.5) .. controls ++(0,.5) and ++(-.4,-.1) .. (0,1.15);
\draw[thick,  ] (.6,-.5) .. controls ++(0,.5) and ++(.4,-.1) .. (0,1.15);
\draw[thick,  ] (-1.2,-.5) .. controls ++(0,.7) and ++(-.4,-.1) .. (0,1.15);
\draw[thick,  ] (1.2,-.5) .. controls ++(0,.7) and ++(.4,-.1) .. (0,1.15);
\draw[thick,  double, ] (0,1.15) to (0,1.85);
\node at (0,0) {$\dots$};
  \node at (0,2.05) {$\scs a$};
\end{tikzpicture}} \;\; := \quad
\hackcenter{\begin{tikzpicture} [scale=0.75]
\draw[thick,  ] (-.6,-.5) to (-.6,1.85);
\draw[thick,  ] (.6,-.5) to (.6,1.85);
\draw[thick,  ] (-1.2,-.5) to (-1.2,1.85);
\draw[thick,  ] (1.2,-.5) to (1.2,1.85);
\draw[fill=white!20,] (-1.3,.1) rectangle (1.3,1.25);
\node at (0,-.2) {$\dots$};
\node at (0,1.65) {$\dots$};
 \node at (0,.75) {$e_a$};
\end{tikzpicture}}
\]
unambiguously.

The center of the nilHecke algebra ${\rm NH}_a$ is isomorphic to the ring of symmetric functions $\Z[x_1,\dots,x_a]^{S_a}$. Hence, any $x\in \Sym_a$  defines an endomorphism of
$\cal{E}^{(a)}\onel$ (respectively $\cal{F}^{(a)}\onel$) in $\dot{\cal{U}}_Q$ since multiplication by symmetric functions $x\in \sym_a$  commutes with the idempotent $e_a$:
\[
\hackcenter{
\begin{tikzpicture} [scale=.75]
\draw[thick,  double, <-](0,1.15) to(0,-1.15);
%\node[draw, fill=white!20 ,rounded corners ] at (0,0) {$ s_{\mu}$};
 \node at (0,1.35) {$\scs a$};
\node at (0,-1.35) {$\scs a $};
\end{tikzpicture}}
\;\; := \;\;
\hackcenter{\begin{tikzpicture} [scale=0.75]
\draw[thick, -> ] (-.6,-.5) to (-.6,1.85);
\draw[thick, -> ] (.6,-.5) to (.6,1.85);
\draw[thick,  ->] (-1.2,-.5) to (-1.2,1.85);
\draw[thick,  ->] (1.2,-.5) to (1.2,1.85);
\draw[fill=white!20,] (-1.3,.1) rectangle (1.3,1.25);
\node at (0,-.2) {$\dots$};
\node at (0,1.65) {$\dots$};
 \node at (0,.75) {$e_a$};
\end{tikzpicture}}~,
\qquad \qquad \quad
\hackcenter{
\begin{tikzpicture} [scale=.75]
\draw[thick,  double, <-](0,1.15) to(0,-1.15);
\node  at (.3,0) {$x$};
 \filldraw  [black] (0,0) circle (2.5pt);
 \node at (0,1.35) {$\scs a$};
\node at (0,-1.35) {$\scs a$};
\end{tikzpicture}} \; \; \text{for $x \in \sym_a$, }
\qquad \qquad \quad
\hackcenter{
\begin{tikzpicture} [scale=.75]
\draw[thick,  double, <-](0,1.15) to(0,-1.15);
\filldraw  [black] (0,.5) circle (2.5pt);
\filldraw  [black] (0,-.6) circle (2.5pt);
\node at (.3,.5) {$x$};
\node  at (.3,-.6) {$y$};
 \node at (0,1.35) {$\scs a$};
\node at (0,-1.35) {$\scs a $};
\end{tikzpicture}}
\;\; = \;\;
\hackcenter{
\begin{tikzpicture} [scale=.75]
\draw[thick,  double, <-](0,1.15) to(0,-1.15);
\filldraw  [black] (0,0) circle (2.5pt);
\node  at (.35,0) {$xy$};
 \node at (0,1.35) {$\scs a$};
\node at (0,-1.35) {$\scs a $};
\end{tikzpicture}}
\]
where the product $xy$ is well defined since $xe_aye_a = xy e_a$.

For any composition $\mu=(\mu_1,\dots, \mu_n)$ write $\und{x}^{\mu}:= x_1^{\mu_1} x_2^{\mu_2} \dots x_n^{\mu_n}$.  We depict these diagrammatically as
\begin{equation}
  \und{x}^{\mu}  \;\; = \;\;
  \hackcenter{\begin{tikzpicture}
    \draw[thick, ] (-1.2,0) -- (-1.2,1.5);
    \draw[thick, ] (-.6,0) -- (-.6,1.5);
    \draw[thick,  ] (.6,0) -- (.6,1.5);
        \draw[thick,  ] (1.2,0) -- (1.2,1.5);
    \node at (-1.2,.75) {$\bullet$};
    \node at (-1.43,.9) {$\scs \mu_1$};
    \node at (-.6,.75) {$\bullet$};
    \node at (-.83,.9) {$\scs \mu_2$};
    \node at (.6,.75) {$\bullet$};
    \node at (.27,.9) {$\scs \mu_{n-1}$};
    \node at (1.2,.75) {$\bullet$};
    \node at (1.5,.9) {$\scs \mu_n$};
    \node at (0, .35) {$\cdots$};
\end{tikzpicture}}
\;\; = \;\;
  \hackcenter{\begin{tikzpicture}
    \draw[thick, ] (-1.2,0) -- (-1.2,1.5);
    \draw[thick, ] (-.6,0) -- (-.6,1.5);
    \draw[thick,  ] (.6,0) -- (.6,1.5);
        \draw[thick, ] (1.2,0) -- (1.2,1.5);
    \node[draw, fill=white!20 ,rounded corners ] at (0,.75) {$ \qquad \quad \und{x}^{\mu} \quad \qquad$};
    \node at (0, .35) {$\cdots$};
\end{tikzpicture}} .
\end{equation}

 For any Schur polynomial $s_{\mu}$ corresponding to the partition $\mu = (\mu_1, \dots , \mu_k)$  one can show that
\[
\hackcenter{
\begin{tikzpicture} [scale=.75]
\draw[thick,  double, ](0,1.15) to(0,-1.15);
\node[draw, fill=white!20 ,rounded corners ] at (0,0) {$ s_{\mu}$};
 \node at (0,1.35) {$\scs k$};
\node at (0,-1.35) {$\scs k $};
\end{tikzpicture}}
\; \; =\;\;
\hackcenter{
\begin{tikzpicture} [scale=0.65]
\draw[thick,  ] (0,-1.15).. controls++(-.4,.1)and ++(0,-.5).. (-.6,0) .. controls ++(0,.5) and ++(-.4,-.1) .. (0,1.15);
\draw[thick,  ] (0,-1.15).. controls ++(.4,.1) and ++(0,-.5).. (.6,0) .. controls ++(0,.5) and ++(.4,-.1) .. (0,1.15);
\draw[thick,  ]  (0,-1.15).. controls ++(-.4,.1)and ++(0,-.7) .. (-1.2,0) .. controls ++(0,.7) and ++(-.4,-.1) .. (0,1.15);
\draw[thick,  ] (0,-1.15) .. controls ++(.4,.1) and ++(0,-.7) .. (1.2,0) .. controls ++(0,.7) and ++(.4,-.1) .. (0,1.15);
\node[draw, fill=white!20 ,rounded corners ] at (0,.3 ) {$ \qquad \und{x}^{\mu+\delta}\qquad $};
\draw[thick,  double, ] (0,1.15) to (0,1.85);
\draw[thick,  double ] (0,-1.15) to (0,-1.65);
\node at (0,-.4) {$\dots$};
  \node at (0,2.05) {$\scs k$};
\end{tikzpicture}}
\]
where $\mu+\delta$ is the partition $(\mu_1+k-1, \mu_2+k-2, \dots, \mu_k+ 0)$.  We denote by $h_m = s_{(m)}$ and $\varepsilon_m =s_{(1^m)}$ the complete and elementary symmetric function, respectively.

We now present some important identities holding in the thick calculus.
Note that by \cite[Corollary 4.7 and Proposition 4.8]{KLMS}
\begin{alignat}{2} \label{eq:bub-slide}
%\sum_{\overset{p+q+r}{=b-a+1-\l_i}}
\hackcenter{ \begin{tikzpicture} [scale=.7]
\draw[thick, double, ->] (0,0) to (0,2.5) ;
 %\filldraw  [black] (0,1.5) circle (2.5pt);
 %\node at (-.4,1.5) { $h_p $};
  \node at (0,-.2) { $a $};
      \node at (-.2,.4) { $i $};
      \node at (1.4,1.55) { $i $};
 \draw  (1.5,1) arc (360:180:.45cm) [thick];
 \draw[,<-](1.5,1) arc (0:180:.45cm) [thick];
 \filldraw  [black] (.7,.75) circle (2.5pt);
 \node at (1.0,.3) {\tiny $\ast+j$};
 \node at (1.15, 2.2) { $\lambda $};
\end{tikzpicture}}
&\; = \;
\sum_{\overset{x+y+z}{=j}} (-1)^{x+y}\;
\hackcenter{ \begin{tikzpicture} [scale=.7]
\draw[thick, double, ->] (2.5,0) to (2.5,2.5) ;
 \filldraw  [black] (2.5,1.75) circle (2.5pt);
  \filldraw  [black] (2.5,1) circle (2.5pt);
 \node at (2.9,1.75) { $\varepsilon_x$};
 \node at (2.9,1) { $\varepsilon_y$};
  \node at (2.5,-.2) { $a $};
  \node at (2.7,.4) { $i $};
 \draw  (1.7,1) arc (360:180:.45cm) [thick];
 \draw[,<-](1.7,1) arc (0:180:.45cm) [thick];
 \filldraw  [black] (.9,.75) circle (2.5pt);
 \node at (1.1,.3) {\tiny $\ast+z$};
 \node at (1.6,1.55) { $i $};
 \node at (3.25, 2.2) { $\lambda $};
\end{tikzpicture}}
\quad \;\;
%\sum_{\overset{p+q+r}{=b-a+1-\l_i}}
&&
\hackcenter{ \begin{tikzpicture} [scale=.7]
\draw[thick, double, ->] (2.5,0) to (2.5,2.5) ;
% \filldraw  [black] (2.5,1.75) circle (2.5pt);
%  \filldraw  [black] (2.5,1) circle (2.5pt);
% \node at (2.9,1.75) { $\varepsilon_x$};
% \node at (2.9,1) { $\varepsilon_y$};
  \node at (2.5,-.2) { $a $};
  \node at (2.7,.4) { $i $};
 \draw  (1.7,1) arc (360:180:.45cm) [thick];
 \draw[,<-](1.7,1) arc (0:180:.45cm) [thick];
 \filldraw  [black] (.9,.75) circle (2.5pt);
 \node at (1.1,.3) {\tiny $\ast+j$};
 \node at (1.6,1.55) { $i $};
 \node at (3.25, 2.2) { $\lambda $};
\end{tikzpicture}}
\; = \;
\sum_{\overset{x+y+z}{=j}}  \;
\hackcenter{ \begin{tikzpicture} [scale=.7]
\draw[thick, double, ->] (0,0) to (0,2.5) ;
 \filldraw  [black] (0,1.75) circle (2.5pt);
  \filldraw  [black] (0,1) circle (2.5pt);
 \node at (-.4,1.75) { $h_x $};
 \node at (-.4,1) { $h_y$};
  \node at (0,-.2) { $a $};
      \node at (-.2,.4) { $i $};
      \node at (1.4,1.55) { $i $};
 \draw  (1.5,1) arc (360:180:.45cm) [thick];
 \draw[,<-](1.5,1) arc (0:180:.45cm) [thick];
 \filldraw  [black] (.7,.75) circle (2.5pt);
 \node at (1.0,.3) {\tiny $\ast+z$};
 \node at (1.15, 2.2) { $\lambda $};
\end{tikzpicture}}
\\
\label{eq:ccbub-slide}
%\sum_{\overset{p+q+r}{=b-a+1-\l_i}}
\hackcenter{ \begin{tikzpicture} [scale=.7]
\draw[thick, double, ->] (0,0) to (0,2.5) ;
 %\filldraw  [black] (0,1.5) circle (2.5pt);
 %\node at (-.4,1.5) { $h_p $};
  \node at (0,-.2) { $a $};
      \node at (-.2,.4) { $i $};
      \node at (1.4,1.55) { $i $};
 \draw  (1.5,1) arc (360:180:.45cm) [thick];
 \draw[,->](1.5,1) arc (0:180:.45cm) [thick];
 \filldraw  [black] (.7,.75) circle (2.5pt);
 \node at (1.0,.3) {\tiny $\ast+j$};
 \node at (1.15, 2.2) { $\lambda $};
\end{tikzpicture}}
&\; = \;
\sum_{\overset{x+y+z}{=j}}
\hackcenter{ \begin{tikzpicture} [scale=.7]
\draw[thick, double, ->] (2.5,0) to (2.5,2.5) ;
 \filldraw  [black] (2.5,1.75) circle (2.5pt);
  \filldraw  [black] (2.5,1) circle (2.5pt);
 \node at (2.9,1.75) { $h_x$};
 \node at (2.9,1) { $h_y$};
  \node at (2.5,-.2) { $a $};
  \node at (2.7,.4) { $i $};
 \draw  (1.7,1) arc (360:180:.45cm) [thick];
 \draw[,->](1.7,1) arc (0:180:.45cm) [thick];
 \filldraw  [black] (.9,.75) circle (2.5pt);
 \node at (1.1,.3) {\tiny $\ast+z$};
 \node at (1.6,1.55) { $i $};
 \node at (3.25, 2.2) { $\lambda $};
\end{tikzpicture}}
\quad \;\;
%\sum_{\overset{p+q+r}{=b-a+1-\l_i}}
&&
\hackcenter{ \begin{tikzpicture} [scale=.7]
\draw[thick, double, ->] (2.5,0) to (2.5,2.5) ;
% \filldraw  [black] (2.5,1.75) circle (2.5pt);
%  \filldraw  [black] (2.5,1) circle (2.5pt);
% \node at (2.9,1.75) { $\varepsilon_x$};
% \node at (2.9,1) { $\varepsilon_y$};
  \node at (2.5,-.2) { $a $};
  \node at (2.7,.4) { $i $};
 \draw  (1.7,1) arc (360:180:.45cm) [thick];
 \draw[,->](1.7,1) arc (0:180:.45cm) [thick];
 \filldraw  [black] (.9,.75) circle (2.5pt);
 \node at (1.1,.3) {\tiny $\ast+j$};
 \node at (1.6,1.55) { $i $};
 \node at (3.25, 2.2) { $\lambda $};
\end{tikzpicture}}
\; = \;
\sum_{\overset{x+y+z}{=j}} (-1)^{x+y}\; \;
\hackcenter{ \begin{tikzpicture} [scale=.7]
\draw[thick, double, ->] (0,0) to (0,2.5) ;
 \filldraw  [black] (0,1.75) circle (2.5pt);
  \filldraw  [black] (0,1) circle (2.5pt);
 \node at (-.4,1.75) { $\varepsilon_x $};
 \node at (-.4,1) { $\varepsilon_y$};
  \node at (0,-.2) { $a $};
      \node at (-.2,.4) { $i $};
      \node at (1.4,1.55) { $i $};
 \draw  (1.5,1) arc (360:180:.45cm) [thick];
 \draw[,->](1.5,1) arc (0:180:.45cm) [thick];
 \filldraw  [black] (.7,.75) circle (2.5pt);
 \node at (1.0,.3) {\tiny $\ast+z$};
 \node at (1.15, 2.2) { $\lambda $};
\end{tikzpicture}}.
\end{alignat}

This implies
\begin{equation} \label{eq:dot4bub}
\hackcenter{ \begin{tikzpicture} [scale=.7]
\draw[thick, double, ->] (2.5,0) to (2.5,2.5) ;
 \filldraw  [black] (2.5,1.5) circle (2.5pt);
  %\filldraw  [black] (2.5,1) circle (2.5pt);
 \node at (2.9,1.5) { $\varepsilon_1$};
 %\node at (2.9,1) { $\varepsilon_y$};
  \node at (2.5,-.2) { $a $};
  \node at (2.7,.4) { $i $};
      %
% \draw  (1.7,1) arc (360:180:.45cm) [thick];
% \draw[,<-](1.7,1) arc (0:180:.45cm) [thick];
% \filldraw  [black] (.9,.75) circle (2.5pt);
% \node at (1.1,.3) {\tiny $\ast+z$};
% \node at (1.6,1.55) { $i $};
 \node at (3.25, 2.2) { $\lambda $};
\end{tikzpicture}}
\;\; =\;\;
\frac{1}{2} \left( \;
\hackcenter{ \begin{tikzpicture} [scale=.7]
\draw[thick, double, ->] (2.5,0) to (2.5,2.5) ;
% \filldraw  [black] (2.5,1.75) circle (2.5pt);
%  \filldraw  [black] (2.5,1) circle (2.5pt);
% \node at (2.9,1.75) { $\varepsilon_x$};
% \node at (2.9,1) { $\varepsilon_y$};
  \node at (2.5,-.2) { $a $};
  \node at (2.7,.4) { $i $};
 \draw  (1.7,1) arc (360:180:.45cm) [thick];
 \draw[,<-](1.7,1) arc (0:180:.45cm) [thick];
 \filldraw  [black] (.9,.75) circle (2.5pt);
 \node at (1.1,.3) {\tiny $\ast+1$};
 \node at (1.6,1.55) { $i $};
 \node at (3.25, 2.2) { $\lambda $};
\end{tikzpicture}}
  - \;\;
\hackcenter{ \begin{tikzpicture} [scale=.7]
\draw[thick, double, ->] (0,0) to (0,2.5) ;
  \node at (0,-.2) { $a $};
      \node at (-.2,.4) { $i $};
      \node at (1.4,1.55) { $i $};
 \draw  (1.5,1) arc (360:180:.45cm) [thick];
 \draw[,<-](1.5,1) arc (0:180:.45cm) [thick];
 \filldraw  [black] (.7,.75) circle (2.5pt);
 \node at (1.0,.3) {\tiny $\ast+1$};
 \node at (1.15, 2.2) { $\lambda $};
\end{tikzpicture}}
\; \right)
\;\; =\;\;
\frac{1}{2} \left( \;
\hackcenter{ \begin{tikzpicture} [scale=.7]
\draw[thick, double, ->] (0,0) to (0,2.5) ;
  \node at (0,-.2) { $a $};
      \node at (-.2,.4) { $i $};
      \node at (1.4,1.55) { $i $};
 \draw  (1.5,1) arc (360:180:.45cm) [thick];
 \draw[,->](1.5,1) arc (0:180:.45cm) [thick];
 \filldraw  [black] (.7,.75) circle (2.5pt);
 \node at (1.0,.3) {\tiny $\ast+1$};
 \node at (1.15, 2.2) { $\lambda $};
\end{tikzpicture}}
  - \;\;
\hackcenter{ \begin{tikzpicture} [scale=.7]
\draw[thick, double, ->] (2.5,0) to (2.5,2.5) ;
% \filldraw  [black] (2.5,1.75) circle (2.5pt);
%  \filldraw  [black] (2.5,1) circle (2.5pt);
% \node at (2.9,1.75) { $\varepsilon_x$};
% \node at (2.9,1) { $\varepsilon_y$};
  \node at (2.5,-.2) { $a $};
  \node at (2.7,.4) { $i $};
 \draw  (1.7,1) arc (360:180:.45cm) [thick];
 \draw[,->](1.7,1) arc (0:180:.45cm) [thick];
 \filldraw  [black] (.9,.75) circle (2.5pt);
 \node at (1.1,.3) {\tiny $\ast+1$};
 \node at (1.6,1.55) { $i $};
 \node at (3.25, 2.2) { $\lambda $};
\end{tikzpicture}}
\; \right) .
\end{equation}

For $i\cdot j =-1$, one can calculate in a similar way that the mixed bubble slides have the form
 \begin{align} \label{eq:jslide}
%\sum_{\overset{p+q+r}{=b-a+1-\l_i}}
\hackcenter{ \begin{tikzpicture} [scale=.75]
\draw[thick, double, ->] (2.5,0) to (2.5,2.5) ;
  \node at (2.5,-.2) { $a $};
  \node at (2.7,.4) { $i $};
 \draw[blue]  (1.7,1) arc (360:180:.45cm) [thick];
 \draw[blue,<-](1.7,1) arc (0:180:.45cm) [thick];
 \filldraw  [black] (.9,.75) circle (2.5pt);
 \node at (1.1,.3) {\tiny $\ast+m$};
 \node at (1.6,1.65) { $j $};
 \node at (3.25, 2.2) { $\lambda $};
\end{tikzpicture}}
&=
\sum_{k=0}^{a}
t_{ij}^{a- k} t_{ji}^{k-a}\;
\hackcenter{ \begin{tikzpicture} [scale=.75]
\draw[thick, double, ->] (0,0) to (0,2.5) ;
 \filldraw  [black] (0,1.5) circle (2.5pt);
 \node at (-.6,1.45) { $\varepsilon_{a-k} $};
  \node at (0,-.2) { $a $};
      \node at (-.2,.4) { $i $};
 \draw[blue]  (1.5,1) arc (360:180:.45cm) [thick];
 \draw[blue,<-](1.5,1) arc (0:180:.45cm) [thick];
 \filldraw  [black] (.7,.75) circle (2.5pt);
    \node at (1.4,1.65) { $j $};
 \node at (1.4,.3) {\tiny $\ast+m-a+k$};
 \node at (1.15, 2.2) { $\lambda $};
\end{tikzpicture}}
=
%\sum_{k'= 0}^{a}
%t_{ij}^{k'} t_{ji}^{-k'}\;
%\hackcenter{ \begin{tikzpicture} [scale=.75]
%\draw[thick, double, ->] (0,0) to (0,2.5) ;
% \filldraw  [black] (0,1.5) circle (2.5pt);
% \node at (-.6,1.45) { $\varepsilon_{k'} $};
%  \node at (0,-.2) { $a $};
%      \node at (-.2,.4) { $i $};
% \draw[blue]  (1.5,1) arc (360:180:.45cm) [thick];
% \draw[blue,<-](1.5,1) arc (0:180:.45cm) [thick];
% \filldraw  [black] (.7,.75) circle (2.5pt);
%    \node at (1.4,1.65) { $j $};
% \node at (1.4,.3) {\tiny $\ast+m-k'$};
% \node at (1.15, 2.2) { $\lambda $};
%\end{tikzpicture}}
%=
\sum_{k'= 0}^{m}
t_{ij}^{k'} t_{ji}^{-k'}\;
\hackcenter{ \begin{tikzpicture} [scale=.75]
\draw[thick, double, ->] (0,0) to (0,2.5) ;
 \filldraw  [black] (0,1.5) circle (2.5pt);
 \node at (-.6,1.45) { $\varepsilon_{k'} $};
  \node at (0,-.2) { $a $};
      \node at (-.2,.4) { $i $};
 \draw[blue]  (1.5,1) arc (360:180:.45cm) [thick];
 \draw[blue,<-](1.5,1) arc (0:180:.45cm) [thick];
 \filldraw  [black] (.7,.75) circle (2.5pt);
    \node at (1.4,1.65) { $j $};
 \node at (1.4,.3) {\tiny $\ast+m-k'$};
 \node at (1.15, 2.2) { $\lambda $};
\end{tikzpicture}} ,
\\
 \label{eq:jslide2}
 \hackcenter{ \begin{tikzpicture} [scale=.75]
\draw[thick, double, ->] (0,0) to (0,2.5) ;
 %\filldraw  [black] (0,1.5) circle (2.5pt);
% \node at (-.6,1.45) { $\varepsilon_{a-k} $};
%  \node at (0,-.2) { $a $};
      \node at (-.2,.4) { $i $};
 \draw[blue]  (1.5,1) arc (360:180:.45cm) [thick];
 \draw[blue,<-](1.5,1) arc (0:180:.45cm) [thick];
 \filldraw  [black] (.7,.75) circle (2.5pt);
    \node at (1.4,1.65) { $j $};
 \node at (1.4,.3) {\tiny $\ast+m$};
 \node at (1.0, 2.2) { $\lambda $};
\end{tikzpicture}}
\;\;& = \;\;
\sum_{s=0}^{m}(-t_{ji}^{-1}t_{ij})^{m-s}
\hackcenter{ \begin{tikzpicture} [scale=.75]
\draw[thick, double, ->] (2.5,0) to (2.5,2.5) ;
 \filldraw  [black] (2.5,1.5) circle (2.5pt);
 \node at (3.2,1.45) { $h_{m-s} $};
  \node at (2.5,-.2) { $a $};
  \node at (2.7,.4) { $i $};
 \draw[blue]  (1.7,1) arc (360:180:.45cm) [thick];
 \draw[blue,<-](1.7,1) arc (0:180:.45cm) [thick];
 \filldraw  [black] (.9,.75) circle (2.5pt);
 \node at (1.1,.3) {\tiny $\ast+s$};
 \node at (1.6,1.65) { $j $};
 \node at (3.25, 2.2) { $\lambda $};
\end{tikzpicture}}
\;\; = \;\;
\sum_{s+t=m}
(-v_{ji})^{t}
\hackcenter{ \begin{tikzpicture} [scale=.75]
\draw[thick, double, ->] (2.5,0) to (2.5,2.5) ;
 \filldraw  [black] (2.5,1.5) circle (2.5pt);
 \node at (3.0,1.45) { $h_{t} $};
  \node at (2.5,-.2) { $a $};
  \node at (2.7,.4) { $i $};
 \draw[blue]  (1.7,1) arc (360:180:.45cm) [thick];
 \draw[blue,<-](1.7,1) arc (0:180:.45cm) [thick];
 \filldraw  [black] (.9,.75) circle (2.5pt);
 \node at (1.1,.3) {\tiny $\ast+s$};
 \node at (1.6,1.65) { $j $};
 \node at (3.25, 2.2) { $\lambda $};
\end{tikzpicture}} .
\end{align}

Equation~\eqref{eq:jslide} implies
\begin{equation} \label{eq:X0}
\hackcenter{ \begin{tikzpicture} [scale=.75]
\draw[thick, double, ->] (2.5,0) to (2.5,2.5) ;
  \node at (2.5,-.2) { $a $};
  \node at (2.7,.4) { $i $};
 \draw[blue]  (1.7,1) arc (360:180:.45cm) [thick];
 \draw[blue,<-](1.7,1) arc (0:180:.45cm) [thick];
 \filldraw  [black] (.9,.75) circle (2.5pt);
 \node at (1.1,.3) {\tiny $\ast+1$};
 \node at (1.6,1.65) { $j $};
 \node at (3.25, 2.2) { $\lambda $};
\end{tikzpicture}}
=
t_{ij}t_{ji}^{-1} \;
\hackcenter{ \begin{tikzpicture} [scale=.75]
\draw[thick, double, ->] (0,0) to (0,2.5) ;
 \filldraw  [black] (0,1.5) circle (2.5pt);
 \node at (-.4,1.5) { $\varepsilon_1 $};
  \node at (0,-.2) { $a $};
      \node at (-.2,.4) { $i$};
 \node at (1.0, 2.2) { $\lambda $};
\end{tikzpicture}}
\;\;
+
\hackcenter{ \begin{tikzpicture} [scale=.75 ]
\draw[thick, double, ->] (0,0) to (0,2.5) ;
% \filldraw  [black] (0,1.5) circle (2.5pt);
% \node at (-.4,1.5) { $e_x $};
  \node at (0,-.2) { $a $};
      \node at (-.2,.4) { $i $};
 \draw[blue]  (1.5,1) arc (360:180:.45cm) [thick];
 \draw[blue,<-](1.5,1) arc (0:180:.45cm) [thick];
 \filldraw  [black] (.7,.75) circle (2.5pt);
          \node at (1.4,1.65) { $j$};
 \node at (1.0,.3) {\tiny $\ast+1$};
 \node at (1.15, 2.2) { $\lambda $};
\end{tikzpicture}}.
\end{equation}

\begin{lemma} \cite[Lemma 4.16]{KLMS}
\label{squarefloplem}
There is an equality of diagrams
\begin{align}
\hackcenter{ \begin{tikzpicture} [scale=.7]
\draw[thick, directed=0.8] (0,1.5) .. controls ++(0,.75) and ++(0,.75) .. (2,1.5);
    \draw[thick, directed=0.8] (2,1) .. controls ++(0,-.75) and ++(0,-.75) .. (0,1);
\draw[thick, double, ->] (0,0) to (0,2.5) ;
  \node at (0,-.2) { $a $};
    \node at (-.2,.4) { $i $};
\draw[thick, double, <-] (2,0) to (2,2.5) ;
  \node at (2,-.2) { $b $};
      \node at (2.5,2) { $\lambda $};
\end{tikzpicture}}
\;\; + \;\;
\hackcenter{ \begin{tikzpicture} [scale=.7]
    \draw[thick, directed=0.8] (0,.5) .. controls ++(0,.75) and ++(0,.75) .. (2,.5);
    \draw[thick, directed=0.8] (2,2) .. controls ++(0,-.75) and ++(0,-.75) .. (0,2);
\draw[thick, double, ->] (0,0) to (0,2.5) ;
  \node at (0,-.2) { $a $};
    \node at (-.2,.4) { $i $};
\draw[thick, double, <-] (2,0) to (2,2.5) ;
  \node at (2,-.2) { $b $};
      \node at (2.5,2) { $\lambda $};
\end{tikzpicture}}
\;\; &= \;\;
(-1)^{a-b}
\sum_{\overset{p+q+r}{=b-a+1-\l_i}}
\hackcenter{ \begin{tikzpicture} [scale=.7]
\draw[thick, double, ->] (0,0) to (0,2.5) ;
 \filldraw  [black] (0,1.5) circle (2.5pt);
 \node at (-.4,1.5) { $h_p $};
  \node at (0,-.2) { $a $};
      \node at (-.2,.4) { $i $};
          \node at (2.9,.6) { $i $};
              \node at (1.4,1.55) { $i $};
\draw[thick, double, <-] (2.25,0) to (2.25,2.5) ;
 \filldraw  [black] (2.25,1.5) circle (2.5pt);
 \node at (2.65,1.5) { $h_q $};
 \node at (2.25,-.2) {$b$};
 \draw  (1.5,1) arc (360:180:.45cm) [thick];
 \draw[,<-](1.5,1) arc (0:180:.45cm) [thick];
 \filldraw  [black] (.7,.75) circle (2.5pt);
 \node at (1.0,.2) {\tiny $\ast+r$};
 %\node at (1.15, 2.2) { $\lambda-b\alpha_i $};
       \node at (2.7,2.2) { $\lambda $};
\end{tikzpicture}} .
\end{align}
\end{lemma}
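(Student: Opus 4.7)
The approach is to reduce the identity to the thin KLR calculus and to iterated applications of the $\mathfrak{sl}_2$ decomposition relations displayed in equation (\ref{eq:sl2}). Since only a single Dynkin node $i$ appears, the entire calculation can be carried out inside $\dot{\cal{U}}_Q(\mathfrak{sl}_2)$, and so all $t_{ij}$ scalars play no role.

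The plan is as follows. First, I would unpack each thick strand into its thin constituents, writing $\cal{E}^{(a)} \onel$ (resp.\ $\cal{F}^{(b)} \onel$) as $\cal{E}^a \onel$ (resp.\ $\cal{F}^b \onel$) dressed by the nilHecke idempotent $e_a$ (resp.\ $e'_b$), and expressing the thick caps and cups as in the preamble. The two diagrams on the left-hand side then become two specific $(e_a, e'_b)$-twisted endomorphisms of $\cal{E}^a \cal{F}^b \onel$: one in which an innermost $\cal{E}\cal{F}$ pair is joined by a cup-below/cap-above configuration, and another in which the neighbouring $\cal{E}\cal{F}$ pair is joined the opposite way. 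The sum of these two diagrams is precisely the shape that the $\mathfrak{sl}_2$ relation~(\ref{eq:sl2}) is designed to rewrite.

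Next, I would apply (\ref{eq:sl2}) to a single interior $\cal{E}\cal{F}$ pair. This exchanges one of the two cap-cup configurations for the other plus a sum of bubble terms which carry residual dots. Iterating this move across all $ab$ strand interactions, and at each step using the nilHecke action together with the fact that symmetric polynomials commute with $e_a$ and $e'_b$, one eventually pulls every residual dot into the symmetric-function slots at the top of the $\cal{E}^{(a)}$ and $\cal{F}^{(b)}$ strands and pushes every bubble into the middle region. The bubble-slide identities~(\ref{eq:bub-slide}) and~(\ref{eq:ccbub-slide}) are exactly what is needed to commute bubbles past the thick strands during this migration, and the coefficients that appear are the ones that resum under Jacobi--Trudi into the $h_p \, h_q$ packaging on the right-hand side.

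The constraint $p+q+r = b-a+1-\l_i$ and the overall sign $(-1)^{a-b}$ should fall out automatically from degree bookkeeping: the total degree of each bubble-plus-dots term contributed by (\ref{eq:sl2}) equals $\pm \l_i - 1$, and the discrepancy between $a$ and $b$ accounts for the sign via the definition of the thick cap and cup. The main obstacle is purely combinatorial: after the iteration one obtains a large sum of products of elementary and complete symmetric functions multiplied by bubbles of various degrees, and collapsing this sum into the clean Schur-indexed form $\sum_{p+q+r} h_p \, h_q \, (\text{bubble of degree } \ast + r)$ requires a careful Jacobi--Trudi-style repackaging (essentially the one performed in Section~4 of \cite{KLMS}). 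Managing the signs coherently through every sl$_2$ application, and ensuring that the fake-bubble conventions in (\ref{eq:degreezero}) absorb the boundary terms correctly, is where most of the bookkeeping difficulty lies.
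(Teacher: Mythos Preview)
The paper does not actually prove this lemma; it simply cites it as \cite[Lemma 4.16]{KLMS} and uses the result. So there is no proof in the paper to compare against.

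Your sketch is a plausible outline of one route to the identity, and the ingredients you name (the $\mathfrak{sl}_2$ relation~(\ref{eq:sl2}), bubble slides, symmetric-function repackaging) are the right ones. That said, the actual argument in \cite{KLMS} is organized differently and more efficiently: rather than exploding everything to thin strands and iterating (\ref{eq:sl2}) $ab$ times, the proof there works directly in the thick calculus. It uses the already-established thick crossing (``square switch'') relations and the Sto\v{s}i\'c-type formulas to rewrite the two ``square flop'' diagrams, and then invokes the thick bubble slides of \cite[Propositions~4.7--4.8]{KLMS} together with the infinite Grassmannian relation to arrive at the right-hand side. The Jacobi--Trudi repackaging is already baked into those thick identities, so the bookkeeping you flag as the main obstacle is largely avoided. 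Your thin-strand approach would in principle succeed, but it amounts to re-deriving those thick-calculus lemmas on the fly, which is substantially more laborious than quoting them.
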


\begin{lemma} \label{lem:A2}
For any $a,b, \beta \geq 0$ the identities
\begin{align}
 \sum_{\overset{p+q+r}{=\beta}}
\hackcenter{ \begin{tikzpicture} [scale=.7]
\draw[thick, double, ->] (0,0) to (0,2.5) ;
 \filldraw  [black] (0,1.5) circle (2.5pt);
 \node at (-.4,1.5) { $h_p $};
  \node at (0,-.2) { $a $};
      \node at (-.2,.4) { $i $};
          \node at (2.4,.6) { $i $};
\draw[thick, double, <-] (2.0,0) to (2.0,2.5) ;
 \filldraw  [black] (2.0,1.5) circle (2.5pt);
 \node at (2.4,1.5) { $h_q $};
 \node at (2.0,-.2) {$b$};
 \draw  (1.5,1) arc (360:180:.45cm) [thick];
 \draw[,<-](1.5,1) arc (0:180:.45cm) [thick];
 \filldraw  [black] (.7,.75) circle (2.5pt);
 \node at (1.4,1.65) { $i $};
 \node at (.8,.3) {\tiny $\ast+r$};
 %\node at (1.15, 2.2) { $\lambda-b\alpha_i $};
       \node at (2.4,2.2) { $\lambda $};
\end{tikzpicture}}
& =
\sum_{ q+y+z=\beta}
(-1)^{y}
\hackcenter{ \begin{tikzpicture} [scale=.7]
\draw[thick, double, ->] (.5,0) to (.5,2.5) ;
   \filldraw  [black] (.5,1.5) circle (2.5pt);
 \node at (.1,1.55) { $\varepsilon_y $};
  \node at (.5,-.2) { $a $};
      \node at (.3,.3) { $i $};
      \node at (1.8,.5) { $i $};
\draw[thick, double, <-] (1.5,0) to (1.5,2.5) ;
 \filldraw  [black] (1.5,1.5) circle (2.5pt);
    \node at (1.9,1.5) { $h_q $};
    \node at (1.5,-.2) {$b$};
 \draw  (-.5,1) arc (360:180:.45cm) [thick];
 \draw[,<-](-.5,1) arc (0:180:.45cm) [thick];
    \filldraw  [black] (-1.3,.75) circle (2.5pt);
     \node at (-.9,1.65) { $i $};
     \node at (-1.2,.3) {\tiny $\ast+z$};
 %\node at (1.15, 2.2) { $\lambda-b\alpha_i $};
       \node at (2.2,2.2) { $\lambda $};
\end{tikzpicture}}
=
\sum_{ q+y+z=\beta}
(-1)^{q}
\hackcenter{ \begin{tikzpicture} [scale=.7]
\draw[thick, double, ->] (.5,0) to (.5,2.5) ;
   \filldraw  [black] (.5,1.5) circle (2.5pt);
 \node at (.1,1.55) { $h_y $};
  \node at (.5,-.2) { $a $};
      \node at (.3,.3) { $i $};
      \node at (1.8,.5) { $i $};
\draw[thick, double, <-] (1.5,0) to (1.5,2.5) ;
 \filldraw  [black] (1.5,1.5) circle (2.5pt);
    \node at (1.9,1.5) { $\varepsilon_q $};
    \node at (1.5,-.2) {$b$};
 \draw  (3.1,1) arc (360:180:.45cm) [thick];
 \draw[,<-](3.1,1) arc (0:180:.45cm) [thick];
    \filldraw  [black] (2.4,.65) circle (2.5pt);
     \node at (2.7,1.65) { $i $};
     \node at (2.6,.3) {\tiny $\ast+z$};
       \node at (2.4,2.2) { $\lambda $};
\end{tikzpicture}}
\end{align}
hold.
%Furthermore, these elements are central in $\END_{\UcatD_Q}(\cal{E}^{(a)} \cal{F}^{(b)}\onel)$.
%and $\END_{\UcatD_Q}( \cal{F}^{(b)}\cal{E}^{(a)}\onel)$, respectively.
\end{lemma}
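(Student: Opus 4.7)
The plan is to deduce both equalities from a single sliding move per side, followed by collapsing the resulting product of symmetric functions using the generating-function identity $H(t)E(-t)=1$, where $H(t)=\sum_{n\ge 0} h_n t^n$ and $E(t)=\sum_{n\ge 0}\varepsilon_n t^n$.

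For the equality between the first and second expressions, I would slide the counterclockwise bubble leftward past the upward $\cE^{(a)}_i$-strand using the first equation of \eqref{eq:bub-slide}. This replaces the bubble labeled $\ast+r$ on the right of the strand by a sum over $x+y+z=r$ of $(-1)^{x+y}\varepsilon_x \varepsilon_y$ applied on the upward strand times the bubble labeled $\ast+z$ on the left. The upward strand then carries the product $h_p \varepsilon_x \varepsilon_y$ with overall sign $(-1)^{x+y}$. Introducing the index $k=p+x+y$ (with $q$ and $z$ held fixed), the contribution on that strand becomes
\begin{equation*}
\sum_{p+x+y=k}(-1)^{x+y}\, h_p\, \varepsilon_x\, \varepsilon_y \;=\; (-1)^k \varepsilon_k,
\end{equation*}
where the evaluation follows from $H(t)E(-t)^2 = E(-t)$, itself a direct consequence of $H(t)E(-t)=1$. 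Reindexing the triple sum as $q+k+z=\beta$ then matches the second expression, with $y$ there playing the role of $k$.

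For the equality with the third expression I would perform the mirror move on the other side: slide the counterclockwise bubble rightward past the downward $\cF^{(b)}_i$-strand. The analogue of \eqref{eq:bub-slide} for a counterclockwise bubble crossing a downward thick strand produces $(-1)^{x+y}\varepsilon_x \varepsilon_y$ on the downward strand with the bubble now placed to its right. Combining with the existing $h_q$ decoration and applying the same identity $H(t)E(-t)^2 = E(-t)$ collapses the product to $(-1)^{k'}\varepsilon_{k'}$ with $k'=q+x+y$, reproducing the third expression after reindexing.

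The main obstacle I anticipate is justifying the downward-strand version of \eqref{eq:bub-slide}: specifically, that sliding a counterclockwise bubble of degree $\ast+r$ across a downward thick strand of thickness $b$ produces exactly $(-1)^{x+y}\varepsilon_x\varepsilon_y$ (rather than $h_x h_y$ or some mixed sign). The upward-strand identities of \eqref{eq:bub-slide} were extracted from the thick calculus of \cite{KLMS}, and the downward version should follow by cyclic biadjointness of $\cE^{(a)}_i$ and $\cF^{(b)}_i$ in $\UcatD_Q$, together with the fact that the orientation of the bubble (and not the direction of the strand) dictates whether the $\varepsilon$- or $h$-family appears. Once that slide formula is recorded, both halves of the lemma reduce uniformly to the single symmetric-function identity $H(t)E(-t)^2=E(-t)$.
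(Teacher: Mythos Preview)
Your approach is essentially identical to the paper's. For the first equality you slide the clockwise bubble leftward across the thick $\cE_i^{(a)}$-strand via \eqref{eq:bub-slide} and then collapse $\sum_{p+x=n}(-1)^x h_p\varepsilon_x=\delta_{n,0}$; the paper does exactly this, phrasing the collapse as ``the fundamental relation between complete and elementary symmetric functions'' rather than as $H(t)E(-t)^2=E(-t)$, but these are the same identity. For the second equality the paper simply says ``proven similarly by pushing the interior bubble to the right,'' which is precisely your proposed mirror move across the downward $\cF_i^{(b)}$-strand.

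Your flagged obstacle---the exact form of the clockwise-bubble slide across a thick \emph{downward} strand---is real in the sense that the paper does not write it out either, but it is a standard consequence of cyclicity in $\UcatD_Q$: bending the thick $\cF_i^{(b)}$-strand with the thick cap/cup converts the question into the already-stated upward slides \eqref{eq:bub-slide}, and one obtains exactly the $(-1)^{x+y}\varepsilon_x\varepsilon_y$ contribution you need (so that combining with the existing $h_q$ and collapsing via $H(t)E(-t)^2=E(-t)$ yields $(-1)^k\varepsilon_k$, matching the third expression after reindexing). So there is no gap---only a routine verification that both you and the paper leave implicit.
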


\begin{proof}
The first identity is proved using \eqref{eq:bub-slide} as follows:
\begin{align}
  & \sum_{\overset{p+q+r}{=\beta}}
\hackcenter{ \begin{tikzpicture} [scale=.75]
\draw[thick, double, ->] (0,0) to (0,2.5) ;
 \filldraw  [black] (0,1.5) circle (2.5pt);
 \node at (-.4,1.5) { $h_p $};
  \node at (0,-.2) { $a $};
      \node at (-.2,.4) { $i $};
          \node at (2.9,.6) { $i $};
\draw[thick, double, <-] (2.0,0) to (2.0,2.5) ;
 \filldraw  [black] (2.0,1.5) circle (2.5pt);
 \node at (2.4,1.5) { $h_q $};
 \node at (2.0,-.2) {$b$};
 \draw  (1.5,1) arc (360:180:.45cm) [thick];
 \draw[,<-](1.5,1) arc (0:180:.45cm) [thick];
 \filldraw  [black] (.7,.75) circle (2.5pt);
 \node at (1.4,1.65) { $i $};
 \node at (.8,.3) {\tiny $\ast+r$};
 %\node at (1.15, 2.2) { $\lambda-b\alpha_i $};
       \node at (2.4,2.2) { $\lambda $};
\end{tikzpicture}}
 \; = \;
\sum_{\overset{p+q+r}{=\beta}}
\sum_{\overset{x+y+z}{=r}}(-1)^{x+y}
\hackcenter{ \begin{tikzpicture} [scale=.75]
\draw[thick, double, ->] (.5,0) to (.5,2.5) ;
 \filldraw  [black] (.5,1.75) circle (2.5pt);
  \filldraw  [black] (.5,1.25) circle (2.5pt);
   \filldraw  [black] (.5,.75) circle (2.5pt);
 \node at (.1,1.75) { $h_p $};
 \node at (.1,1.25) { $\varepsilon_x $};
 \node at (.1,.75) { $\varepsilon_y $};
  \node at (.5,-.2) { $a $};
      \node at (.3,.3) { $i $};
      \node at (2.3,.5) { $i $};
\draw[thick, double, <-] (2.0,0) to (2.0,2.5) ;
 \filldraw  [black] (2.0,1.5) circle (2.5pt);
    \node at (2.4,1.5) { $h_q $};
    \node at (2.0,-.2) {$b$};
 \draw  (-.5,1) arc (360:180:.45cm) [thick];
 \draw[,<-](-.5,1) arc (0:180:.45cm) [thick];
    \filldraw  [black] (-1.3,.75) circle (2.5pt);
     \node at (-.9,1.65) { $i $};
     \node at (-1.2,.3) {\tiny $\ast+z$};
 %\node at (1.15, 2.2) { $\lambda-b\alpha_i $};
       \node at (2.4,2.2) { $\lambda $};
\end{tikzpicture}}
\\
&\;\;=
\sum_{ q+y+z=0}^{\beta}
(-1)^{y}
 \delta_{\beta-q-y-z,0}
\hackcenter{ \begin{tikzpicture} [scale=.75]
\draw[thick, double, ->] (.5,0) to (.5,2.5) ;
   \filldraw  [black] (.5,.75) circle (2.5pt);
 \node at (.1,.75) { $\varepsilon_y $};
  \node at (.5,-.2) { $a $};
      \node at (.3,.3) { $i $};
      \node at (2.3,.5) { $i $};
\draw[thick, double, <-] (1.5,0) to (1.5,2.5) ;
 \filldraw  [black] (1.5,1.5) circle (2.5pt);
    \node at (1.9,1.5) { $h_q $};
    \node at (1.5,-.2) {$b$};
 \draw  (-.5,1) arc (360:180:.45cm) [thick];
 \draw[,<-](-.5,1) arc (0:180:.45cm) [thick];
    \filldraw  [black] (-1.3,.75) circle (2.5pt);
     \node at (-.9,1.65) { $i $};
     \node at (-1.2,.3) {\tiny $\ast+z$};
 %\node at (1.15, 2.2) { $\lambda-b\alpha_i $};
       \node at (2.4,2.2) { $\lambda $};
\end{tikzpicture}}
 \;\;=
\sum_{ q+y+z=\beta}
(-1)^{y}
\hackcenter{ \begin{tikzpicture} [scale=.75]
\draw[thick, double, ->] (.5,0) to (.5,2.5) ;
   \filldraw  [black] (.5,1.5) circle (2.5pt);
 \node at (.1,1.55) { $\varepsilon_y $};
  \node at (.5,-.2) { $a $};
      \node at (.3,.3) { $i $};
      \node at (2.3,.5) { $i $};
\draw[thick, double, <-] (1.5,0) to (1.5,2.5) ;
 \filldraw  [black] (1.5,1.5) circle (2.5pt);
    \node at (1.9,1.5) { $h_q $};
    \node at (1.5,-.2) {$b$};
 \draw  (-.5,1) arc (360:180:.45cm) [thick];
 \draw[,<-](-.5,1) arc (0:180:.45cm) [thick];
    \filldraw  [black] (-1.3,.75) circle (2.5pt);
     \node at (-.9,1.65) { $i $};
     \node at (-1.2,.3) {\tiny $\ast+z$};
 %\node at (1.15, 2.2) { $\lambda-b\alpha_i $};
       \node at (2.4,2.2) { $\lambda $};
\end{tikzpicture}}
\end{align}
where we have used the fundamental relation between complete and elementary symmetric functions.

The second identity is proven similarly by pushing the interior bubble to the right.
%The centrality follows from the classification of morphisms in $\END_{\UcatD_Q}(\cal{E}^{(a)} \cal{F}^{(b)}\onel)$ from \cite[Proposition 5.15]{KLMS} and the Central Element Lemma \cite[Lemma 4.14]{KLMS}.
\end{proof}

\begin{lemma}  \label{lem:dots-j}
\begin{align}
& \hackcenter{ \begin{tikzpicture} [scale=.7]
\draw[thick, double, ->] (-1.5,0) to (-1.5,2.5) ;
  \node at (-1.5,-.2) { $a $};
      \node at (-1.7,.5) { $i $};
\draw[thick, double, <-] (0,0) to (0,2.5) ;
\filldraw  [black] (0,1.5) circle (2.5pt);
 \node at (-.4,1.5) { $\varepsilon_1 $};
  \node at (0,-.2) { $b$};
      \node at (-.2,.5) { $i $};
 %
% \draw[blue]  (1.5,1) arc (360:180:.45cm) [thick];
% \draw[blue,<-](1.5,1) arc (0:180:.45cm) [thick];
% \filldraw  [black] (.7,.75) circle (2.5pt);
% \node at (1.0,.3) {\tiny $\ast+1$};
 %  \node at (1.4,1.55) { $i $};
 \node at (.7, 2.2) { $\lambda $};
\end{tikzpicture}}
 - \;
\hackcenter{ \begin{tikzpicture} [scale=.7]
\draw[thick, double, ->] (-1.5,0) to (-1.5,2.5) ;
  \node at (-1.5,-.2) { $a $};
      \node at (-1.7,.5) { $i $};
\draw[thick, double, <-] (0,0) to (0,2.5) ;
\filldraw  [black] (-1.5,1.5) circle (2.5pt);
 \node at (-1.9,1.5) { $\varepsilon_1 $};
  \node at (0,-.2) { $b$};
      \node at (-.2,.5) { $i $};
 \node at (.7, 2.2) { $\lambda $};
\end{tikzpicture}}
  = \;\;
%t_{ij}^{-1} t_{ji}c_{j,\l-b\alpha_i}^{-1}  t_{ji}^{-b}\;
t_{ij}^{-1} t_{ji}    \;
\left( \;
\hackcenter{ \begin{tikzpicture} [scale=.7]
\draw[thick, double, ->] (-1.5,0) to (-1.5,2.5) ;
  \node at (-1.5,-.2) { $a $};
      \node at (-1.7,.5) { $i $};
\draw[thick, double, <-] (0,0) to (0,2.5) ;
  \node at (0,-.2) { $b$};
      \node at (-.2,.5) { $i $};
 \draw[blue]  (1.5,1) arc (360:180:.45cm) [thick];
 \draw[blue,<-](1.5,1) arc (0:180:.45cm) [thick];
 \filldraw  [black] (.7,.75) circle (2.5pt);
 \node at (1.0,.3) {\tiny $\ast+1$};
   \node at (1.5,1.55) { $j $};
 \node at (.7, 2.2) { $\lambda $};
\end{tikzpicture}}
\;\; - \;\; \;
\hackcenter{ \begin{tikzpicture} [scale=.7]
\draw[thick, double, ->] (-1.5,0) to (-1.5,2.5) ;
  \node at (-1.5,-.2) { $a $};
      \node at (-1.7,.5) { $i $};
\draw[thick, double, <-] (0,0) to (0,2.5) ;
  \node at (0,-.2) { $b$};
      \node at (-.2,.5) { $i $};
 \draw[blue]  (-2.5,1) arc (360:180:.45cm) [thick];
 \draw[blue,<-](-2.5,1) arc (0:180:.45cm) [thick];
 \filldraw  [black] (-3.3,.75) circle (2.5pt);
 \node at (-3.0,.3) {\tiny $\ast+1$};
      \node at (-2.4,1.55) { $j $};
 \node at (.9, 2.2) { $\lambda $};
\end{tikzpicture}}
\; \right).
\end{align}
\end{lemma}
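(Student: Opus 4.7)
The plan is to apply the bubble-slide identity \eqref{eq:X0} separately across each of the two parallel strands in the composite $\cal{E}_i^{(a)}\cal{F}_i^{(b)}\onel$. Let $D_M$ denote the auxiliary diagram obtained by placing the $j$-colored bubble (with label $\ast+1$) in the middle region between the $\cal{E}_i^{(a)}$ and $\cal{F}_i^{(b)}$ strands; denote by $D_L$ and $D_R$ the diagrams with the $j$-bubble placed respectively at the far left and at the far right of the composite, so that $D_L$ and $D_R$ are precisely the two terms appearing on the right-hand side of the lemma.

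First, I apply \eqref{eq:X0} to the upward $\cal{E}_i^{(a)}$ strand, whose left and right neighbours are $D_L$ and $D_M$. This yields
\[
 D_L - D_M \;=\; t_{ij}t_{ji}^{-1}\bigl(\varepsilon_1\text{-dot on }\cal{E}_i^{(a)}\bigr).
\]
Next, I establish the analogous bubble-slide across the downward $\cal{F}_i^{(b)}$ strand, whose left and right neighbours are $D_M$ and $D_R$. This is derived from \eqref{eq:X0} via the cyclic biadjoint structure of $\Ucat_Q$: one encloses the $\cal{F}_i^{(b)}$ strand with cups and caps to convert it (up to zigzag) to a ``folded'' $\cal{E}_i^{(b)}$ strand, applies \eqref{eq:X0} there, and then uses the zigzag relations together with cyclicity of the dot \eqref{eq_cyclic_dot-cyc} to collapse the caps and cups. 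Because the strand orientation is reversed, the roles of ``left'' and ``right'' are swapped relative to the $\cal{E}$ case, giving
\[
 D_R - D_M \;=\; t_{ij}t_{ji}^{-1}\bigl(\varepsilon_1\text{-dot on }\cal{F}_i^{(b)}\bigr).
\]

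Subtracting the first identity from the second cancels the common term $D_M$ and produces
\[
 D_R - D_L \;=\; t_{ij}t_{ji}^{-1}\Bigl(\bigl(\varepsilon_1\text{-dot on }\cal{F}_i^{(b)}\bigr) - \bigl(\varepsilon_1\text{-dot on }\cal{E}_i^{(a)}\bigr)\Bigr).
\]
Multiplying both sides by $t_{ij}^{-1}t_{ji}$ yields precisely the identity asserted in Lemma~\ref{lem:dots-j}.

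The main obstacle is step two: verifying the bubble-slide across the downward $\cal{F}_i^{(b)}$ strand with the correct scalar $t_{ij}t_{ji}^{-1}$ and the correct sign. This is not an immediate instance of \eqref{eq:X0} but rather its cyclic image, and while the derivation is mechanical, one must track carefully the signs contributed by the biadjoint cups and caps together with the fake-bubble conventions. The sign reversal between the $\cal{E}$- and $\cal{F}$-slides is the essential feature that makes the intermediate term $D_M$ drop out, leaving only the difference $D_R - D_L$ on the right-hand side.
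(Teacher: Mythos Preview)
Your proposal is correct and follows essentially the same approach as the paper. The paper's proof is the single line ``Using \eqref{eq:X0} the result follows,'' and what you have written is exactly the unpacking of that sentence: apply \eqref{eq:X0} to slide the $j$-bubble across the $\cal{E}_i^{(a)}$ strand, apply its cyclic analogue to slide across the $\cal{F}_i^{(b)}$ strand, and subtract so that the intermediate term $D_M$ cancels. Your caution about the downward-strand version is well placed but the computation is routine, and the scalars come out as you state.
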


\begin{proof}
Using \eqref{eq:X0} the result follows.
\end{proof}

\subsection{Braid group actions}  % Rickard complexes

We now recall how a categorical $\cal{U}_Q$ gives rise to a braid group action in the homotopy category (c.f. \cite{CautisKam}). To do this we consider the following complexes (usually called Rickard complexes).

\begin{equation}\label{Rickard}
\tau_{i}  \onel =
\begin{cases}
\left[ \cal{E}_i^{(-\l_i)} \onel  \xrightarrow{d^+} \cdots \xrightarrow{d^+}  \cal{E}_i^{(-\l_i+k)} \cal{F}_i^{(k)}  \onel  \la k \ra \xrightarrow{d^+} \cdots \right]
& \text{if } \l_i \leq 0 \smallskip\\
\left[ \cal{F}_i^{(\l_i)} \onel  \xrightarrow{d^+} \cdots \xrightarrow{d^+}  \cal{E}_i^{(k)}\cal{F}_i^{(\l_i+k)} \onel  \la k \ra \xrightarrow{d^+} \cdots \right]
& \text{if } \l_i \geq 0
\end{cases}\end{equation}
where the left most term is in cohomological degree zero and with
\begin{equation} \label{eq:dab}
d^+ \; = \; d^+_{(a,b)} \; := \;  (-1)^{\l_i+1}
 \hackcenter{ \begin{tikzpicture} [scale=.7]
 \draw[thick, directed=0.6] (1.5,1.5) .. controls ++(0,-.65) and ++(0,-.65) .. (0,1.5) ;
\draw[thick, double, ->] (0,0) to (0,2.5) ;
  \node at (0,-.2) { $a$};
      \node at (-.2,.5) { $i $};
\draw[thick, double, <- ] (1.5,0) to (1.5,2.5) ;
    \node at (1.5,-.2) {$b$};
        \node at (1.8,.5) { $i $};
       \node at (1.9,2.2) { $\lambda $};
\end{tikzpicture}}
\end{equation}
where $b-a =\l_i$. Similarly we define
\[
 \onel \tau'_{i}  :=
\begin{cases}
\left[ \cdots \xrightarrow{d^-} \onel  \cal{E}_i^{(k)} \cal{F}_i^{(-\l_i+k)}  \la -k \ra \xrightarrow{d^-} \cdots \xrightarrow{d^-} \onel  \cal{F}^{(-\l_i)}  \right]   & \text{if } \l_i \leq 0 \smallskip\\
\left[ \cdots \xrightarrow{d^-} \onel \cal{E}_i^{(\l_i +k)} \cal{F}_i^{(k)}   \la -k \ra \xrightarrow{d^-} \cdots \xrightarrow{d^-} \onel  \cal{E}^{(\l_i)}   \right] & \text{if } \l_i \geq 0 \\
\end{cases}
\]
where the right most term is in cohomological degree zero and with
\begin{equation}
d^{-} \; = \; d^-_{(a,b)} \; = \;
 \hackcenter{ \begin{tikzpicture} [scale=.8]
 \draw[thick, directed=0.6] (0,1) .. controls ++(0,.65) and ++(0,.65) .. (1.5,1) ;
\draw[thick, double, ->] (0,0) to (0,2.5) ;
  \node at (0,-.2) { $a$};
      \node at (-.2,.3) { $i $};
\draw[thick, double, <-] (1.5,0) to (1.5,2.5) ;
    \node at (1.5,-.2) {$b$};
        \node at (1.8,.5) { $i $};
       \node at (-.4,2.2) { $\lambda $};
\end{tikzpicture}}
\end{equation}
where $a-b =\l_i$.

\begin{remark}
In \cite{Cautis} a slightly different form of the complex $\tau_i\onel$ is used when $\l_i \geq 0$, namely
\[
\left[ \cal{F}_i^{(\l_i)} \onel  \xrightarrow{\hat{d}^{+}} \cdots \xrightarrow{\hat{d}^{+}_{}}\cal{F}_i^{(\l_i+k)}\cal{E}_i^{(k)} \onel  \la k \ra \xrightarrow{\hat{d}^{+}} \cdots \right]
\qquad \hat{d}_{}^{+} \;\; = \;\;
\hackcenter{ \begin{tikzpicture} [scale=.7]
 \draw[thick, directed=0.6] (0,1.5) .. controls ++(0,-.65) and ++(0,-.65) .. (1.5,1.5) ;
\draw[thick, double, <-] (0,0) to (0,2.5) ;
  \node at (0,-.2) { $\l_i+k$};
      \node at (-.2,.5) { $i $};
\draw[thick, double, -> ] (1.5,0) to (1.5,2.5) ;
    \node at (1.5,-.2) {$k$};
        \node at (1.8,.5) { $i $};
       \node at (1.9,2.2) { $\lambda $};
\end{tikzpicture}} .
\qquad  %\text{if } \l_i \geq 0
\]
However, this version of the complex is isomorphic to the complex in \eqref{Rickard} by Lemma 5.3 and Proposition 5.10 of \cite{KLMS} with the isomorphism in  homological degree $a$ given by
\begin{equation}
\cal{F}_i^{(\l_i+k)} \cal{E}_i^{(k)} \onel
\xrightarrow{
\xy
(0,0)*{
\begin{tikzpicture}[scale=.75]
	\draw[ultra thick,->,black] (1,0) to [out=90,in=270] (0,1.5);
	\draw[ultra thick, black,->] (1,1.5) to [out=270,in=90] (0,0);
\end{tikzpicture}};
\endxy
}
\cal{E}_i^{(k)}\cal{F}_i^{(\l_i+k)} \onel
\xrightarrow{
(-1)^{k(\l_i+k)}
\xy
(0,0)*{
\begin{tikzpicture}[scale=.75]
	\draw[ultra thick,<-,black] (1,0) to [out=90,in=270] (0,1.5);
	\draw[ultra thick, black,<-] (1,1.5) to [out=270,in=90] (0,0);
\end{tikzpicture}};
\endxy
}
\cal{F}_i^{(\l_i+k)} \cal{E}_i^{(k)} \onel .
\end{equation}
\end{remark}

\begin{definition} \label{def:integrable}
Let $\cal{K}$ be a graded $\Bbbk$-linear $2$-category. A $2$-representation $F\maps \cal{U}_Q \to \cal{K}$ is said to be \emph{integrable} if it satisfied the following  additional properties:
\begin{enumerate}

    \item the objects $ \bm{\l}  := F(\lambda)$ of $\cal{K}$ are  zero for all but finitely many $\l$,

    \item the hom categories $\Hom_{\cal{K}}(\bm{\l},\bm{\l'})$ are idempotent complete with finite dimensional hom spaces in each degree, in other words, hom categories have the Krull-Schmidt property,

    \item \label{eq:finite} for any weight $\l$, $\Hom(\sf{1}_{\bm{\l}},\sf{1}_{\bm{\l}} \la \ell \ra)=0$ if $\ell <0$  and is one-dimensional if $\ell = 0$ and $\sf{1}_{\bm{\l}} \neq 0$, where $\sf{1}_{\bm{\l}}$ denotes the identity morphism on $F(\l)$.
\end{enumerate}
\end{definition}

Note that since $\cal{K}$ is idempotent complete, a $2$-representation $F \maps \cal{U}_Q \to \cal{K}$ extends uniquely to a $2$-representations $\dot{F} \maps \dot{\cal{U}}_Q \to \cal{K}$.  Furthermore, passing to homotopy categories of complexes, this induces a $2$-functor $\dot{F} \maps \Kom(\dot{\cal{U}}_Q) \to \textsf{Kom}(\cal{K})$.

\begin{proposition}[Theorem 6.3 \cite{CautisKam}]
\label{originalbraidprop}
Let $F\maps \cal{U}_Q \to \cal{K}$ be an integrable $2$-representation. Then the complexes $\tau_i, \tau_i'$ satisfy the following braid group relations in $\Kom(\cal{K})$
\begin{align*}
\tau_i \tau_i' \1_\l \cong \1_\l \cong \tau_i' \tau_i \1_\l & \\
\tau_i \tau_j \1_\l \cong \tau_j \tau_i \1_\l & \ \ \text{ if } \la i,j \ra = 0 \\
\tau_i \tau_j \tau_i \1_\l \cong \tau_j \tau_i \tau_j \1_\l & \ \ \text{ if } \la i,j \ra = -1.
\end{align*}
\end{proposition}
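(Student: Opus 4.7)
The plan is to prove each of the three braid relations separately, with the first two being relatively direct consequences of the $\mathfrak{sl}_2$ decomposition and the KLR relations, while the braid relation itself is the main challenge.

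\textbf{Invertibility $\tau_i\tau_i'\1_\l \cong \1_\l$.} First I would form the composition $\tau_i \tau_i' \1_\l$ as a total complex of a bicomplex whose terms are of the form $\sE_i^{(a)}\sF_i^{(b)}\sE_i^{(c)}\sF_i^{(d)}\1_\l$ (with appropriate shifts), the differentials being given by the thick caps $d^+$ and $d^-$. Applying the $\mathfrak{sl}_2$ decomposition \eqref{eq:sl2} (and its divided power analogue from Proposition 5.10 of \cite{KLMS}) to the middle $\sF_i^{(b)}\sE_i^{(c)}$, each such term decomposes into a direct sum of $\sE_i^{(a')}\sF_i^{(d')}\1_\l$ with multiplicities given by symmetric-function coefficients. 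A Gaussian-elimination argument, pairing adjacent terms connected by the differentials coming from $d^+$ and $d^-$ (and using the cyclicity relations together with the cap-cup zig-zag), collapses the entire total complex down to $\1_\l$ concentrated in cohomological degree zero. The other direction $\tau_i'\tau_i \cong \1_\l$ is symmetric.

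\textbf{Far commutativity.} When $\langle i,j\rangle = 0$ the quadratic KLR relation \eqref{eq_r2_ij-gen-cyc} gives that an $i$-$j$ crossing squared equals $t_{ij}\cdot\mathrm{id}$, and the mixed relations \eqref{mixed_rel-cyc} provide explicit isomorphisms $\sE_i\sF_j \cong \sF_j\sE_i$ and $\sF_i\sE_j \cong \sE_j\sF_i$. Consequently each term $\sE_i^{(a)}\sF_i^{(b)}\sE_j^{(c)}\sF_j^{(d)}\1_\l$ in $\tau_i\tau_j \1_\l$ is canonically isomorphic to $\sE_j^{(c)}\sF_j^{(d)}\sE_i^{(a)}\sF_i^{(b)}\1_\l$ in $\tau_j\tau_i \1_\l$. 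Because the differentials $d^+$ of $\tau_i$ and $\tau_j$ involve only like-coloured strands, they pass through these isomorphisms unchanged, giving the desired term-by-term identification of complexes.

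\textbf{Braid relation.} This is the hard part. The plan is to reduce, via the integrability hypothesis of Definition \ref{def:integrable}, to constructing an explicit homotopy equivalence between the triple complexes $\tau_i\tau_j\tau_i\1_\l$ and $\tau_j\tau_i\tau_j\1_\l$. The strategy, following \cite{CautisKam}, is to filter both complexes by pieces indexed by a dominance/length order on the relevant terms (matching the structure of the braid relation $s_is_js_i = s_js_is_j$ in the Weyl group). One then applies the $\mathfrak{sl}_2$ decomposition along the middle strand together with the cubic KLR relation to rewrite the associated graded pieces in a common form, and builds a chain map using the thick mergers/splitters together with a trivalent $i$-$j$-$i$ crossing built from KLR generators. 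Condition \eqref{eq:finite} of integrability (together with Krull-Schmidt) ensures this chain map is determined up to scalar and that verifying it is a homotopy equivalence reduces to computing that the cone is acyclic, which one can check by analysing its cohomology weight-by-weight. The main obstacle is the combinatorial bookkeeping in that cone: one must use the bubble-slides \eqref{eq:bub-slide}, the mixed bubble-slide \eqref{eq:jslide}, and Lemmas \ref{squarefloplem} and \ref{lem:A2} to reduce an a priori large collection of terms involving triple products of divided powers to a manageable acyclic complex. Once the $\mathfrak{sl}_3$ rank-two case is handled, the statement in general $\mathfrak{g}$ follows because all relations involved in the braid check only ever use the rank-two subalgebra generated by $i$ and $j$.
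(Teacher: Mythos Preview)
The paper does not give its own proof of this proposition: it is stated as a citation of \cite[Theorem 6.3]{CautisKam}, with only a remark afterwards noting that the hypotheses there are weaker than a full $\cal{U}_Q$-action. So there is no ``paper's proof'' to compare against; you are attempting to supply an argument where the paper is content to quote the literature.

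That said, your sketch diverges from the actual argument in \cite{CautisKam}, and the divergence matters most in the braid relation. Your invertibility and far-commutativity sketches are fine and essentially standard. But for $\tau_i\tau_j\tau_i \cong \tau_j\tau_i\tau_j$ you propose to build an explicit chain map from thick calculus generators, filter both sides, and then verify acyclicity of the cone ``weight-by-weight'' using bubble slides and Lemmas~\ref{squarefloplem}--\ref{lem:A2}. This is not what \cite{CautisKam} does, and it is not clear your outline can be completed: the triple complexes are large, the combinatorics of matching terms after repeated $\sE\sF$ decompositions is genuinely unpleasant, and you have not identified a filtration whose associated graded pieces are visibly the same on both sides. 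The lemmas you cite control degree-two bubbles and the square-flop; they do not by themselves organise the cancellation in a triple product of Rickard complexes.

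The approach in \cite{CautisKam} is more abstract and avoids this combinatorics. Having already established invertibility, one knows each $\tau_i$ is an equivalence on $\Kom(\cal{K})$. One then uses the integrability hypotheses (finite-dimensional graded hom spaces, Krull--Schmidt, and the one-dimensionality condition~\eqref{eq:finite}) to control $\Hom(\tau_i\tau_j\tau_i\1_\l, \tau_j\tau_i\tau_j\1_\l)$: conjugating by the equivalences reduces this to an endomorphism space of an identity $1$-morphism, which is one-dimensional in degree zero. Any nonzero degree-zero chain map is therefore automatically an isomorphism, and one only has to exhibit a single such map and check it is nonzero (which can be done on the Grothendieck group, where it becomes the classical braid relation). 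This is the key leverage that integrability buys, and it is absent from your outline.
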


\begin{remark}
The results in \cite[Section 6]{CautisKam} actually require something much weaker than a full action of the $2$-category $\cal{U}_Q$.  However, results of \cite{Cautis-rigid} together with ~\cite{Brundan2}, see also \cite{CLau}, give the full action of $\cal{U}_Q$.
\end{remark}

\section{Bubbles and homotopies} \label{sec-bubbles}

\subsection{Notation}

For $j \in I$ we write
\begin{equation}
\label{defofbibi'}
b_{j} \; =\;  b_{j}(\l)
\;\; := \;\;
\hackcenter{ \begin{tikzpicture} [scale=.8]
 \draw[purple]  (-.75,1) arc (360:180:.45cm) [thick];
 \draw[purple, <-](-.75,1) arc (0:180:.45cm) [thick];
     \filldraw  [black] (-1.55,.75) circle (2.5pt);
        \node at (-1.3,.3) { $\scriptstyle \ast + 1$};
        \node at (-1.4,1.7) { $j $};
 %% L
 \node at (-.2,1.5) { $\lambda $};
\end{tikzpicture}}
\end{equation}
Note that any $2$-morphism in $\End^2_{\cal{U}_Q(\mf{g})}(\1_{\l})$ can be written as a linear combination  of such $b_j(\l)$~\cite[Proposition 3.6]{KL3}.

 \begin{definition}
For any $\l \in X$, define a bilinear pairing $(\cdot, \cdot)_Q$ between $\End^2_{\cal{U}_Q(\mf{g})}(\1_{\l})$ and ${\mf h}^*_\Bbbk$ by
\begin{equation} \label{eq:Qform}
 (b_j, \alpha_i)_Q \;\; =\;\;  (-1)^{(\alpha_i,\alpha_{j})}v_{j i} ( \alpha_{j},\alpha_i).
\end{equation}
\end{definition}

It is an easy exercise to check that the Weyl group $W_{\mf{g}}$ acts on $\bigoplus_{\l}\End^2_{\cal{U}_Q(\mf{g})}(\1_{\l})$ via
\[
 s_i(b_j(\l)) := b_j(s_i(\l)) - (b_j,\alpha_i)_Q b_i(s_i(\l)).
\]
Note that we can simplify this equation by omitting the domains to obtain
$$s_i(b_j) := b_j - (b_j,\alpha_i)_Q b_i.$$

\begin{remark} \label{weylactionstandard}
Consider the choice of coefficients for $\cal{U}_Q(\mf{g})$ where
\[
t_{ij} =
\left\{
  \begin{array}{ll}
     -1 & \hbox{ if $i \cdot j =-1$ and $i \longrightarrow j$} \\
      1 & \hbox{ otherwise. }
  \end{array}
\right.
\]
This choice of parameters $Q$ corresponds is the natural choice of KLR algebra $R_Q$ that describes $\Ext$-algberas between perverse sheaves on the quiver variety~\cite{VV}.  In this case we have $v_{ij}=t_{ij}^{-1} t_{ji} = -1$ whenever $(\alpha_i, \alpha_j)=-1$ and subsequently $(b_{j},\alpha_i)_Q = ( \alpha_{j}, \alpha_i )$ for all $i,j \in I$.

Subsequently,  the Weyl group action above gives an action on $\bigoplus_{\l}\End^2_{\cal{U}_Q(\mf{g})}(\1_{\l})$  via:
\begin{equation}
s_i \maps
\hackcenter{ \begin{tikzpicture} [scale=.8]
 \draw  (1.5,1) arc (360:180:.45cm) [thick];
 \draw[,<-](1.5,1) arc (0:180:.45cm) [thick];
 \filldraw  [black] (.7,.75) circle (2.5pt);
 \node at (.6,.3) {\tiny $\ast+1$};
 \node at (.6,1.55) { $j$};
       \node at (1.8,2.0) { $\l$};
\end{tikzpicture}}
\mapsto
\hackcenter{ \begin{tikzpicture} [scale=.8]
 \draw[blue]  (1.5,1) arc (360:180:.45cm) [thick];
 \draw[blue,<-](1.5,1) arc (0:180:.45cm) [thick];
 \filldraw  [black] (.7,.75) circle (2.5pt);
 \node at (.6,.3) {\tiny $\ast+1$};
 \node at (.6,1.55) { $j$};
       \node at (1.8,2.0) { $s_i(\l)$};
\end{tikzpicture}}
%\; - \alpha_j(\alpha_i^{\vee}) \;
\; - \alpha_i^{\vee}(\alpha_j) \;
 \hackcenter{ \begin{tikzpicture} [scale=.8]
 \draw  (1.5,1) arc (360:180:.45cm) [thick];
 \draw[,<-](1.5,1) arc (0:180:.45cm) [thick];
 \filldraw  [black] (.7,.75) circle (2.5pt);
 \node at (.6,.3) {\tiny $\ast+1$};
 \node at (.6,1.55) { $i$};
       \node at (1.8,2.0) { $s_i(\l)$};
\end{tikzpicture}}
\end{equation}
In this case, the natural map $ \mf{h}^{\ast}  \to \End^2_{\cal{U}_Q(\mf{g})}(\onel)$ given by $\alpha_j \mapsto b_j$ intertwines the Weyl group actions.
\end{remark}

\subsection{The homotopies}

The following is the main result of this section. It is an immediate corollary of the subsequent three Propositions.

\begin{theorem}
\label{squareflopcor}
For all $i \in I$  and $b \in \End^2_{\cal{U}_Q(\mf{g})}(\1_\l)$ we have
\begin{equation} \label{eq:lsquare}
(b,\alpha_i)_Q [d^-d^+ + d^+d^-]
=  s_i(b) \cdot \Id_{\tau_i \1_{\lambda}}  - \Id_{\tau_i \1_{\lambda}}\cdot b.
\end{equation}
\end{theorem}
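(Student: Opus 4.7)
The strategy is to reduce to $b = b_j$ by bilinearity, and then verify the identity on each summand $X^k = \cal{E}_i^{(a)}\cal{F}_i^{(b)}\1_\lambda\langle k\rangle$ of the Rickard complex $\tau_i\1_\lambda$ (with $b - a = \l_i$ in the case $\l_i \geq 0$; the other case is symmetric). Since $\End^2_{\cal{U}_Q}(\1_\l)$ is spanned by the $b_j(\l)$ and both sides of~\eqref{eq:lsquare} are bilinear in $b$, this reduction is justified.

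On $X^k$, the compositions $d^+d^-$ and $d^-d^+$ are precisely the two square-flop diagrams appearing in Lemma~\ref{squarefloplem}. With $b - a = \l_i$ the indexing condition $p+q+r = b-a+1-\l_i$ collapses to $p+q+r = 1$, yielding exactly three terms: an $h_1$-dot on the $\cal{E}$-strand, an $h_1$-dot on the $\cal{F}$-strand, and a counter-clockwise bubble $b_i$ of degree two in the middle region. The combined sign $(-1)^{\l_i+1}\cdot(-1)^{a-b} = -1$ arises from the normalization of $d^+$ in~\eqref{eq:dab} and the prefactor in Lemma~\ref{squarefloplem}. On the other side, $\Id_{\tau_i\1_\l}\cdot b_j$ places $b_j$ in the inner $\l$-region while $s_i(b_j)\cdot \Id_{\tau_i\1_\l}$ expands, by definition of the Weyl action on bubbles, as $b_j - (b_j,\alpha_i)_Q\, b_i$ in the outer $s_i(\l)$-region. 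Thus the RHS on $X^k$ is the difference of outer and inner copies of $b_j$, minus $(b_j,\alpha_i)_Q$ times the outer $b_i$. The identity reduces to a bubble-transport computation: slide $b_j$ across $\cal{F}_i^{(b)}\cal{E}_i^{(a)}$ and compare the resulting correction with $(b_j,\alpha_i)_Q$ times the three-term square-flop expression.

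Three cases arise according to the type of the pair $(i,j)$, each handled by a separate proposition. If $(\alpha_i,\alpha_j)=0$, both sides vanish: $(b_j,\alpha_i)_Q = 0$, $s_i(b_j)=b_j$, and a $j$-bubble commutes freely past $i$-strands, so the outer and inner copies of $b_j$ agree. If $j=i$, apply~\eqref{eq:bub-slide} and~\eqref{eq:ccbub-slide} to slide $b_i$ across each strand; the resulting corrections consist of $\varepsilon_1$-dots and residual bubbles, and the identity~\eqref{eq:dot4bub} reassembles these to match exactly $(b_i,\alpha_i)_Q = 2$ copies of the square-flop expression. Finally, if $(\alpha_i,\alpha_j)=-1$, apply~\eqref{eq:jslide}--\eqref{eq:X0} together with Lemma~\ref{lem:dots-j}; the prefactor $t_{ij}^{-1}t_{ji} = v_{ji}$ produced by the mixed bubble slide of a $j$-bubble past an $i$-strand matches the scalar $(b_j,\alpha_i)_Q = v_{ji}$, and the $\varepsilon_1$-terms in Lemma~\ref{lem:dots-j} provide exactly the two $h_1$-dot contributions to the square-flop.

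The main obstacle is the bookkeeping in the $j=i$ case, where one must combine the two strand crossings consistently, reconcile the ``ccw-minus-cw'' form of~\eqref{eq:dot4bub} with the ccw-bubble output of Lemma~\ref{squarefloplem}, apply the fundamental relation between complete and elementary symmetric functions as in the proof of Lemma~\ref{lem:A2}, and uniformly treat both the $\l_i \geq 0$ and $\l_i \leq 0$ branches of the Rickard complex. Once this telescoping is verified, the theorem follows as an immediate corollary of the three case-specific propositions.
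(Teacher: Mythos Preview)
Your proposal is correct and follows essentially the same approach as the paper: the paper states the theorem as ``an immediate corollary of the subsequent three Propositions,'' which are exactly your three cases $(\alpha_i,\alpha_j)\in\{2,-1,0\}$, each proved with the same ingredients you cite (Lemma~\ref{squarefloplem} combined with Lemma~\ref{lem:A2} and~\eqref{eq:dot4bub} for $j=i$; Lemma~\ref{lem:dots-j} for $i\cdot j=-1$; the trivial bubble slide~\eqref{eq:Xk} for $i\cdot j=0$). Your sign bookkeeping $(-1)^{\l_i+1}\cdot(-1)^{a-b}=-1$ matches the paper's, and your reduction to generators $b_j$ by bilinearity is implicit in the paper's case split.
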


\begin{proposition} \label{prop:ihomo}
For all $\l$ and $i \in I$ there is a homotopy equivalence
\begin{equation} \label{eq:i-homotopy}
  \Id_{\tau_i\onel} \cdot \left( \;
  \hackcenter{ \begin{tikzpicture} [scale=.7]
 \draw  (1.5,1) arc (360:180:.45cm) [thick];
 \draw[,<-](1.5,1) arc (0:180:.45cm) [thick];
 \filldraw  [black] (.7,.75) circle (2.5pt);
 \node at (.6,.3) {\tiny $\ast+1$};
 \node at (.6,1.55) { $i$};
       \node at (1.8,2.0) { $\lambda $};
\end{tikzpicture}}
  \;\right)
  \;  \simeq \;
\left( \;
  \hackcenter{ \begin{tikzpicture} [scale=.7]
 \draw  (1.5,1) arc (360:180:.45cm) [thick];
 \draw[,<-](1.5,1) arc (0:180:.45cm) [thick];
 \filldraw  [black] (.7,.75) circle (2.5pt);
 \node at (.6,.3) {\tiny $\ast+1$};
 \node at (.6,1.55) { $i$};
       \node at (1.8,2.0) { $s_i(\lambda)$};
\end{tikzpicture}} \; \right) \cdot \Id_{\tau_i\onel}
\end{equation}
given by $2d^-$.
\end{proposition}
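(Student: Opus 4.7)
I will verify the chain homotopy identity term by term on the Rickard complex $\tau_i\onel$. Each summand has the form $X^k := \cal{E}_i^{(a)}\cal{F}_i^{(b)}\onel\la k\ra$ with $b-a = \l_i$, and the claim that $2d^-$ provides a homotopy between $\Id_{\tau_i\onel}\cdot b_i(\l)$ and $b_i(s_i(\l))\cdot\Id_{\tau_i\onel}$ amounts to showing that in $\End(X^k)$ one has
\[
\Id\cdot b_i(\l) \;-\; b_i(s_i(\l))\cdot\Id \;=\; d^+\circ(2d^-) \;+\; (2d^-)\circ d^+.
\]

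For the left-hand side, I use bubble slides. The leftmost, middle, and rightmost regions of the diagram for $X^k$ carry weights $s_i(\l)$, $\l-b\alpha_i$, and $\l$ respectively. Equation \eqref{eq:dot4bub} expresses $\varepsilon_1 = h_1$ on a thick $\cal{E}$-strand as $\pm\tfrac12$ of the difference of the counterclockwise bubbles on its two sides, with an opposite overall sign for a $\cal{F}$-strand. Applying these identities in turn moves both $b_i(\l)$ and $b_i(s_i(\l))$ to the middle region, and their difference collapses to an expression of the form $\pm 2\varepsilon_1(\cal{E}_i^{(a)}) \pm 2\varepsilon_1(\cal{F}_i^{(b)})$ acting on $X^k$.

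For the right-hand side, Lemma \ref{squarefloplem} applied with $b-a=\l_i$ forces the index constraint $p+q+r = b-a+1-\l_i$ to collapse to $p+q+r=1$, producing exactly three summands: a single $h_1$ dot on $\cal{E}_i^{(a)}$, a single $h_1$ dot on $\cal{F}_i^{(b)}$, and a $\ast+1$ bubble in the middle region. One further application of \eqref{eq:dot4bub} rewrites this middle-region bubble in terms of an outer-region bubble plus an $\varepsilon_1$ dot, so that after recognizing $h_1 = \varepsilon_1$ the expression $2(d^+d^- + d^-d^+)$ becomes the same $\varepsilon_1$ linear combination obtained on the left-hand side, up to the global $(-1)^{a-b}$ factor from the lemma.

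The main obstacle is sign tracking. The $(-1)^{\l_i+1}$ prefactor in the definition \eqref{eq:dab} of $d^+$, the overall $(-1)^{a-b}$ in Lemma \ref{squarefloplem}, the alternating $(-1)^{x+y}$ in the slide \eqref{eq:bub-slide}, and the orientation-dependent sign for sliding across $\cal{F}$ rather than $\cal{E}$ must all conspire to produce exactly the factor $2$ with no stray sign. To manage this I would first fix conventions by checking the base case $\l_i=0$ (where $a=b$, the constant term of $d^+$ simplifies, and a direct computation is feasible), then observe that on both sides of the identity the remaining $\l_i$-dependence is governed by the same underlying dot-movement combinatorics, so agreement propagates to all terms of all Rickard complexes.
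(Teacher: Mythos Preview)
Your approach is essentially correct and follows the same route as the paper: both arguments rest on Lemma~\ref{squarefloplem} (the square flop) together with the dot--bubble conversion \eqref{eq:dot4bub}. The paper's organization differs slightly in that it first applies Lemma~\ref{lem:A2} to the output of Lemma~\ref{squarefloplem}, which moves the interior bubble to the outer (left) region in one stroke; this turns the $p+q+r=1$ sum directly into the three terms $\{\text{outer bubble}, \varepsilon_1 \text{ on } \cal{E}^{(a)}, h_1 \text{ on } \cal{F}^{(b)}\}$, after which a single use of \eqref{eq:dot4bub} finishes. Your plan leaves the bubble in the middle region and then moves it out, which is the same computation but with the bubble slide deferred; invoking Lemma~\ref{lem:A2} would make this step cleaner.

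One caution: your final ``propagation'' heuristic (check $\l_i=0$, then infer the general case) is neither needed nor valid as stated. The diagrammatic identity you are proving is uniform in $(a,b)$; the signs $(-1)^{\l_i+1}$ from \eqref{eq:dab} and $(-1)^{a-b}$ from Lemma~\ref{squarefloplem} combine to a fixed $-1$ independent of $\l_i$, so the same computation works for every term of every Rickard complex simultaneously. You should just track the signs once and be done.
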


\begin{proof}
Lemmas \ref{squarefloplem} and \ref{lem:A2} with $b-a=\lambda_i$ imply
\begin{align}
& 2 d_{(a-1,b-1)}^+ d_{(a,b)}^- +  2 d_{(a+1,b+1)}^-  d_{(a,b)}^+ \;\;  = \;\;
2(-1)^{\l_i+1}  (-1)^{-\l_i}
\sum_{ q+y+z=1}
(-1)^{y}
\hackcenter{ \begin{tikzpicture} [scale=.7]
\draw[thick, double, ->] (.5,0) to (.5,2.5) ;
   \filldraw  [black] (.5,1.5) circle (2.5pt);
 \node at (.1,1.55) { $\varepsilon_y $};
  \node at (.5,-.2) { $a $};
      \node at (.3,.3) { $i $};
      \node at (1.8,.5) { $i $};
\draw[thick, double, <-] (1.5,0) to (1.5,2.5) ;
 \filldraw  [black] (1.5,1.5) circle (2.5pt);
    \node at (1.9,1.5) { $h_q $};
    \node at (1.5,-.2) {$b$};
 \draw  (-.5,1) arc (360:180:.45cm) [thick];
 \draw[,<-](-.5,1) arc (0:180:.45cm) [thick];
    \filldraw  [black] (-1.3,.75) circle (2.5pt);
     \node at (-.9,1.65) { $i $};
     \node at (-1.2,.3) {\tiny $\ast+z$};
 %\node at (1.15, 2.2) { $\lambda-b\alpha_i $};
       \node at (2.2,2.2) { $\lambda $};
\end{tikzpicture}}
\\ \label{eq:i-homotopyA}
& \qquad   = \;\;- 2
\hackcenter{ \begin{tikzpicture} [scale=.7]
\draw[thick, double, ->] (.5,0) to (.5,2.5) ;
  \node at (.5,-.2) { $a $};
      \node at (.3,.3) { $i $};
      \node at (1.8,.5) { $i $};
\draw[thick, double, <-] (1.5,0) to (1.5,2.5) ;
    \node at (1.5,-.2) {$b$};
 \draw  (-.5,1) arc (360:180:.45cm) [thick];
 \draw[,<-](-.5,1) arc (0:180:.45cm) [thick];
    \filldraw  [black] (-1.3,.75) circle (2.5pt);
     \node at (-.9,1.65) { $i $};
     \node at (-1.2,.3) {\tiny $\ast+1$};
 %\node at (1.15, 2.2) { $\lambda-b\alpha_i $};
       \node at (2.2,2.2) { $\lambda $};
\end{tikzpicture}}
 - \; 2   \left(\;
\hackcenter{ \begin{tikzpicture} [scale=.7]
\draw[thick, double, ->] (.5,0) to (.5,2.5) ;
%   \filldraw  [black] (.5,1.5) circle (2.5pt);
% \node at (.1,1.55) { $\varepsilon_1 $};
  \node at (.5,-.2) { $a $};
      \node at (.3,.3) { $i $};
      \node at (1.8,.5) { $i $};
\draw[thick, double, <-] (1.5,0) to (1.5,2.5) ;
 \filldraw  [black] (1.5,1.5) circle (2.5pt);
    \node at (1.9,1.5) { $\varepsilon_1 $};
    \node at (1.5,-.2) {$b$};
 %\node at (1.15, 2.2) { $\lambda-b\alpha_i $};
       \node at (2.2,2.2) { $\lambda $};
\end{tikzpicture}}
\;  -  \; \hackcenter{ \begin{tikzpicture} [scale=.7]
\draw[thick, double, ->] (.5,0) to (.5,2.5) ;
   \filldraw  [black] (.5,1.5) circle (2.5pt);
 \node at (.1,1.55) { $\varepsilon_1 $};
  \node at (.5,-.2) { $a $};
      \node at (.3,.3) { $i $};
      \node at (1.8,.5) { $i $};
\draw[thick, double, <-] (1.5,0) to (1.5,2.5) ;
%% \filldraw  [black] (1.5,1.5) circle (2.5pt);
%%    \node at (1.9,1.5) { $h_q $};
    \node at (1.5,-.2) {$b$};
 %\node at (1.15, 2.2) { $\lambda-b\alpha_i $};
       \node at (2.2,2.2) { $\lambda $};
\end{tikzpicture}}
\; \right)
\end{align}
(with $(-1)^{\l_i+1}$ coming from the differential and $(-1)^{-\l_i}$ coming from (the square flop) Lemma~\ref{squarefloplem}).
The result then follows using \eqref{eq:dot4bub} since
\begin{align}
& \hackcenter{ \begin{tikzpicture} [scale=.7]
\draw[thick, double, ->] (-1.5,0) to (-1.5,2.5) ;
  \node at (-1.5,-.2) { $a $};
      \node at (-1.7,.5) { $i $};
\draw[thick, double, <-] (0,0) to (0,2.5) ;
\filldraw  [black] (0,1.5) circle (2.5pt);
 \node at (-.4,1.5) { $\varepsilon_1 $};
  \node at (0,-.2) { $b$};
      \node at (-.2,.5) { $i $};
 %
% \draw[blue]  (1.5,1) arc (360:180:.45cm) [thick];
% \draw[blue,<-](1.5,1) arc (0:180:.45cm) [thick];
% \filldraw  [black] (.7,.75) circle (2.5pt);
% \node at (1.0,.3) {\tiny $\ast+1$};
 %  \node at (1.4,1.55) { $i $};
 \node at (.9, 2.2) { $\lambda $};
\end{tikzpicture}}
\; - \;
\hackcenter{ \begin{tikzpicture} [scale=.7]
\draw[thick, double, ->] (-1.5,0) to (-1.5,2.5) ;
  \node at (-1.5,-.2) { $a $};
      \node at (-1.7,.5) { $i $};
\draw[thick, double, <-] (0,0) to (0,2.5) ;
\filldraw  [black] (-1.5,1.5) circle (2.5pt);
 \node at (-1.9,1.5) { $\varepsilon_1 $};
  \node at (0,-.2) { $b$};
      \node at (-.2,.5) { $i $};
 %
% \draw[blue]  (1.5,1) arc (360:180:.45cm) [thick];
% \draw[blue,<-](1.5,1) arc (0:180:.45cm) [thick];
% \filldraw  [black] (.7,.75) circle (2.5pt);
% \node at (1.0,.3) {\tiny $\ast+1$};
 %  \node at (1.4,1.55) { $i $};
 \node at (.9, 2.2) { $\lambda $};
\end{tikzpicture}}
  = \;\;
\frac{1}{2}   \left( \;
\hackcenter{ \begin{tikzpicture} [scale=.7]
\draw[thick, double, ->] (-1.5,0) to (-1.5,2.5) ;
  \node at (-1.5,-.2) { $a $};
      \node at (-1.7,.5) { $i $};
\draw[thick, double, <-] (0,0) to (0,2.5) ;
  \node at (0,-.2) { $b$};
      \node at (-.2,.5) { $i $};
 \draw[black]  (1.5,1) arc (360:180:.45cm) [thick];
 \draw[black,<-](1.5,1) arc (0:180:.45cm) [thick];
 \filldraw  [black] (.7,.75) circle (2.5pt);
 \node at (1.0,.3) {\tiny $\ast+1$};
   \node at (1.4,1.55) { $i $};
 \node at (.9, 2.2) { $\lambda $};
\end{tikzpicture}}
\;\; - \;\;
\hackcenter{ \begin{tikzpicture} [scale=.7]
\draw[thick, double, ->] (-1.5,0) to (-1.5,2.5) ;
  \node at (-1.5,-.2) { $a $};
      \node at (-1.7,.5) { $i $};
\draw[thick, double, <-] (0,0) to (0,2.5) ;
  \node at (0,-.2) { $b$};
      \node at (-.2,.5) { $i $};
 \draw[black]  (-2.5,1) arc (360:180:.45cm) [thick];
 \draw[black,<-](-2.5,1) arc (0:180:.45cm) [thick];
 \filldraw  [black] (-3.3,.75) circle (2.5pt);
 \node at (-3.0,.3) {\tiny $\ast+1$};
      \node at (-2.4,1.55) { $i $};
 \node at (.9, 2.2) { $\lambda $};
\end{tikzpicture}}
\; \right)
\end{align}
so that \eqref{eq:i-homotopyA} is equal to $(s_i (b_i))\Id_{\cal{E}_i^{(a)}\cal{F}_i^{(b)}\onel} - \Id_{\cal{E}_i^{(a)}\cal{F}_i^{(b)}\onel} b_i$.
\end{proof}

\begin{proposition}
For all $\l$ and $i,j \in I$ such that $i \cdot j = -1$ there is a homotopy equivalence
\begin{equation} \label{eq:i-homotopy}
 \Id_{\tau_i\onel} \cdot \left( \;
  \hackcenter{ \begin{tikzpicture} [scale=.7]
 \draw[blue]  (1.5,1) arc (360:180:.45cm) [thick];
 \draw[blue,<-](1.5,1) arc (0:180:.45cm) [thick];
 \filldraw  [black] (.7,.75) circle (2.5pt);
 \node at (.6,.3) {\tiny $\ast+1$};
 \node at (.6,1.55) { $j$};
       \node at (1.8,2.0) { $\lambda $};
\end{tikzpicture}}
  \;\right)
  \;  \simeq \;
\left(
 \hackcenter{ \begin{tikzpicture} [scale=.7]
 \draw[blue]  (1.5,1) arc (360:180:.45cm) [thick];
 \draw[blue,<-](1.5,1) arc (0:180:.45cm) [thick];
 \filldraw  [black] (.7,.75) circle (2.5pt);
 \node at (.6,.3) {\tiny $\ast+1$};
 \node at (.6,1.55) { $j$};
       \node at (1.8,2.0) { $s_i(\lambda)$};
\end{tikzpicture}}
\; - \;t_{ij} t_{j i}^{-1}
 \hackcenter{ \begin{tikzpicture} [scale=.7]
 \draw  (1.5,1) arc (360:180:.45cm) [thick];
 \draw[,<-](1.5,1) arc (0:180:.45cm) [thick];
 \filldraw  [black] (.7,.75) circle (2.5pt);
 \node at (.6,.3) {\tiny $\ast+1$};
 \node at (.6,1.55) { $i$};
       \node at (1.8,2.0) { $s_i(\lambda)$};
\end{tikzpicture}}
\right) \cdot \Id_{\tau_i\onel}
\end{equation}
given by $t_{ji}^{-1} t_{ij} d^- = v_{ji} d^-$.
\end{proposition}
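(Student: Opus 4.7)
The plan is to follow the strategy of the proof of Proposition \ref{prop:ihomo} verbatim up to the last step, and then replace the final invocation of \eqref{eq:dot4bub} by Lemma \ref{lem:dots-j}, which is tailored to the $i \cdot j = -1$ case. Concretely, I want to verify that $v_{ji} d^-$ is a chain homotopy between the chain maps $\Id_{\tau_i\onel} \cdot b_j(\lambda)$ and $\bigl(b_j(s_i\lambda) - v_{ji} b_i(s_i\lambda)\bigr) \cdot \Id_{\tau_i\onel}$; that is, the sum $d^+(v_{ji} d^-) + (v_{ji} d^-) d^+$ equals the difference of these two chain maps.

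First I would apply the square-flop identity of Lemma \ref{squarefloplem} to $v_{ji}\bigl(d^+_{(a-1,b-1)} d^-_{(a,b)} + d^-_{(a+1,b+1)} d^+_{(a,b)}\bigr)$. Since $b - a = \lambda_i$, the constraint $p+q+r = b-a+1-\lambda_i$ becomes $p+q+r = 1$, so only three terms survive. Combining the sign $(-1)^{\lambda_i+1}$ coming from each $d^+$ with the factor $(-1)^{a-b} = (-1)^{-\lambda_i}$ from Lemma \ref{squarefloplem}, and then simplifying via Lemma \ref{lem:A2} with $\beta = 1$ exactly as in the proof of Proposition \ref{prop:ihomo}, the three-term sum collapses to
\begin{equation*}
v_{ji}\bigl(d^+ d^- + d^- d^+\bigr) \;=\; -v_{ji}\Bigl[(\varepsilon_1 \text{ on } \cal{F}_i^{(b)}) - (\varepsilon_1 \text{ on } \cal{E}_i^{(a)}) + (i\text{-bubble of degree }\ast+1\text{ on the far left})\Bigr].
\end{equation*}
At this point, Proposition \ref{prop:ihomo} concluded by applying \eqref{eq:dot4bub} to convert the $\varepsilon_1$-dots into $i$-bubbles. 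Here I instead invoke Lemma \ref{lem:dots-j}, which rewrites the difference $(\varepsilon_1 \text{ on } \cal{F}_i^{(b)}) - (\varepsilon_1 \text{ on } \cal{E}_i^{(a)})$ as $t_{ij}^{-1} t_{ji} = v_{ij}$ times the difference of a $j$-bubble of degree $\ast+1$ on the right and on the left. Because $v_{ji} \cdot v_{ij} = 1$, the outer scalar $v_{ji}$ cancels the coefficient from Lemma \ref{lem:dots-j}, producing the $j$-bubble on the left (region $s_i \lambda$) minus the $j$-bubble on the right (region $\lambda$), together with the residual term $-v_{ji}$ times the $i$-bubble on the far left (region $s_i \lambda$). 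This is precisely $\bigl(b_j(s_i \lambda) - v_{ji} b_i(s_i \lambda)\bigr)\cdot \Id_{\tau_i\onel} - \Id_{\tau_i\onel}\cdot b_j(\lambda)$, as required.

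The main subtlety is the careful bookkeeping of signs and of the region labels attached to each bubble, so that the Weyl-group identity $s_i(b_j) = b_j - v_{ji} b_i$ in the form displayed in the statement emerges correctly from the coefficient $v_{ij}$ produced by Lemma \ref{lem:dots-j} together with the residual $i$-bubble from the square flop. Apart from this accounting, the computation is a line-by-line adaptation of the proof of Proposition \ref{prop:ihomo}.
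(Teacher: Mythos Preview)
Your proposal is correct and follows essentially the same route as the paper's own proof: apply Lemma~\ref{squarefloplem} and Lemma~\ref{lem:A2} with $\beta=1$ to reduce $v_{ji}(d^+d^-+d^-d^+)$ to the three-term expression, then invoke Lemma~\ref{lem:dots-j} (rather than \eqref{eq:dot4bub}) and use $v_{ji}v_{ij}=1$ to identify the result with $s_i(b_j)\cdot\Id_{\tau_i\onel}-\Id_{\tau_i\onel}\cdot b_j$. The sign and region bookkeeping you describe matches the paper exactly.
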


\begin{proof}
By Lemma~\ref{squarefloplem} and Lemma~\ref{lem:A2}
\begin{align} \nn
& v_{ji} d_{(a-1,b-1)}^+ d_{(a,b)}^-  + v_{ji} d_{(a+1,b+1)}^-  d_{(a,b)}^+  \;\; =\;\;
-v_{ji}
\sum_{ q+y+z=\beta}
(-1)^{y}
\hackcenter{ \begin{tikzpicture} [scale=.7]
\draw[thick, double, ->] (.5,0) to (.5,2.5) ;
   \filldraw  [black] (.5,1.5) circle (2.5pt);
 \node at (.1,1.55) { $\varepsilon_y $};
  \node at (.5,-.2) { $a $};
      \node at (.3,.3) { $i $};
      \node at (1.8,.5) { $i $};
\draw[thick, double, <-] (1.5,0) to (1.5,2.5) ;
 \filldraw  [black] (1.5,1.5) circle (2.5pt);
    \node at (1.9,1.5) { $h_q $};
    \node at (1.5,-.2) {$b$};
 \draw  (-.5,1) arc (360:180:.45cm) [thick];
 \draw[,<-](-.5,1) arc (0:180:.45cm) [thick];
    \filldraw  [black] (-1.3,.75) circle (2.5pt);
     \node at (-.9,1.65) { $i $};
     \node at (-1.2,.3) {\tiny $\ast+z$};
 %\node at (1.15, 2.2) { $\lambda-b\alpha_i $};
       \node at (2.2,2.2) { $\lambda $};
\end{tikzpicture}}
\\
&  \quad  = \;\; -v_{ji}
\hackcenter{ \begin{tikzpicture} [scale=.7]
\draw[thick, double, ->] (.5,0) to (.5,2.5) ;
  \node at (.5,-.2) { $a $};
      \node at (.3,.3) { $i $};
      \node at (1.8,.5) { $i $};
\draw[thick, double, <-] (1.5,0) to (1.5,2.5) ;
    \node at (1.5,-.2) {$b$};
 \draw  (-.5,1) arc (360:180:.45cm) [thick];
 \draw[,<-](-.5,1) arc (0:180:.45cm) [thick];
    \filldraw  [black] (-1.3,.75) circle (2.5pt);
     \node at (-.9,1.65) { $i $};
     \node at (-1.2,.3) {\tiny $\ast+1$};
 %\node at (1.15, 2.2) { $\lambda-b\alpha_i $};
       \node at (2.2,2.2) { $\lambda $};
\end{tikzpicture}}
 - \; v_{ji}   \left(\;
\hackcenter{ \begin{tikzpicture} [scale=.7]
\draw[thick, double, ->] (.5,0) to (.5,2.5) ;
%   \filldraw  [black] (.5,1.5) circle (2.5pt);
% \node at (.1,1.55) { $e_1 $};
  \node at (.5,-.2) { $a $};
      \node at (.3,.3) { $i $};
      \node at (1.8,.5) { $i $};
\draw[thick, double, <-] (1.5,0) to (1.5,2.5) ;
 \filldraw  [black] (1.5,1.5) circle (2.5pt);
    \node at (1.9,1.5) { $\varepsilon_1 $};
    \node at (1.5,-.2) {$b$};
 %\node at (1.15, 2.2) { $\lambda-b\alpha_i $};
       \node at (2.2,2.2) { $\lambda $};
\end{tikzpicture}}
\;  -  \; \hackcenter{ \begin{tikzpicture} [scale=.7]
\draw[thick, double, ->] (.5,0) to (.5,2.5) ;
   \filldraw  [black] (.5,1.5) circle (2.5pt);
 \node at (.1,1.55) { $\varepsilon_1 $};
  \node at (.5,-.2) { $a $};
      \node at (.3,.3) { $i $};
      \node at (1.8,.5) { $i $};
\draw[thick, double, <-] (1.5,0) to (1.5,2.5) ;
%% \filldraw  [black] (1.5,1.5) circle (2.5pt);
%%    \node at (1.9,1.5) { $h_q $};
    \node at (1.5,-.2) {$b$};
 %\node at (1.15, 2.2) { $\lambda-b\alpha_i $};
       \node at (2.2,2.2) { $\lambda $};
\end{tikzpicture}}
\; \right) \nn
\\
& \quad \refequal{\text{Lem \ref{lem:dots-j}}} \;
-v_{ji}
\hackcenter{ \begin{tikzpicture} [scale=.7]
\draw[thick, double, ->] (.5,0) to (.5,2.5) ;
  \node at (.5,-.2) { $a $};
      \node at (.3,.3) { $i $};
      \node at (1.8,.5) { $i $};
\draw[thick, double, <-] (1.5,0) to (1.5,2.5) ;
    \node at (1.5,-.2) {$b$};
 \draw  (-.5,1) arc (360:180:.45cm) [thick];
 \draw[,<-](-.5,1) arc (0:180:.45cm) [thick];
    \filldraw  [black] (-1.3,.75) circle (2.5pt);
     \node at (-.9,1.65) { $i $};
     \node at (-1.2,.3) {\tiny $\ast+1$};
 %\node at (1.15, 2.2) { $\lambda-b\alpha_i $};
       \node at (2.2,2.2) { $\lambda $};
\end{tikzpicture}}
\; - \; v_{ji} v_{ij} \;
\left( \;
\hackcenter{ \begin{tikzpicture} [scale=.7]
\draw[thick, double, ->] (-1.5,0) to (-1.5,2.5) ;
  \node at (-1.5,-.2) { $a $};
      \node at (-1.7,.5) { $i $};
\draw[thick, double, <-] (0,0) to (0,2.5) ;
  \node at (0,-.2) { $b$};
      \node at (-.2,.5) { $i $};
 \draw[blue]  (1.5,1) arc (360:180:.45cm) [thick];
 \draw[blue,<-](1.5,1) arc (0:180:.45cm) [thick];
 \filldraw  [black] (.7,.75) circle (2.5pt);
 \node at (1.0,.3) {\tiny $\ast+1$};
   \node at (1.4,1.55) { $j $};
 \node at (.7, 2.2) { $\lambda $};
\end{tikzpicture}}
\;\; - \;\; \;
\hackcenter{ \begin{tikzpicture} [scale=.7]
\draw[thick, double, ->] (-1.5,0) to (-1.5,2.5) ;
  \node at (-1.5,-.2) { $a $};
      \node at (-1.7,.5) { $i $};
\draw[thick, double, <-] (0,0) to (0,2.5) ;
  \node at (0,-.2) { $b$};
      \node at (-.2,.5) { $i $};
 \draw[blue]  (-2.5,1) arc (360:180:.45cm) [thick];
 \draw[blue,<-](-2.5,1) arc (0:180:.45cm) [thick];
 \filldraw  [black] (-3.3,.75) circle (2.5pt);
 \node at (-3.0,.3) {\tiny $\ast+1$};
      \node at (-2.4,1.55) { $j$};
 \node at (.8, 2.2) { $\lambda $};
\end{tikzpicture}}
\; \right) \nn
\\ \nn
& \quad = (s_i (b_j))\Id_{\cal{E}_i^{(a)}\cal{F}_i^{(b)}\onel} - \Id_{\cal{E}_i^{(a)}\cal{F}_i^{(b)}\onel} b_j
\end{align}
where we used that $v_{ij} v_{ji} = 1$ in the second to last equality.
\end{proof}

\begin{proposition}
For all $\l$ and $i,k \in I$ such that $i \cdot k = 0$ the following maps are equal (and in particular homotopic)
\begin{equation} \label{eq:i-homotopy}
  \Id_{\tau_i\onel} \cdot \left( \;
  \hackcenter{ \begin{tikzpicture} [scale=.7]
 \draw[green]  (1.5,1) arc (360:180:.45cm) [thick];
 \draw[green,<-](1.5,1) arc (0:180:.45cm) [thick];
 \filldraw  [black] (.7,.75) circle (2.5pt);
 \node at (.6,.3) {\tiny $\ast+1$};
 \node at (.6,1.55) { $k$};
       \node at (1.8,2.0) { $\lambda $};
\end{tikzpicture}}
  \;\right)
  \;  \simeq \;
\left(
% t_{k i}^{\l_i}
 \hackcenter{ \begin{tikzpicture} [scale=.7]
 \draw[green]  (1.5,1) arc (360:180:.45cm) [thick];
 \draw[green,<-](1.5,1) arc (0:180:.45cm) [thick];
 \filldraw  [black] (.7,.75) circle (2.5pt);
 \node at (.6,.3) {\tiny $\ast+1$};
 \node at (.6,1.55) { $k$};
       \node at (2.0,2.0) { $s_i(\lambda) $};
\end{tikzpicture}}
\right) \cdot \Id_{\tau_i\onel} .
\end{equation}
\end{proposition}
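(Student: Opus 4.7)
The plan is to prove the stronger statement that the two $2$-morphisms are literally equal, not merely homotopic. First, I would observe that the hypothesis $i \cdot k = 0$ makes things collapse: by the definition of the bilinear form $(b_k,\alpha_i)_Q = (-1)^{0} v_{ki}(\alpha_k,\alpha_i) = 0$, so $s_i(b_k) = b_k$, which matches the claim that the bubble appearing on the two sides of the complex is the ``same'' bubble (only the region label differs). This is why no correction term appears on the right hand side, in contrast with the two previous propositions.

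Next I would reduce the claim to a purely local statement: each term of the Rickard complex $\tau_i \onel$ is of the form $\cal{E}_i^{(a)} \cal{F}_i^{(b)} \onel$, and each differential $d^+_{(a,b)}$ of \eqref{eq:dab} is built from $i$-coloured cups between thick $i$-strands. So to prove equality of $2$-morphisms of complexes, it suffices to check that a degree-$2$ bubble of colour $k$ can slide through a thick strand of colour $i$ without any correction (and automatically commutes past $i$-coloured cups and caps). This reduces the proposition to showing that
\[
\hackcenter{ \begin{tikzpicture} [scale=.7]
\draw[thick, double, ->] (0,0) to (0,2.5) ;
  \node at (0,-.2) { $a $};
      \node at (-.2,.4) { $i $};
      \node at (1.4,1.55) { $k $};
 \draw[green]  (1.5,1) arc (360:180:.45cm) [thick];
 \draw[green,<-](1.5,1) arc (0:180:.45cm) [thick];
 \filldraw  [black] (.7,.75) circle (2.5pt);
 \node at (1.0,.3) {\tiny $\ast+1$};
 \node at (1.7, 2.2) { $\lambda $};
\end{tikzpicture}}
\;\; = \;\;
\hackcenter{ \begin{tikzpicture} [scale=.7]
\draw[thick, double, ->] (2.5,0) to (2.5,2.5) ;
  \node at (2.5,-.2) { $a $};
  \node at (2.7,.4) { $i $};
 \draw[green]  (1.7,1) arc (360:180:.45cm) [thick];
 \draw[green,<-](1.7,1) arc (0:180:.45cm) [thick];
 \filldraw  [black] (.9,.75) circle (2.5pt);
 \node at (1.1,.3) {\tiny $\ast+1$};
 \node at (1.6,1.55) { $k $};
 \node at (3.25, 2.2) { $\lambda $};
\end{tikzpicture}}
\]
and the analogous identity for a downward strand (replacing $\cal{E}_i^{(a)}$ by $\cal{F}_i^{(b)}$).

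Such an identity is then a direct consequence of the KLR and mixed relations at $(\alpha_i,\alpha_k)=0$. Indeed, \eqref{eq_r2_ij-gen-cyc} for the case $(\alpha_i,\alpha_j)=0$ says that an $i$-$k$ double crossing is $t_{ik}$ times the identity, and the mixed relations \eqref{mixed_rel-cyc} imply similarly that sideways $i$-$k$ double crossings are scalar multiples of the identity. Therefore a thin $k$-coloured strand slides freely past a thin $i$-coloured strand up to an invertible scalar, and via the thick calculus definition of splitters/mergers this propagates to thick strands (where the scalars appear with equal and opposite powers on either side of the splitter/merger and cancel). Applying this slide twice, once on the upward part and once on the downward part of the closed $k$-loop, lets us pass the entire bubble through a thick $i$-strand with no net coefficient.

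The only ``step to be careful about'' is bookkeeping of the scalars $t_{ik}$ and $t_{ki}$ coming from the thick calculus: one has to check that sliding the left and right sides of the closed $k$-loop produces reciprocal scalars, so that they cancel. This is a routine consequence of $t_{ik}\in \Bbbk^{\times}$ and the fact that the bubble is a closed loop, but it is the one verification that warrants explicit attention. Once this is checked, the equality of $2$-morphisms holds termwise on $\tau_i \onel$ and is trivially compatible with the $i$-coloured differentials $d^+_{(a,b)}$, giving the claimed equality (and hence a fortiori the claimed homotopy equivalence).
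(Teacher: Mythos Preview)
Your proposal is correct and takes essentially the same approach as the paper: both reduce the claim to the fact that a $k$-coloured degree-two bubble slides freely through thick $i$-strands when $i\cdot k=0$. The paper's proof is even terser---it simply records the two bubble-slide identities \eqref{eq:Xk} and declares the result immediate---whereas you additionally sketch why those slides hold from the KLR quadratic relation \eqref{eq_r2_ij-gen-cyc} and the mixed relations \eqref{mixed_rel-cyc}.
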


\begin{proof}
This is immediate from the bubble slide rules
\begin{equation} \label{eq:Xk}
\hackcenter{ \begin{tikzpicture} [scale=.7]
\draw[thick, double, ->] (2.5,0) to (2.5,2.5) ;
  \node at (2.5,-.2) { $a $};
  \node at (2.7,.4) { $i $};
 \draw[green]  (1.7,1) arc (360:180:.45cm) [thick];
 \draw[green,<-](1.7,1) arc (0:180:.45cm) [thick];
 \filldraw  [black] (.9,.75) circle (2.5pt);
 \node at (1.1,.3) {\tiny $\ast+1$};
 \node at (1.6,1.65) { $k $};
 \node at (3.25, 2.2) { $\lambda $};
\end{tikzpicture}}
=\;\;
\hackcenter{ \begin{tikzpicture} [scale=.7 ]
\draw[thick, double, ->] (0,0) to (0,2.5) ;
% \filldraw  [black] (0,1.5) circle (2.5pt);
% \node at (-.4,1.5) { $e_x $};
  \node at (0,-.2) { $a $};
      \node at (-.2,.4) { $i $};
 \draw[green]  (1.5,1) arc (360:180:.45cm) [thick];
 \draw[green,<-](1.5,1) arc (0:180:.45cm) [thick];
 \filldraw  [black] (.7,.75) circle (2.5pt);
          \node at (1.4,1.65) { $k$};
 \node at (1.0,.3) {\tiny $\ast+1$};
 \node at (1.15, 2.2) { $\lambda $};
\end{tikzpicture}}
\qquad \quad
\hackcenter{ \begin{tikzpicture} [scale=.7 ]
\draw[thick, double, <-] (0,0) to (0,2.5) ;
% \filldraw  [black] (0,1.5) circle (2.5pt);
% \node at (-.4,1.5) { $e_x $};
  \node at (0,-.2) { $b $};
      \node at (-.2,.55) { $i $};
 \draw[green]  (1.5,1) arc (360:180:.45cm) [thick];
 \draw[green,<-](1.5,1) arc (0:180:.45cm) [thick];
 \filldraw  [black] (.7,.75) circle (2.5pt);
          \node at (1.4,1.65) { $k$};
 \node at (1.0,.3) {\tiny $\ast+1$};
 \node at (1.15, 2.2) { $\lambda $};
\end{tikzpicture}}
\; \; =\;\;
\hackcenter{ \begin{tikzpicture} [scale=.7]
\draw[thick, double, <-] (2.5,0) to (2.5,2.5) ;
  \node at (2.5,-.2) { $b $};
  \node at (2.7,.55) { $i $};
 \draw[green]  (1.7,1) arc (360:180:.45cm) [thick];
 \draw[green,<-](1.7,1) arc (0:180:.45cm) [thick];
 \filldraw  [black] (.9,.75) circle (2.5pt);
 \node at (1.1,.3) {\tiny $\ast+1$};
 \node at (1.6,1.65) { $k $};
 \node at (3.25, 2.2) { $\lambda $};
\end{tikzpicture}}
\end{equation}
so that no homotopy is required.
\end{proof}

\subsection{Some remarks}
There is a natural internal braid group action on the quantum group that interacts with the braid group action on integrable modules that is categorified by Rickard complexes and which generalizes the action on bubbles discussed above.  In \cite{AL-braid} $2$-functors $\cal{T}_i$ categorifying the internal braid group action were defined from $\cal{U}_Q$ to its homotopy category of complexes $\Kom(\cal{U}_Q)$.  It was shown in \cite{AL-braid2} that these $2$-functors interact with Rickard complexes giving $2$-natural isomorphisms
\begin{equation}
  \beth\maps \tau_{i} \onelp (-)\onel \To \cal{T}_{i}(-)\tau_{i}\onel
\end{equation}
where $(-)$ denotes an input from $\cal{U}_Q$.  This amounts to defining for each one morphism $\onelp x\onel \in \Kom(\Ucat_Q)$ a chain homotopy equivalence
\begin{equation}
 \beth_{x\onel} \maps \tau_{i} \onelp x\onel \To \cal{T}_{i}(x\onel)\tau_{i}\onel,
\end{equation}
and for each $2$-morphism $f \maps \onelp x\onel \to \onelp y\onel$ in $\Kom(\Ucat_Q)$ a chain map $\cal{T}_i(f) \maps \cal{T}_i(\onelp x\onel) \to \cal{T}_i(\onelp y\onel)$ giving a commutative diagram
\begin{equation}\label{eq:comm-square}
\xy
 (-20,8)*+{\tau_{i}y \onel}="tl";
 (20,8)*+{\cal{T}_i(y\onel)\tau_i\onel}="tr";
 (-20,-8)*+{\tau_{i}x \onel}="bl";
 (20,-8)*+{\cal{T}_i(x\onel)\tau_{i} \onel}="br";
    {\ar^{\Id_{\tau_{i}}f} "bl";"tl"};
    {\ar_{\beth_{x\onel}} "bl";"br"};
    {\ar_{\cal{T}_{i}(f)\Id_{\tau_{i}\onel}} "br";"tr"};
    {\ar^{\beth_{y\onel}} "tl";"tr"};
\endxy
\end{equation}
 in $\Kom(\Ucat_Q)$.  For identity 1-morphisms $\onel$, the maps $\beth_{\onel}$ are identities since $\cal{T}_i(\onel) = \1_{s_i(\l)}$ and $\tau_i \onel = \1_{s_i(\l)}\tau_i$.   One can show that the braid group action via 2-functors $\cal{T}_i$ on the space of endomorphisms $\bigoplus_{\l}\End^2(\onel)$ factors through the corresponding Weyl group, so that $\cal{T}_i^2 = \Id$ on this space. A bubble $b_j(\l)$ defines a degree $2$ endomorphism  of $\onel$ and the homotopy for the naturality square above defines a chain homotopy from $\tau_i\onel b_j(\l)$ to $\cal{T}_i(b_j(\l))\tau_i\onel$.  This homotopy equivalence agrees with the homotopies defined in Theorem~\ref{squareflopcor}.

From this perspective, there is nothing special about degree $2$ bubbles.  One can similarly show that the braid group action induced by the 2-functors $\cal{T}_i$ on $\bigoplus_{\l}\End^{2k}(\onel)$ factors through the Weyl group and the commutative square \eqref{eq:comm-square} gives homotopies from $\tau_i\onel x \onel$ to $\cal{T}_i(x\onel)\tau_i\onel$ for any 2-morphisms $x\onel \in \End^{2k}(\onel)$.  Further, the homotopies for degree $2k$ bubbles can be chosen so that they square to zero, so they naturally fit into the framework of curved complexes studied in the next section.

\section{Curved Rickard complexes} \label{sec-curved}

\subsection{2-categories and curved complexes} \label{subsecgencurved}

In this subsection we review some definitions and properties of curved complexes in the context of $2$-categories.

Let us fix a $\Bbbk$-linear $2$-category $\cal{K}$. Recall that $\Kom(\cal{K})$ is used to denote the homotopy category of complexes in $\cal{K}$. The objects here are the same but the $1$-morphisms are complexes $(V,\Delta)$ of $1$-morphisms in $\cal{K}$
\begin{equation}\label{eq:seq}
V_1 \xrightarrow{\Delta} V_2 \xrightarrow{\Delta} \dots \xrightarrow{\Delta} V_n
\end{equation}
which we consider up to homotopy equivalence. One can rewrite this more compactly as $V = \oplus_i V_i [-i]$, where the $[\cdot]$ denotes a shift in homological grading, and $\Delta: V \to V[1]$. Notice that, by definition, $\Delta^2=0$ in this case. We now explain how one can relax this condition to obtain the homotopy category $\tKom(\cal{K})[u]$ of curved complexes.

First, for two objects $A,B \in \cal{K}$ we denote by ${}_B\cal{K}_A := \Hom_\cal{K}(A,B)$ the $1$-category of maps between $A$ and $B$. This category is naturally an $(\End(\1_B),\End(\1_A))$-bilinear category. Here $\1_A$ is the identify functor of $A$ and $\End(\1_A)$ is its endomorphism algebra (and similarly for $\1_B$).

For $A,B \in \cal{K}$ as above, let $(z_B,z_A) \in (\End^2(\1_B),\End^2(\1_A))$.  We would like to consider a sequence of maps
\begin{equation}\label{eq:seq2}
V_1 \substack{\xrightarrow{\Delta^+} \\ \xleftarrow[\Delta^-]{}} V_2 \substack{\xrightarrow{\Delta^+} \\ \xleftarrow[\Delta^-]{}} \dots \substack{\xrightarrow{\Delta^+} \\ \xleftarrow[\Delta^-]{}} V_n
\end{equation}
such that $(\Delta^+ + \Delta^-)^2 = z_B 1_V -1_V z_A$.
We will often abuse notation and write $z_B 1_V -1_V z_A$ simply as $z_B-z_A$.
This is all fine except that the maps $\Delta^-$ have homological degree $-1$ instead of $+1$.

To fix this we enlarge our $2$-category by introducing a formal variable $u$ of homological degree $2$.  More precisely, for any $\Bbbk$-linear $2$-category $\cD$ we can consider the $2$-category $\cD[u]$ where the objects and $1$-morphisms are the same but where the $2$-morphisms are formally tensored with $\Bbbk[u]$. In other words, if $V,V'$ are $1$-morphisms then
$$\Hom_{\cD[u]}(V,V') = \Hom_{\cD}(V,V') \otimes_{\Bbbk} \Bbbk[u].$$
In particular, any degree zero map in $\cD[u]$ can be written as a sum $\phi = \sum_{i \geq 0}\phi_i u^i$ for some maps $\phi_i$ of degree $-2i$ in $\cD$.
In this language, we can combine the maps in (\ref{eq:seq2}) as
$$\Delta := \Delta^+ + u \Delta^- : V \mapsto V[1].$$

\begin{definition} \label{defoffac}
Using the setup above, a $(z_B,z_A)$-factorization in ${}_B\cal{K}_A$ is a map $\Delta: V \mapsto V[1]$ such that $\Delta^2 = (z_B-z_A)u$. Such a pair $(V,\Delta)$ is called a curved complex in ${}_B\cal{K}_A$ with curvature $(z_B,z_A)$ and connection $\Delta$.
\end{definition}

\begin{remark}
In \cite{GHy}, the construction analogous to Definition \ref{defoffac} would be called a {\it strict} $y$-ification.
\end{remark}

\begin{definition}
For two $(z_B,z_A)$-factorizations $(V,\Delta)$ and $(V',\Delta')$, a morphism $f \colon (V,\Delta) \rightarrow (V', \Delta')$ is a morphism (which we always assume to be degree zero) $f \colon V \rightarrow V'$ such that $f \circ \Delta = \Delta' \circ f$.
Two morphisms $f,g \colon (V,\Delta) \rightarrow (V', \Delta')$ of $(z_B,z_A)$-factorizations are homotopic if there exists $H \colon (V,\Delta) \rightarrow (V', \Delta')$ such that $f-g=H \circ \Delta + \Delta' \circ H$. We will denote the homotopy category of factorizations in $\cal{K}$ by $\tKom(\cal{K})[u]$.
\end{definition}

For objects $A,B,C \in \cal{K}$ we have natural maps
$${}_C\cal{K}_B \otimes {}_B\cal{K}_A \to {}_C\cal{K}_A.$$
For objects $(V, \Delta) \in {}_B\cal{K}_A$ and $(V', \Delta') \in {}_C\cal{K}_B$ let us denote the image by $V'*V \in {}_C\cal{K}_A$ (although later we will drop the $*$ in order to simplify notation). It has a connection $\Delta '* \Delta$ defined by $(\Delta '* \Delta) (x' * x)=\Delta'(x') * x + (-1)^i x' * \Delta(x)$ where $x'$ is in homological degree $i$ of $V'$.

\begin{lemma}
If $(V,\Delta) \in {}_B\cal{K}_A$ is a $(z_B,z_A)$-factorization and $(V',\Delta) \in {}_C\cal{K}_B$ is a $(z_C,z_B)$-factorization then $(V'*V,\Delta'*\Delta)$ is a $(z_C,z_A)$-factorization.
\end{lemma}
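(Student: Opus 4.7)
The plan is a direct computation verifying that $(\Delta'*\Delta)^2$ equals $(z_C-z_A)u$ as a $2$-morphism $V'*V\to (V'*V)[2]$. Because $\Delta'*\Delta$ is defined by a graded Leibniz rule on the horizontal composite, the natural strategy is to apply this rule twice and look for cancellation.

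First, I would apply $\Delta'*\Delta$ to $x'*x$ where $x'$ sits in homological degree $i$ of $V'$, obtaining
\[
(\Delta'*\Delta)(x'*x) \;=\; \Delta'(x')*x \;+\; (-1)^i\,x'*\Delta(x).
\]
Applying the rule a second time to each summand (noting that $\Delta'(x')$ sits in degree $i+1$), the two ``cross terms'' are $(-1)^{i+1}\Delta'(x')*\Delta(x)$ and $(-1)^i\Delta'(x')*\Delta(x)$, which cancel. What survives is the expected Leibniz-type formula
\[
(\Delta'*\Delta)^2(x'*x) \;=\; (\Delta')^2(x')*x \;+\; x'*\Delta^2(x).
\]

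Next, I substitute the factorization hypotheses $(\Delta')^2 = (z_C\,1_{V'}-1_{V'}\,z_B)u$ and $\Delta^2 = (z_B\,1_V - 1_V\,z_A)u$. The crucial observation is then a telescoping cancellation: the middle term $1_{V'}\,z_B\,1_V$ appearing in the expansion of $(\Delta')^2*1_V$ is exactly the opposite of the one appearing in $1_{V'}*\Delta^2$. What remains is
\[
(\Delta'*\Delta)^2 \;=\; \bigl(z_C\,1_{V'*V} - 1_{V'*V}\,z_A\bigr)u,
\]
which is precisely the curvature $(z_C-z_A)u$ in the notational convention fixed before Definition \ref{defoffac}. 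Hence $(V'*V, \Delta'*\Delta)$ is a $(z_C,z_A)$-factorization.

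The only mild subtlety is bookkeeping: one needs the grading conventions for $u$ (homological degree $2$), the signs in the Leibniz rule, and the fact that the ``curvatures'' $z_A, z_B, z_C$ live in $\End^2$ of the identity $1$-morphisms (so they commute past the horizontal composition appropriately). No other input is needed beyond these formal properties, so this is the main (and essentially only) obstacle.
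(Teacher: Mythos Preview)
Your proof is correct and follows essentially the same approach as the paper: a direct application of the Leibniz rule twice, cancellation of the cross terms $(-1)^{i+1}\Delta'(x')*\Delta(x)$ and $(-1)^i\Delta'(x')*\Delta(x)$, then substitution of the curvature hypotheses so that the $z_B$ contributions telescope. The paper's computation is line-for-line the same as yours.
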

\begin{proof}
Assume that $x'$ lies in homological degree $i$ of $V'$.
Then we have the following chain of equalities
\begin{align*}
(\Delta'*\Delta)^2 (x' * x) &=(\Delta'*\Delta)(\Delta'(x') * x +(-1)^i x' * \Delta(x)) \\
&= (\Delta')^2(x') * x + (-1)^{i+1} \Delta'(x') * \Delta(x) + (-1)^i \Delta'(x') * \Delta(x)
+ x' * \Delta^2(x) \\
&= (z_C - z_B)u x' * x + (z_B - z_A)u x' * x \\
&= (z_C-z_A)u x' * x.
\end{align*}
\end{proof}

The following result is a straightforward extension of the analogous classical fact in homological algebra.

\begin{proposition} \label{conelem}
\cite[Lemma 2.5]{GHy}
If $f \colon (V_1,\Delta_1) \rightarrow (V_2,\Delta_2) $ is an isomorphism of $(z_B,z_A)$-factorizations in $\tKom(\cal{K})[u] $, then $Cone(f) \cong 0 $ in $\tKom(\cal{K})[u]$.
\end{proposition}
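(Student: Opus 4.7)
The plan is to mimic the classical proof that the mapping cone of a homotopy equivalence is contractible, and to check that the curvature does not obstruct the argument. First I form the cone $C := \mathrm{Cone}(f) = V_1[1] \oplus V_2$ with the standard connection
\[
\Delta_C = \begin{pmatrix} -\Delta_1 & 0 \\ f & \Delta_2 \end{pmatrix}.
\]
Using $\Delta_i^2 = (z_B - z_A) u$ together with $\Delta_2 f = f \Delta_1$ (which holds because $f$ is a morphism of factorizations), a direct matrix computation gives $\Delta_C^2 = (z_B - z_A) u \cdot \Id_C$, so $C$ is itself a $(z_B, z_A)$-factorization. This is the one new ingredient compared with the uncurved setting, and it works precisely because the curvature is central and the off-diagonal terms $\Delta_2 f - f \Delta_1$ vanish.

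Next, unwinding the hypothesis that $f$ is an isomorphism in $\tKom(\cal{K})[u]$, there exist a morphism of factorizations $g : V_2 \to V_1$ (so $g \Delta_2 = \Delta_1 g$) together with degree $-1$ endomorphisms $H_1, H_2$ satisfying
\[
\Id_{V_1} - g f = \Delta_1 H_1 + H_1 \Delta_1, \qquad \Id_{V_2} - f g = \Delta_2 H_2 + H_2 \Delta_2.
\]
To show $\Id_C$ is null-homotopic, I look for a degree $-1$ endomorphism
\[
s = \begin{pmatrix} -H_1 & g \\ c & H_2 \end{pmatrix}
\]
of $C$ with $\Delta_C s + s \Delta_C = \Id_C$. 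Expanding the matrix product, the diagonal entries collapse to $\Id_{V_1}$ and $\Id_{V_2}$ via the homotopy relations, the $(1,2)$ entry vanishes because $g$ is a strict morphism of factorizations, and the $(2,1)$ entry reduces to the single equation
\[
\Delta_2 c - c \Delta_1 = f H_1 - H_2 f.
\]

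Solving this equation for $c$ is the main step. A short calculation using $f \Delta_1 = \Delta_2 f$ together with the two homotopy relations shows that $f H_1 - H_2 f$ is a cocycle, $\Delta_2 (f H_1 - H_2 f) + (f H_1 - H_2 f) \Delta_1 = 0$, so the equation for $c$ is consistent. A concrete $c$ can then be produced by a telescoping combination of $H_2 f H_1$ with correction terms built out of $gf$, $fg$, and the homotopies, exactly as in the classical case. The main obstacle is verifying that every cancellation still works in the curved setting; this holds because the curvature $(z_B - z_A) u$ is central and thus pulls past every map, so all of the identities used in the classical proof go through verbatim. This produces the desired null-homotopy and hence $\mathrm{Cone}(f) \cong 0$ in $\tKom(\cal{K})[u]$.
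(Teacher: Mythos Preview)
The paper does not give its own proof of this proposition; it simply cites \cite[Lemma 2.5]{GHy} with the remark that it is ``a straightforward extension of the analogous classical fact in homological algebra.'' So there is no paper-side argument to compare against.

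Your argument is on the right track: the formation of the cone, the verification that it is again a $(z_B,z_A)$-factorization, the reduction to the single $(2,1)$-equation, and the cocycle check for $\alpha := fH_1 - H_2 f$ are all correct, and your observation that centrality of the curvature makes the $\Hom$-complex between two $(z_B,z_A)$-factorizations an honest complex is exactly the point. The gap is in the final step. With your fixed ansatz $s = \begin{psmallmatrix} -H_1 & g \\ c & H_2 \end{psmallmatrix}$ the $(2,1)$-equation reads $dc = \alpha$, and you assert that $c$ arises from ``a telescoping combination of $H_2 f H_1$ with correction terms.'' But the natural telescope gives $d\bigl(\sum_{k=0}^{n} H_2(fg)^k\alpha\bigr) = \alpha - (fg)^{n+1}\alpha$, which does not terminate, and in general there is \emph{no} finite $c$ solving $dc=\alpha$ while the $(2,2)$-entry is kept equal to the original $H_2$.

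The standard fix, which works verbatim in the curved setting, is not to solve for $c$ but to take $c=0$ and observe that the defect
\[
\phi \;:=\; \Delta_C s + s\Delta_C \;=\; \begin{pmatrix}\Id & 0\\ H_2 f - fH_1 & \Id\end{pmatrix}
\]
is unipotent lower-triangular, hence an automorphism of $\mathrm{Cone}(f)$; moreover $\phi$ is automatically a chain map because $\Delta_C\phi - \phi\Delta_C = \Delta_C^2 s - s\Delta_C^2 = (z_B-z_A)u\,s - s\,(z_B-z_A)u = 0$ by centrality of the curvature. Hence $s' := \phi^{-1}s$ satisfies $\Delta_C s' + s'\Delta_C = \phi^{-1}\phi = \Id$. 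Explicitly $s' = \begin{psmallmatrix}-H_1 & g\\ -\alpha H_1 & H_2 + \alpha g\end{psmallmatrix}$, so both the $(2,1)$ and the $(2,2)$ entries of your ansatz have to change.
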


%The next two key technical results could be found in \cite{GHy}.
%\begin{proposition} \label{contractlem}
%\cite[Lemma 2.19]{GHy}
%If $(V,\Delta^+) \in \Kom(\cal{K})$ is contractible by a homotopy $h$ with $h^2=0$, then any deformation $(V,\Delta^+ + u \Delta^-)$ is contractible in $\tKom(\cal{K})[u]$.
%\end{proposition}

The following is a generalization of \cite[Lemma 2.19]{GHy} where it is assumed that the null-homotopy $h$ satisfies $h^2=0$.  For abelian 2-categories this assumption suffices, but in the additive setting (in particular for additive 2-categories) we require a new proof.

\begin{proposition} \label{contractlem}
If $(V,\Delta^+) \in \Kom^-(\cal{K})$ is contractible, then any deformation $(V,\Delta^+ + u \Delta^-)$ is contractible in $\tKom(\cal{K})[u]$.
\end{proposition}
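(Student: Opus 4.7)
The strategy is a perturbation-lemma style argument: starting from a null-homotopy $h \colon V \to V[-1]$ with $\Delta^+ h + h\Delta^+ = \id_V$, I would construct a null-homotopy $H = \sum_{k \geq 0} u^k H_k$ of degree $-1$ for the curved complex $(V, \Delta^+ + u\Delta^-)$ satisfying $(\Delta^+ + u\Delta^-)H + H(\Delta^+ + u\Delta^-) = \id_V$. Setting $H_0 := h$ handles the $u^0$ coefficient of this equation, and for $k \geq 1$ one is reduced to solving the inductive relations
\begin{equation*}
\Delta^+ H_k + H_k \Delta^+ \; = \; -\bigl(\Delta^- H_{k-1} + H_{k-1}\Delta^-\bigr).
\end{equation*}

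The natural candidate is $H_k := -h\bigl(\Delta^- H_{k-1} + H_{k-1}\Delta^-\bigr)$. To verify this ansatz, I would apply the fundamental identity $\Delta^+(hg) + (hg)\Delta^+ = g - h(\Delta^+ g + g\Delta^+)$ (valid for $g$ of appropriate parity, and a direct consequence of $\Delta^+ h + h\Delta^+ = \id_V$), which reduces the check to controlling $h \cdot \bigl(\Delta^+(\Delta^- H_{k-1} + H_{k-1}\Delta^-) + (\Delta^- H_{k-1} + H_{k-1}\Delta^-)\Delta^+\bigr)$. Using the graded Jacobi identity together with the two curvature constraints $\{\Delta^+, \Delta^-\} = z_B - z_A$ and $(\Delta^-)^2 = 0$ (both forced by expanding $(\Delta^+ + u\Delta^-)^2 = u(z_B - z_A)$ in powers of $u$ and matching coefficients in $\cal{K}[u]$), the inductive hypothesis $[\Delta^+, H_{k-1}] = -[\Delta^-, H_{k-2}]$ should close the step.

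The main obstacle is the convergence of the series $H = \sum_k u^k H_k$: since $\End(V)[u]$ consists of polynomials rather than formal power series in $u$, only finitely many of the $H_k$ may be nonzero. This is where the hypothesis $V \in \Kom^-(\cal{K})$ is essential. For a bounded contractible complex, the iterated compositions $h\Delta^- h \Delta^- \cdots$ defining $H_k$ must eventually land outside the support of $V$ and so vanish. In the additive (rather than abelian) $2$-category setting considered here one cannot appeal directly to the cleaner argument of \cite[Lemma 2.19]{GHy}, which exploited $h^2 = 0$; instead termination must be verified more carefully, perhaps by first reducing to a decomposition of the contractible bounded complex into trivial two-term summands $X \xrightarrow{\id} X$ (where the Krull--Schmidt hypotheses on $\cal{K}$ apply), on which the deformation is a manifestly finite perturbation, and then reassembling. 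The algebraic verification of the recursion is routine once the curvature identities are in hand; it is this finiteness issue that I expect to be the technical heart of the proof.
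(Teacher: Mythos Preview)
Your recursive construction $H_0 = h$, $H_k = -h(\Delta^- H_{k-1} + H_{k-1}\Delta^-)$ is correct and in fact unwinds, using $(\Delta^-)^2 = 0$, to exactly the paper's closed formula
\[
H \;=\; \sum_{\ell \geq 0}\;\sum_{j_1+j_2 = \ell} (-1)^\ell (h\Delta^-)^{j_1}\, h\, (h\Delta^-)^{j_2}\, u^\ell.
\]
So the two approaches coincide: yours is a recursive presentation of the same null-homotopy that the paper writes down explicitly and then verifies via direct combinatorial identities. (One minor sign: since $g = \Delta^- H_{k-1} + H_{k-1}\Delta^-$ has even degree, your displayed identity should read $\Delta^+(hg) + (hg)\Delta^+ = g - h(\Delta^+ g - g\Delta^+)$; with this correction the induction closes exactly as you outline, using centrality of $z$ and $(\Delta^-)^2=0$.)

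Where you overcomplicate things is the termination discussion. The Krull--Schmidt decomposition into two-term summands that you propose is not available in general --- the proposition places no such hypothesis on $\cal{K}$ --- and is in any case unnecessary. The argument is the elementary one you mention first and then abandon: $h\Delta^- \in \End^{-2}(V)$, so for $V$ bounded in the appropriate direction $(h\Delta^-)^\ell = 0$ once $\ell$ is large, and the sum defining $H$ is finite. This is exactly what the paper does. The additive-versus-abelian issue you raise is not about termination at all; the reason one cannot simply quote \cite{GHy} is that without the hypothesis $h^2=0$ one needs the more elaborate formula above (rather than, say, $\sum_\ell (-u)^\ell (h\Delta^-)^\ell h$), and verifying that it gives a null-homotopy requires the honest computation --- not a different finiteness mechanism.
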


\begin{proof}
Let $(V,\Delta^+) \in \Kom^-(_{B}\cal{K}_{A})$ be a contractible complex and suppose that it deforms to a factorization $(V,\Delta)$.   Then $\Delta^2 = u z$ for some central element $z$  and $\Delta^+ \Delta^- + \Delta^- \Delta^+ = z$.  Fix a null-homotopy $h \in \End^{-1}(X)$ satisfying $\Delta^+ h+h \Delta^+ = \Id_{V}$.
Then the following hold by induction:
\begin{align}
  %\left( \Delta^- h\right)^{\ell} d (\Delta^- h)^k
%& =
%\delta_{k,0} \left( \Delta^- h\right)^{\ell-1} \Delta^-
%+ \left( \Delta^- h\right)^{\ell-1} d \left( \Delta^- h\right)^{k+1}
%- z \left( \Delta^- h\right)^{\ell-1} h \left( \Delta^- h\right)^{k}
%\\
%\left( \Delta^- h\right)^{\ell}d - d \left( \Delta^- h\right)^{\ell}
%& =
%\left( \Delta^- h\right)^{\ell-1}\Delta^-
%- z \sum_{j=0}^{\ell-1}  \left( \Delta^- h\right)^{\ell-1-j} h \left( \Delta^- h\right)^{j}
%\\
\left(h \Delta^-\right)^{\ell} \Delta^+ - \Delta^+ \left( h\Delta^-\right)^{\ell}
& =
-\Delta^-\left( h\Delta^-\right)^{\ell-1}
+ z \sum_{j=0}^{\ell-1}  \left( h\Delta^-\right)^{\ell-1-j} h \left( h\Delta^-\right)^{j}
\label{XXX2}
\\ \label{XXX1}
\Delta^+ \left( h\Delta^-\right)^{\ell}h  + \left( h\Delta^-\right)^{\ell} h \Delta^+ %= \left( h\Delta^-\right)^{\ell}h
& =
 \left( \Delta^- h\right)^{\ell}
+ \left( h\Delta^-\right)^{\ell}
- z \sum_{j=0}^{\ell-1} \left( h\Delta^-\right)^{\ell-1-j} h \left( h\Delta^-\right)^{j}h .
%\\
%dh\left(h \Delta^-\right)^{\ell}  + h \left( h \Delta^-\right)^{\ell}d %= \left( h\Delta^-\right)^{\ell}h
%& =
\end{align}
%where the last equation is equivalent to the (graded) commutator
%\begin{align}
%%\left[ d,   h \left( \Delta^- h\right)^{\ell} \right]
%%& = \left( \Delta^- h\right)^{\ell}
%%+ \left( h\Delta^-\right)^{\ell}
%%- z \sum_{j=0}^{\ell-1}   h \left( \Delta^- h\right)^{\ell-1-j} h \left( \Delta^- h\right)^{j}
%%\\
%\left[ d,   \left( h \Delta^-\right)^{\ell} h \right]
%& = \left( \Delta^- h\right)^{\ell}
%+ \left( h\Delta^-\right)^{\ell}
%- z \sum_{j=0}^{\ell-1}   \left( h \Delta^-\right)^{\ell-1-j} h \left( h \Delta^-\right)^{j}h.
%\end{align}

Then define a null-homotopy of the deformed complex $(V,\Delta)$ via the map
 \begin{equation}
 H \;:= \;
 \sum_{\ell \geq 0} \sum_{j_1+ j_2 = \ell}(-1)^{\ell} (h \Delta^-)^{j_1} h (h\Delta^-)^{j_2} u^{\ell},
\end{equation}
which is a finite sum since $V$ is bounded below and $(h\Delta^-) \in \End^{-2}(V)$.
Write $H^{\ell}$ for the term of $H$ corresponding to $u^{\ell}$.
To see that
$
\Delta H + H \Delta = \Id_{V}
$
observe that by using  \eqref{XXX2} on the second term we have%the super commutator of the degree $u^{\ell}$ part of $H$ with $d$ is
\begin{align} \label{XXX3}
\Delta^+ H^{\ell}
 +H^{\ell} \Delta^+
& =
(-1)^{\ell}\sum_{j_1+ j_2 = \ell} \Delta^+ (h \Delta^-)^{j_1} h (h\Delta^-)^{j_2} u^{\ell}
+
(-1)^{\ell} \sum_{j_1+ j_2 = \ell}(h \Delta^-)^{j_1} h (h\Delta^-)^{j_2} \Delta^+ u^{\ell}
%\nn
\\
& \refequal{\eqref{XXX2}} (-1)^{\ell}\sum_{j_1+ j_2 = \ell} \Delta^+ (h \Delta^-)^{j_1} h (h\Delta^-)^{j_2} u^{\ell}
 \nn \\
& \;\;
 +  (-1)^{\ell} \sum_{j_1+ j_2 = \ell}(h \Delta^-)^{j_1} h \Delta^+ (h\Delta^-)^{j_2} u^{\ell}
\nn \\
& \quad -
(-1)^{\ell} \sum_{j_1+ j_2 = \ell}(h \Delta^-)^{j_1} h  \Delta^-(h\Delta^-)^{j_2-1}  u^{\ell}
\nn \\
& \qquad +
(-1)^{\ell} z\sum_{j_1+ j_2 = \ell}\sum_{k=0}^{j_2-1} (h \Delta^-)^{j_1} h \left( h\Delta^-\right)^{j_2-1-k} h \left( h\Delta^-\right)^{k} u^{\ell} \nn.
\end{align}
Now simplify the second term after the equality:
\begin{align}
  &(-1)^{\ell} \sum_{j_1+ j_2=\ell}(h \Delta^-)^{j_1} h \Delta^+   (h\Delta^-)^{j_2} u^{\ell} \hspace{2in}
%=
\nn \\
%& \quad
  &\qquad \refequal{\eqref{XXX1}}
- (-1)^{\ell} \sum_{j_1+ j_2 = \ell} \Delta^+ (h \Delta^-)^{j_1} h(h\Delta^-)^{j_2} u^{\ell}
\nn
\\
& \qquad \qquad+ (-1)^{\ell} \sum_{j_1+ j_2 = \ell}(\Delta^-h )^{j_1}    (h\Delta^-)^{j_2} u^{\ell}
+ (-1)^{\ell} \sum_{j_1+ j_2 = \ell}(h \Delta^-)^{j_1}    (h\Delta^-)^{j_2} u^{\ell}
\nn \\
& \qquad \qquad  -
(-1)^{\ell} z \sum_{j_1+ j_2=\ell} \sum_{k=0}^{j_1-1}   (h \Delta^-)^{j_1-1-k} h (h \Delta^-)^k   h  (h\Delta^-)^{j_2} u^{\ell}
\nn.
\end{align}
Then putting these terms back into \eqref{XXX3} gives
\begin{align}
\Delta^+ H^{\ell} + H^{\ell} \Delta^+
& =
-(-1)^{\ell} \sum_{j_1+ j_2 = \ell}(h \Delta^-)^{j_1} (h\Delta^-)^{j_2}  u^{\ell}
\nn \\
& \qquad +
(-1)^{\ell} z\sum_{j_1+ j_2 = \ell}\sum_{k=0}^{j_2-1} (h \Delta^-)^{j_1} h \left( h\Delta^-\right)^{j_2-1-k} h \left( h\Delta^-\right)^{k} u^{\ell}
\nn \\
& \qquad + (-1)^{\ell} \sum_{j_1+ j_2 = \ell}(\Delta^-h )^{j_1}    (h\Delta^-)^{j_2} u^{\ell}
+ (-1)^{\ell} \sum_{j_1+ j_2 = \ell}(h \Delta^-)^{j_1}    (h\Delta^-)^{j_2} u^{\ell}
\nn \\
& \qquad  -
(-1)^{\ell} z \sum_{j_1+ j_2=\ell} \sum_{k=0}^{j_1-1}   (h \Delta^-)^{j_1-1-k}h (h \Delta^-)^k h   (h\Delta^-)^{j_2} u^{\ell}
\end{align}
Simplifying the above we get,
\begin{align}
\Delta^+ H^{\ell} + H^{\ell} \Delta^+
& = (-1)^{\ell} \sum_{j_1+ j_2 = \ell}(\Delta^-h )^{j_1}    (h\Delta^-)^{j_2} u^{\ell}
\nn \\
& \qquad +
(-1)^{\ell} z\sum_{j_1=0}^{\ell-1}\sum_{k=0}^{\ell-j_1-1} (h \Delta^-)^{j_1} h \left( h\Delta^-\right)^{\ell-j_1-1-k} h \left( h\Delta^-\right)^{k} u^{\ell}
\nn \\
& \qquad  -
(-1)^{\ell} z \sum_{j_2=0}^{\ell-1} \sum_{k=0}^{\ell-j_2-1}   (h \Delta^-)^{\ell-j_2-1-k}h (h \Delta^-)^k h   (h\Delta^-)^{j_2} u^{\ell}
\end{align}
so that
\begin{align} \label{eq:commdH}
\Delta^+ H^{\ell} + H^{\ell} \Delta^+
& = (-1)^{\ell} \sum_{j_1+ j_2 = \ell}(\Delta^-h )^{j_1}    (h\Delta^-)^{j_2} u^{\ell}.
\end{align}

For the remaining piece $u\Delta^-$ of $\Delta = \Delta^+ + u \Delta^-$ observe
\begin{align}
\left[u\Delta^-, H^{\ell} \right]
& =
  \sum_{j_1+ j_2 = \ell}(-1)^{\ell} \Delta^- (h \Delta^-)^{j_1} h (h\Delta^-)^{j_2} u^{\ell+1}
 +
  \sum_{j_1+ j_2 = \ell}(-1)^{\ell} (h \Delta^-)^{j_1} h (h\Delta^-)^{j_2} \Delta^- u^{\ell+1}
 \nn \\
 & =
  \sum_{j_1+ j_2 = \ell}(-1)^{\ell}  (\Delta^-h )^{j_1+1}   (h\Delta^-)^{j_2} u^{\ell+1}
 +
  (-1)^{\ell} (h \Delta^-)^{\ell} h  \Delta^- u^{\ell+1}
\nn \\
 & =
  -\sum_{j_1+ j_2 = \ell+1}(-1)^{\ell+1}  (\Delta^-h )^{j_1}   (h\Delta^-)^{j_2} u^{\ell+1}.
\label{eq:commDH}
\end{align}
Combing \eqref{eq:commdH} and \eqref{eq:commDH} we get
$[\Delta^+ + u\Delta^-, H] = \Id_{V}$.
\end{proof}

\begin{proposition} \label{extensionprop}
\cite[Proposition 2.20]{GHy}
Let $(C,\Delta^+), (C', \Delta'^+) \in \Kom({}_B\cal{K}_A)$ be invertible complexes and $\phi_0 \colon C \rightarrow C'$ be a homotopy equivalence.  Then $\phi_0$ extends to a homotopy equivalence
$\phi \colon (C,\Delta) \rightarrow (C',\Delta')$ of $(z_B,z_A)$-factorizations where
$(C,\Delta)$ and $(C',\Delta')$ are deformations of $C$ and $C'$ respectively.
\end{proposition}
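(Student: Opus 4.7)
The strategy is to build $\phi$ order by order in $u$, writing $\phi = \sum_{k \geq 0} \phi_k u^k$ with $\phi_k \colon C \to C'$ of homological degree $-2k$. Expanding the desired relation $\phi \Delta = \Delta' \phi$ in powers of $u$, the $u^0$ coefficient is the chain-map relation ${\Delta'}^+ \phi_0 = \phi_0 \Delta^+$ which is given by hypothesis, while for $k \geq 1$ the $u^k$ coefficient becomes
\[
d(\phi_k) \; := \; {\Delta'}^+ \phi_k - \phi_k \Delta^+ \; = \; \phi_{k-1}\Delta^- - {\Delta'}^- \phi_{k-1} \; =: \; g_k,
\]
where $d$ denotes the standard differential on $\Hom^\bullet(C, C')$ coming from $\Delta^+$ and ${\Delta'}^+$. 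Thus the proposition reduces to inductively solving $d(\phi_k) = g_k$ for $k \geq 1$.

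First I would verify that $g_k$ is $d$-closed. This is a direct calculation using $(\Delta^\pm)^2 = 0$, the curvature identity $\Delta^+ \Delta^- + \Delta^- \Delta^+ = z_B - z_A$ (and its analogue for $\Delta'$), the centrality of $z_A$ and $z_B$ (so by the interchange law they pass freely through any morphism $C \to C'$), together with the inductive relations $d(\phi_j) = g_j$ for $j < k$. The contributions of $z_B - z_A$ cancel pairwise and the remaining terms vanish by the inductive hypothesis.

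The central step is to show $g_k$ is $d$-exact. Since $\phi_0$ is a homotopy equivalence, composition with $\phi_0$ identifies the cohomology of $\Hom^\bullet(C,C')$ with $\End^\bullet_{\Kom(\cal{K})}(C)$. Invertibility of $C$ then produces an isomorphism $\End^\bullet_{\Kom(\cal{K})}(C) \cong \End^\bullet_{\Kom(\cal{K})}(\1)$ via tensoring with an inverse $C^{-1}$, and the right-hand side vanishes in all nonzero homological degrees because $\1$ is a single-term complex. Since $g_k$ lives in homological degree $1 - 2k \neq 0$, its cohomology class is zero and a primitive $\phi_k$ may be chosen. Iterating yields the desired morphism $\phi$ of $(z_B, z_A)$-factorizations.

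Finally, to promote $\phi$ to a homotopy equivalence, I would form $\mathrm{Cone}(\phi) \in \tKom(\cal{K})[u]$: its underlying uncurved complex is a deformation of $\mathrm{Cone}(\phi_0)$, which is contractible because $\phi_0$ is a homotopy equivalence. Proposition~\ref{contractlem} then gives $\mathrm{Cone}(\phi) \simeq 0$ in $\tKom(\cal{K})[u]$, and the usual cone criterion (in its curved form) identifies $\phi$ as a homotopy equivalence of factorizations. The main obstacle in this plan is the $d$-exactness step: extracting the vanishing of cohomology in nonzero homological degree cleanly from the invertibility hypothesis is the one genuinely nontrivial input, while everything else is careful bookkeeping of signs and differentials.
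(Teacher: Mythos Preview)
Your proof is correct and follows essentially the same route as the paper's: both build $\phi=\sum_{k\ge 0}\phi_k u^k$ inductively by showing that the obstruction $g_k=\phi_{k-1}\Delta^--{\Delta'}^-\phi_{k-1}$ (the paper's $-\theta_k$) is a chain map of nonzero homological degree, hence null-homotopic by the invertibility argument $\Hom(C,C')\cong\End(\1_B)$, and then invoke Proposition~\ref{contractlem} on $\mathrm{Cone}(\phi)$. The only cosmetic difference is that you package the calculation in terms of the differential $d$ on $\Hom^\bullet(C,C')$, whereas the paper writes out the commutator $\Delta'^+\theta_k+\theta_k\Delta^+$ explicitly.
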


\begin{proof}
Since $C$ and $C'$ are invertible and homotopically equivalent, we have
\begin{equation*}
\Hom(C,C') \cong \Hom(\Id,C' \otimes C^{-1}) \cong \End(\1_B).
\end{equation*}
Thus any chain map in $\Hom(C,C')$ of non-zero homological degree is null-homotopic.  Now we will show how to extend $\phi_0$ to a chain map
$\phi = \sum_{i \ge 0} \phi_i u^i$
between deformations where the homological degree of $\phi_i$ is $-2i$.
Note that $\Delta=\Delta^+ + \Delta^- u$ and $\Delta'=\Delta'^+ + \Delta'^- u$.

Assume that we have constructed $\phi_i$ for all $i<k$ and thus that for such $i$ we may assume
\begin{equation} \label{phiassump}
\Delta' \phi_i = \phi_i \Delta
\end{equation}
which is equivalent to assuming that for all $i < k$ that
\begin{equation} \label{phiassump2}
\Delta'^+ \phi_i - \phi_i \Delta^+ + \Delta'^- \phi_{i-1} - \phi_{i-1} \Delta^- =0.
\end{equation}
Then consider the element
\begin{equation*}
\theta_k=\Delta'^-  \phi_{k-1} - \phi_{k-1}  \Delta^-.
\end{equation*}
We will now check that $\theta_k$ is a chain map between $C$ and $C'$.
Since the homological degree of $\theta_k$ is $1-2k$, and the homological degrees of $\Delta^+$ and $\Delta'^+$ are odd as well, we must verify  $\Delta'^+  \theta_k =-\theta_k  \Delta^+$.

\begin{equation*} \label{chaincheck1}
\Delta'^+ \theta_k + \theta_k \Delta^+ =
\Delta'^+ \Delta'^- \phi_{k-1} - \phi_{k-1} \Delta^- \Delta^+
- \Delta'^+ \phi_{k-1} \Delta^-
+ \Delta'^- \phi_{k-1} \Delta^+.
\end{equation*}
Using \eqref{phiassump2} on each of the last two terms in \eqref{chaincheck1}, we get
\begin{align} \label{chaincheck2}
\Delta'^+ \theta_k + \theta_k \Delta^+
=
&(\Delta'^+ \Delta'^- \phi_{k-1} - \phi_{k-1} \Delta^- \Delta^+ )+ \\ \nonumber
&( \Delta'^- \Delta'^+ \phi_{k-1}
+ \Delta'^- \Delta'^- \phi_{k-2}
- \Delta'^- \phi_{k-2} \Delta^- ) + \\ \nonumber
&(- \phi_{k-1} \Delta^+ \Delta^-
+ \Delta'^- \phi_{k-2} \Delta^-
- \phi_{k-2} \Delta^- \Delta^- ) .
%&- \phi_{k-1} \Delta^- \Delta^+.
\end{align}
Using the fact that $\Delta^2 = \Delta'^2=(z_B-z_A)$, we get that \eqref{chaincheck2} is equal to
\begin{equation*}
(z_B-z_A) \phi_{k-1} - \phi_{k-1}(z_B-z_A) =0
\end{equation*}
since $\phi_{k-1}$ is comprised of $(\End(\1_B),\End(\1_A))$-bimodule homomorphisms.

Since the homological degree of $\theta_k$ is $1-2k$ and it is a chain map between invertible, equivalent complexes, by the above reasoning, $\theta_k$ must be null-homotopic.  Thus there exists a map $h_k \colon C \rightarrow C'$ of homological degree $-2k$ such that
\begin{equation*}
\theta_k=\Delta'^+ h_k-  h_k \Delta^+.
\end{equation*}
Now define $\phi_k=-h_k$. Then the part of $\Delta' \circ \phi-\phi \circ \Delta$ of homological degree $1-2k$ is
\begin{align*}
\Delta'^+ \phi_k - \phi_k \Delta^+ + \Delta'^- \phi_{k-1} - \phi_{k-1} \Delta^- = \Delta'^+ \phi_k - \phi_k \Delta^+ + \theta_k =-(\Delta'^+ h_k - h_k \Delta^+) + \theta_k = 0.
\end{align*}
Thus building $\phi$ in this way, we see that it is a chain map between deformations.

By Proposition \ref{contractlem}, $Cone(\phi)$ is contractible so $\phi$ is a homotopy equivalence of deformations.
\end{proof}

\subsection{Curved Rickard complexes}

We take our $2$-category $\cal{K}$ to be $\Ucat_Q$ and consider the corresponding homotopy category of curved complexes $\tKom(\Ucat_Q)[u]$ where $u$ is a formal indeterminate of bi-degree $[2]\la -2 \ra$.

\begin{remark}
In Section \ref{subsecgencurved}, $u$ has degree just $[2]$ but now, since our underlying $2$-category has an extra grading, we need to impose an additional grading on $u$.
\end{remark}

For a parameter $c \in \Bbbk$ we define
\begin{align*}
\tau_{i,c} \1_{\l} &:=
\begin{cases}
\left[ \sE_i^{(-\l_i)} \1_\l
\substack{\xrightarrow{d^+} \\ \xleftarrow[cud^-]{}}
{\sE}_i^{(-\l_i+1)} {\sF}_i^{(1)} \1_\l \langle 1 \rangle
\substack{\xrightarrow{d^+} \\ \xleftarrow[cud^-]{}}
\cdots
\substack{\xrightarrow{d^+} \\ \xleftarrow[cud^-]{}}
{\sE}_i^{(-\l_i+k)} {\sF}_i^{(k)} \1_{\l} \langle k \rangle
\substack{\xrightarrow{d^+} \\ \xleftarrow[cud^-]{}}
\cdots \right] &\text{ if } \l_i \le 0 \\
\left[ {\sF}_i^{(\l_i)} \1_{\l}
\substack{\xrightarrow{d^+} \\ \xleftarrow[cud^-]{}}
{\sE}_i^{(1)}{\sF}_i^{(\l_i+1)} \1_{\l} \langle 1 \rangle
\substack{\xrightarrow{d^+} \\ \xleftarrow[cud^-]{}}
\cdots
\substack{\xrightarrow{d^+} \\ \xleftarrow[cud^-]{}}
{\sE}_i^{(k)} {\sF}_i^{(\l_i+k)}\1_{\l} \langle k \rangle
\substack{\xrightarrow{d^+} \\ \xleftarrow[cud^-]{}}
\cdots \right]
&\text{ if } \l_i \ge 0
\end{cases}
\end{align*}
so that $\Delta = d^+ + cud^-$.  In particular, taking $c=0$ recovers the original Rickard complex. Note that for any value of $c$, $\tau_{i,c} \1_{\l}$ is a curved complex with connection $\Delta = d^+ + ucd^-$.  To see this, choose any $j \in I$ with $(b_j,\alpha_i)_Q \neq 0$ and use \eqref{eq:lsquare} to conclude that
\begin{equation}\label{eq:calc}
(d^+ + c d^-)^2
= c\left(d^+d^- + d^- d^+\right)
=  \frac{c}{(b_{j}, \alpha_i)_Q}\left(s_i(b_j) \cdot \Id_{\tau_i}  - \Id_{\tau_i}\cdot b_j \right).
\end{equation}

Similarly we define
\begin{align*}
\1_{\l} \tau'_{i,c} :=
\begin{cases}
\left[
\cdots
\substack{\xrightarrow{d^-} \\ \xleftarrow[cud^+]{}}
\1_{\l} {\sE}_i^{(k)}  {\sF}_i^{(-\l_i+k)} \langle -k \rangle
\substack{\xrightarrow{d^-} \\ \xleftarrow[cud^+]{}}
\cdots
\substack{\xrightarrow{d^-} \\ \xleftarrow[cud^+]{}}
\1_{\l} {\sE}_i^{(1)} {\sF}_i^{(-\l_i+1)} \langle -1 \rangle
\substack{\xrightarrow{d^-} \\ \xleftarrow[cud^+]{}}
\1_{\l} {\sF}_i^{(-\l_i)} \right]
\quad \quad &\text{ if } \l_i \le 0
\\
\left[ \cdots
\substack{\xrightarrow{d^-} \\ \xleftarrow[cud^+]{}}
\1_{\l} {\sE}_i^{(\l_i+k)}{\sF}_i^{(k)} \langle -k \rangle
\substack{\xrightarrow{d^-} \\ \xleftarrow[cud^+]{}}
\cdots
\substack{\xrightarrow{d^-} \\ \xleftarrow[cud^+]{}}
\1_{\l} {\sE}_i^{(\l_i+1)}{\sF}_i^{(1)} \langle -1 \rangle
\substack{\xrightarrow{d^-} \\ \xleftarrow[cud^+]{}}
\1_{\l} {\sE}_i^{(\l_i)} \right]
\quad \quad &\text{ if } \l_i \ge 0
\end{cases}
\end{align*}
so that $\Delta = d^- + cud^+$.

\begin{proposition} \label{prop:curved}
For any $b \in \End^2_{\cal{U}_Q(\mf{g})}(\1_\l)$   we have that $\tau_{i,(b,\alpha_i)_Q} \1_{\l}$ is an $(s_i(b), b)$-factorization and likewise $\1_{\l} \tau'_{i,(b,\alpha_i)_Q}$ is a $(b, s_i(b))$-factorization.
\end{proposition}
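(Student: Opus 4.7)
The plan is to verify Definition~\ref{defoffac} directly by computing $\Delta^2$ for the connection $\Delta = d^+ + cu\,d^-$ with $c = (b,\alpha_i)_Q$, and recognizing almost all the work as having been done in Theorem~\ref{squareflopcor}. Concretely, expand
\[
\Delta^2 = (d^+)^2 + cu\bigl(d^+ d^- + d^- d^+\bigr) + c^2 u^2 (d^-)^2.
\]
The terms $(d^+)^2$ and $(d^-)^2$ should both vanish: the first because $\tau_i \onel$ is an ordinary (non-curved) Rickard complex, and the second because, up to the biadjunction zig-zag isomorphisms $\sE_i^{(a)}\sF_i^{(b)} \cong \sF_i^{(b)}\sE_i^{(a)}$ (in the relevant weight range), the maps $d^-$ are the differential of the Rickard complex $\onel\tau'_i$ and so square to zero. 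The middle term is then computed directly from \eqref{eq:lsquare}: with the choice $c = (b,\alpha_i)_Q$,
\[
cu\bigl(d^+ d^- + d^- d^+\bigr) = u\bigl(s_i(b)\cdot \Id_{\tau_i\onel} - \Id_{\tau_i\onel}\cdot b\bigr).
\]

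Putting the pieces together yields $\Delta^2 = u\bigl(s_i(b)\cdot \Id - \Id\cdot b\bigr)$, which is exactly the curvature condition for an $(s_i(b),b)$-factorization in ${}_{s_i(\lambda)}\cal{U}_Q{}_\lambda$, since $\tau_i\onel$ is a complex of $1$-morphisms from $\lambda$ to $s_i(\lambda)$.

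For the second statement about $\onel\tau'_{i,c}$, the same calculation goes through with the roles of $d^+$ and $d^-$ swapped: now $\Delta = d^- + cu\,d^+$, so $\Delta^2 = cu(d^+d^- + d^-d^+)$ again, and Theorem~\ref{squareflopcor} still applies. The only thing to track is the source/target labelling: $\onel\tau'_i$ is a complex from $s_i(\lambda)$ to $\lambda$, so $z_A = s_i(b)$ sits on the right and $z_B = b$ on the left, giving a $(b,s_i(b))$-factorization. The only real content beyond Theorem~\ref{squareflopcor} is the verification that $(d^-)^2 = 0$, which I expect to be the (mild) main obstacle; it should reduce to a thick-calculus diagrammatic identity dual to $(d^+)^2 = 0$ via the cyclic biadjoint structure and the splitter/merger associativity in $\UcatD_Q$, so no new input is required.
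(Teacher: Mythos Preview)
Your approach matches the paper's, which simply cites Theorem~\ref{squareflopcor} and the computation \eqref{eq:calc}. One point deserves correction: your justification of $(d^-)^2=0$ via ``biadjunction zig-zag isomorphisms $\sE_i^{(a)}\sF_i^{(b)} \cong \sF_i^{(b)}\sE_i^{(a)}$'' is off --- those maps are not isomorphisms in general, and no such identification is needed here. The terms $\sE_i^{(k)}\sF_i^{(\lambda_i+k)}\onel$ of $\tau_i\onel$ are literally the same $1$-morphisms as the terms of $\1_{s_i(\lambda)}\tau'_i$ (just read the weight label on the other side), and $d^-$ is the differential of that ordinary Rickard complex; hence $(d^-)^2=0$ with no further work. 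The dual observation (swapping $\tau_i$ and $\tau'_i$) handles $(d^+)^2=0$ in your argument for $\onel\tau'_{i,c}$.
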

\begin{proof}
This follows immediately from Theorem~\ref{squareflopcor} and the subsequent calculation in (\ref{eq:calc}).
\end{proof}

\begin{remark}
We will write $\tau_{i,(b,\alpha_i)_Q} \1_\l$ to implicitly mean the curved complex which is a $(s_i(b),b)$-factorization.
\end{remark}

\begin{remark}
One can always specialize the formal variable $u$ to a scalar. However, this requires one to identify $[1]\la -1\ra$ with the trivial shift (which has the effect of killing one of the gradings).
\end{remark}

\begin{lemma}\label{lem:square}
If $c \ne 0$ then $\tau_{i,c} \1_\l \cong \tau'_{i,c} \1_\l$ inside the localized category $\tKom(\Ucat_Q)[u^\pm]$.
\end{lemma}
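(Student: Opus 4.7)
I would begin by noting that, by Proposition \ref{prop:curved}, both $\tau_{i,c}\onel$ and $\tau'_{i,c}\onel$ carry compatible curvature data: fixing $b\in\End^2(\onel)$ with $(b,\alpha_i)_Q=c$, both are $(s_i(b),b)$-factorizations after identifying source and target weights appropriately. Writing out the terms explicitly for $\l_i\ge 0$ (the case $\l_i\le 0$ being formally dual), the $k$th term of $\tau_{i,c}\onel$ is $\cE_i^{(k)}\cF_i^{(\l_i+k)}\onel\la k\ra$ in homological degree $+k$, whereas the corresponding term of $\tau'_{i,c}\onel$ has the form $\cE_i^{(\l_i+k)}\cF_i^{(k)}\onel\la -k\ra$ in homological degree $-k$, with the roles of $d^+$ and $d^-$ exchanged in the two connections $\Delta=d^++cud^-$ and $\Delta'=d^-+cud^+$.

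The plan is then to construct an explicit morphism $\phi\colon\tau_{i,c}\onel\to\tau'_{i,c}\onel$ in $\tKom(\Ucat_Q)[u^\pm]$ whose $k$th component bridges homological degree $+k$ to homological degree $-k$. This is possible only because $u$ is invertible: multiplication by an appropriate negative power of $u$ (times a compensating power of $c^{-1}$) produces a morphism of the correct bidegree. The matrix entries of $\phi$ will be built from the thick-calculus splitters, mergers, cups and caps of Section \ref{sec-qgroup}, together with the non-trivial identifications between $\cE^{(a)}\cF^{(b)}$ and $\cF^{(b)}\cE^{(a)}$ arising from the $\mf{sl}_2$ decomposition and from \cite[Lemma 5.3 and Proposition 5.10]{KLMS} (as used in the Remark following \eqref{eq:dab}).

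Next I would check that $\phi$ intertwines the curved connections, i.e.\ that $\Delta'\phi=\phi\Delta$. This reduces to identities in the thick calculus; the main tools are the bubble-slide rules \eqref{eq:bub-slide}--\eqref{eq:jslide2} from Section \ref{sec-bubbles}, the square-flop Lemma \ref{squarefloplem}, and the homotopy identity \eqref{eq:lsquare} of Theorem \ref{squareflopcor}, which is precisely the statement needed to exchange $d^+d^-$ and $d^-d^+$ modulo curvature. To finish, I would compute the cone of $\phi$ and show it is contractible in $\tKom(\Ucat_Q)[u^\pm]$ by performing Gaussian elimination on successive paired terms --- available because both $u$ and $c$ are invertible --- and then appeal to Proposition \ref{contractlem} for the final reduction. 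Alternatively, Proposition \ref{extensionprop} can be invoked, reducing the problem to showing that the underlying semi-infinite chain complexes become equivalent after the $u$-rescaling is in place.

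The main obstacle is the explicit bookkeeping: ensuring that the scalar factors, sign conventions, and bidegree shifts line up simultaneously across the infinitely many homological positions, and carrying out the argument uniformly in both signs of $\l_i$. Once the correct normalisation of $\phi$ is pinned down, compatibility with the connections should follow mechanically from the calculations in Section \ref{sec-bubbles}, and the general theory of curved complexes developed in Section \ref{sec-curved} (together with the analogous two-term arguments in \cite{GHy}) will take care of the rest.
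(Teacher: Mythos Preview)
Your proposal rests on a misidentification of the terms of $\tau'_{i,c}\onel$. You write that its $k$th term is $\cE_i^{(\l_i+k)}\cF_i^{(k)}\onel\la -k\ra$, but this is the $k$th term of $\onel\tau'_{i,c}$, not of $\tau'_{i,c}\onel$. Since $\tau'_{i,c}\onel=\1_{s_i(\l)}\tau'_{i,c}$ and $(s_i(\l))_i=-\l_i\le 0$, one must use the other branch of the definition, whose terms are $\1_{s_i(\l)}\cE_i^{(k)}\cF_i^{(\l_i+k)}=\cE_i^{(k)}\cF_i^{(\l_i+k)}\onel$. (Your expression does not even land in the correct weight: $\cE_i^{(\l_i+k)}\cF_i^{(k)}\onel$ maps $\l$ to $\l+\l_i\alpha_i$, not to $s_i(\l)$.) Thus $\tau_{i,c}\onel$ and $\tau'_{i,c}\onel$ have \emph{literally the same terms} $\cE_i^{(k)}\cF_i^{(\l_i+k)}\onel$, sitting in degrees $[-k]\la k\ra$ versus $[k]\la -k\ra$, with the connections $d^++cud^-$ and $d^-+cud^+$ respectively.

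Once this is seen, the proof collapses to a diagonal rescaling: the map $\teta_i$ given on the $k$th term by multiplication by $(cu)^k$ has the correct bidegree $[2k]\la -2k\ra$ and intertwines the two connections on the nose (the $d^+$ summand on one side matches $cud^+$ on the other after the shift in $k$, and likewise for $d^-$). Its inverse is multiplication by $(cu)^{-k}$, which exists precisely because $u$ is inverted and $c\ne 0$. No thick crossings, bubble slides, square-flop identities, Gaussian elimination, or appeals to Propositions~\ref{contractlem} or~\ref{extensionprop} are needed; the isomorphism is explicit and strict, not merely up to homotopy. Your elaborate plan is not wrong in spirit, but it is fighting a problem that evaporates once the terms are correctly identified.
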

\begin{proof}
Suppose $\l_i \ge 0$ (the case $\l_i \le 0$ is similar). If we ignore the differentials then
$$\tau_{i,c} \1_\l \cong \bigoplus_{k \ge 0} \sE_i^{(k)} \sF_i^{(\l_i+k)} \1_\l [-k] \la k \ra \ \ \text{ and } \ \ \tau'_{i,c} \1_\l \cong \bigoplus_{k \ge 0} \sE_i^{(k)} \sF_i^{(\l_i+k)} \1_\l [k] \la -k \ra.$$
We define a map $\teta_i \colon \tau_{i,c} \1_\l \rightarrow \tau'_{i,c} \1_\l $ by using, for $k \ge 0$, the maps
$$(cu)^k: \sE_i^{(k)} \sF_i^{(\l_i+k)} [-k] \la k \ra \to \sE_i^{(k)} \sF_i^{(\l_i+k)} [k] \la -k \ra.$$
It is not hard to check that this defines a map of curved complexes. Similarly we can define $\teta_i': \tau'_{i,c} \1_\l \rightarrow \tau_{i,c} \1_\l$ by using
$$(cu)^{-k}: \sE_i^{(k)} \sF_i^{(\l_i+k)} [k] \la -k \ra \to \sE_i^{(k)} \sF_i^{(\l_i+k)} [-k] \la k \ra.$$
This is also easily seen to be a map of curved complexes. Moreover, $\teta_i$ and $\teta_i'$ are clearly inverses of each other.
\end{proof}

\begin{proposition}
\label{deformedbraidprop}
If $\cK$ is an integrable 2-representation of $\cal{U}_Q(\mf{g})$ then inside $\tKom(\cK)[u]$ the curved complexes $\tau_{i,c}$ and $\tau'_{i,c}$ satisfy the braid group relations of $\Br_{\mf{g}}$. More precisely, for any $b \in \End^2( \1_\l )$, we have
\begin{align*}
\tau'_{i,(s_i(b), \alpha_i)} \tau_{i,(b, \alpha_i)} \1_\l \cong \1_\l \cong \tau_{i,(s_i(b), \alpha_i)} \tau'_{i, (b, \alpha_i)} \1_\l & \\
\tau_{i,(s_j(b), \alpha_i)} \tau_{j, (b, \alpha_j)} \1_\l \cong \tau_{j,(s_i(b), \alpha_j)} \tau_{i,(b,\alpha_i)} \1_\l & \ \ \text{ if } \la i,j \ra = 0 \\
\tau_{i, (s_js_i(b), \alpha_i)} \tau_{j, (s_i(b), \alpha_j)} \tau_{i,(b,\alpha_i)} \1_\l \cong \tau_{j, (s_is_j(b), \alpha_j)} \tau_{i, (s_j(b), \alpha_i)} \tau_{j, (b,\alpha_j)} \1_\l & \ \ \text{ if } \la i,j \ra = -1.
\end{align*}
where we suppress the subscript of $Q$ from the pairing $(\cdot,\cdot)_Q$ for readability.
\end{proposition}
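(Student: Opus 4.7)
The plan is to bootstrap from the braid relations for the undeformed Rickard complexes (Proposition \ref{originalbraidprop}) by invoking the extension principle for homotopy equivalences between deformations (Proposition \ref{extensionprop}). All three braid relations follow the same template, so I would focus on the triple braid $\tau_i\tau_j\tau_i\1_\l \simeq \tau_j\tau_i\tau_j\1_\l$ when $\la i,j\ra = -1$, and then indicate the minor modifications needed for the other two cases.

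First I would verify that the two sides define curved complexes with matching source and target curvatures. Iterating Proposition \ref{prop:curved}, each factor $\tau_{i,(b',\alpha_i)_Q}\1_\mu$ is an $(s_i(b'),b')$-factorization, so composition transports curvature through successive simple reflections. The labeling in the statement is designed exactly so that intermediate curvatures match at every junction; the source curvature on both sides is $b$ at weight $\l$, while the target curvatures are $s_is_js_i(b)$ and $s_js_is_j(b)$ at weights $s_is_js_i(\l)$ and $s_js_is_j(\l)$ respectively. By the braid relation in the Weyl group $W_{\mf{g}}$ together with the $W$-action on $\bigoplus_{\l}\End^2(\1_\l)$ described after \eqref{eq:Qform}, these agree on the nose.

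Next, setting $u=0$ collapses both sides to ordinary compositions of Rickard complexes, which are homotopy equivalent via some $\phi_0$ by Proposition \ref{originalbraidprop}. Each $\tau_i$ is invertible with inverse $\tau'_i$ by the same proposition, so both triple compositions are invertible objects of $\Kom(\cal{K})$. Proposition \ref{extensionprop} then extends $\phi_0$ to a homotopy equivalence $\phi = \phi_0 + u\phi_1 + u^2\phi_2 + \cdots$ of the curved complexes in $\tKom(\cal{K})[u]$, yielding the desired braid relation.

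For the remaining cases I would use $s_is_j = s_js_i$ when $\la i,j\ra = 0$, and for the inverse relation $\tau'_{i,(s_i(b),\alpha_i)}\tau_{i,(b,\alpha_i)}\1_\l \cong \1_\l$ observe that $\1_\l$ is the trivial $(b,b)$-factorization with zero connection, while the composition on the left carries curvature $(s_i(s_i(b)),b)=(b,b)$ since $s_i^2=\Id$; after that the same invertibility-plus-extension argument applies. The only nontrivial point is the curvature bookkeeping in Step 1; the extension step is essentially formal once invertibility is known, with integrability (Definition \ref{def:integrable}(\ref{eq:finite})) ensuring that the obstructions $\theta_k$ living in negative-degree Hom spaces between invertible complexes are null-homotopic, which is what drives the inductive construction of $\phi_1,\phi_2,\ldots$ in the proof of Proposition \ref{extensionprop}.
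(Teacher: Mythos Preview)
Your proposal is correct and follows essentially the same approach as the paper's proof: invoke Proposition~\ref{originalbraidprop} for the undeformed homotopy equivalences and then apply Proposition~\ref{extensionprop} to extend them to the curved setting. Your write-up is in fact more detailed than the paper's two-line argument, since you make explicit the curvature bookkeeping (checking that both sides are $(w(b),b)$-factorizations for the same Weyl word $w$) and the invertibility hypothesis needed to apply Proposition~\ref{extensionprop}, both of which the paper leaves implicit.
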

\begin{proof}
Proposition \ref{originalbraidprop} gives homotopy equivalences of undeformed complexes for each braid relation.  By Proposition~\ref{extensionprop} these homotopies extend uniquely to homotopies of the corresponding curved complexes.
\end{proof}

Finally, we have the following relatively straightforward result which we will use later.

\begin{lemma}\label{deformedEFlemma}
For $b \in \End^2_{\cal{U}_Q(\mf{g})}( \1_\l )$, the $1$-morphisms $\cE_i^{(k)} \1_\l$ and $\1_\l \cF_i^{(k)}$ (thought of as complexes with only one term) are $(b,b)$-factorizations if $(b,\alpha_i)_Q = 0$.
\end{lemma}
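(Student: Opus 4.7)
Since the complex $V = \cE_i^{(k)}\1_\l$ has zero connection ($\Delta = 0$), the condition for $(V,0)$ to be a $(b,b)$-factorization reduces to the identity $b \cdot \Id_V = \Id_V \cdot b$ in $\End^2(V)$, where on the left $b$ is placed in the region of weight $\l+k\alpha_i$ and on the right in the region of weight $\l$. By $\Bbbk$-linearity it suffices to verify this for $b = b_j$ a single bubble for each $j \in I$, while tracking how the scalar $(b_j,\alpha_i)_Q$ enters.

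The plan is then a direct case analysis using the bubble slide identities proved in Section~\ref{sec-bubbles}. When $\alpha_i\cdot\alpha_j = 0$ and $j \neq i$, equation \eqref{eq:Xk} shows that the bubble slides past $\cE_i^{(k)}$ with no correction term, and simultaneously $(b_j,\alpha_i)_Q = 0$. When $j = i$, applying \eqref{eq:bub-slide} with $a=k$ and $j=1$ (using $b_i^{(\ast+0)} = \Id$) yields $\cE_i^{(k)}\cdot b_i(\l) = b_i(\l+k\alpha_i)\cdot\cE_i^{(k)} - 2\,\varepsilon_1\cdot\cE_i^{(k)}$, so the commutator is exactly $2\,\varepsilon_1\cdot\cE_i^{(k)} = (b_i,\alpha_i)_Q\cdot\varepsilon_1\cdot\cE_i^{(k)}$. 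When $\alpha_i\cdot\alpha_j = -1$, equation \eqref{eq:X0} gives $b_j(\l+k\alpha_i)\cdot\cE_i^{(k)} - \cE_i^{(k)}\cdot b_j(\l) = v_{ji}\cdot\varepsilon_1\cdot\cE_i^{(k)}$, matching $(b_j,\alpha_i)_Q = v_{ji}$.

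Assembling the three cases linearly, for any $b \in \End^2(\1_\l)$ we obtain the uniform identity
\[
b\cdot\Id_{\cE_i^{(k)}\1_\l} \;-\; \Id_{\cE_i^{(k)}\1_\l}\cdot b \;=\; (b,\alpha_i)_Q \cdot \varepsilon_1\cdot \Id_{\cE_i^{(k)}\1_\l},
\]
whose right-hand side vanishes under the hypothesis $(b,\alpha_i)_Q = 0$. This finishes the claim for $\cE_i^{(k)}\1_\l$. The argument for $\1_\l\cF_i^{(k)}$ is entirely parallel, using the downward analogs of \eqref{eq:bub-slide}, \eqref{eq:X0} and \eqref{eq:Xk} (obtained by rotation through the biadjoint structure); since $(b,-\alpha_i)_Q = -(b,\alpha_i)_Q$, the hypothesis on $b$ is unchanged. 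The only mild obstacle is the bookkeeping needed to match signs and normalizations across the three cases with $(b_j,\alpha_i)_Q = (-1)^{(\alpha_i,\alpha_j)} v_{ji}(\alpha_j,\alpha_i)$; once that is done cleanly, the result is a direct specialization of identities already established in Section~\ref{sec-bubbles}.
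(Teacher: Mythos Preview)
Your argument is correct and follows the same strategy as the paper's one-line proof---reduce the $(b,b)$-factorization condition to showing that $b$ commutes past the thick $i$-strand and invoke bubble slide identities---though your case analysis via \eqref{eq:bub-slide}, \eqref{eq:X0}, and \eqref{eq:Xk} is more thorough than the paper's single citation of \eqref{eq:Xk}. In particular, your uniform identity $b\cdot\Id_{\cE_i^{(k)}\1_\l} - \Id_{\cE_i^{(k)}\1_\l}\cdot b = (b,\alpha_i)_Q\,\varepsilon_1\cdot\Id_{\cE_i^{(k)}\1_\l}$ makes transparent why the hypothesis $(b,\alpha_i)_Q=0$ is exactly what is needed, whereas the paper's reference to \eqref{eq:Xk} alone literally covers only the basis vectors $b_j$ with $\alpha_i\cdot\alpha_j=0$.
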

\begin{proof}
One must check that acting on the left by $b$ is the same as acting on the right by $b$. The condition $(b,\alpha_i)_Q = 0$ guarantees this, see \eqref{eq:Xk}.
\end{proof}

\section{Application: deformed $\sl_m$ homology} \label{sec-slm}
Since the $2$-categories $\cal{U}_Q(\mf{sl}_m)$ and $\cal{U}_{Q'}(\mf{sl}_m)$ are isomorphic for all choice of scalars $Q$ and $Q'$ (see \cite[Theorem 3.5]{Lau-param}) throughout this section and the next we fix the choice of scalars from Remark~\ref{weylactionstandard} so that $(b_j,\alpha_i)_Q =  ( \alpha_j, \alpha_i )$.

\subsection{Background}\label{sec:background1}

We briefly review the construction of $\sl_m$ homology from \cite{Cautis}. The starting point is the $2$-category $\Ucat_Q$ where the Cartan data is that of $\sl_{2N}$ for some large $N$. The $N$ will depend on the link (the more complicated the presentation of the link the bigger the $N$) or we can consider the limit and take the more canonical choice $N = \infty$.

For any fixed $d \in \Z$ a weight $\l=(\l_1, \dots ,\l_{2N-1})$ in $\Ucat_Q(\mf{sl}_{2N})$ corresponds to a sequences $\uk = (k_1, \dots, k_{2N})$ of integers $k_i \in \Z$ determined by
\begin{align}
\l_i &= k_{i+1} - k_i \nn
\\
\sum_i k_i &= d
\end{align}
(when such a solution exists). We will use $\l$ and $\uk$ interchangeably.

For our purposes we set $d = mN$ and take $\Ucat_Q^m$ to be the quotient category which kills any weight $\uk$ where either $k_i < 0$ or $k_i > m$ for some $i$. This is equivalent to killing any weight which is zero in the $\sl_{2N}$ irreducible representation $V_{m \Lambda_N}$ with highest weight $m \Lambda_N$.

% For the purposes of defining the knot homology, it helpful to pass from $\Ucat_Q(\mf{sl}_{2N})$ to $\Ucat_Q(\mf{gl}_{2N})$, see Section 2.3 of \cite{Lau-param}.
In this notation, the roots correspond to
$$\alpha_i = (0,\dots,0,-1,1,0,\dots,0)$$
where the $-1$ occurs in position $i$ and $m \Lambda_N = (\u0,\um) = (0,\dots,0,m,\dots,m)$ where there are a total of $N$ $0$'s and $N$ $m$'s. The Weyl group, which can be identified with $S_{2N}$, then acts by permuting these sequences in the usual way. As usual we denote by $\dot{\Ucat}_Q^m$ the Karoubi envelope of $\Ucat_Q^m$.  In this way $\dot{\Ucat}_Q^m$ is an integrable $2$-representation in the sense of Definition~\ref{def:integrable}.

Given a weight $\uk$, denote by $\rho(\uk)$ the sequence obtained by removing all $k_j \in \{ 0, m \}$.  Let $S(\uk)$ denote the set of weights $\uk'$ such that $\rho(\uk) = \rho(\uk')$.

\begin{lemma} \cite[Lemma 7.1]{Cautis} \label{lem:permute}
For any non-zero weight $\uk$ as above and any $\uk' \in S(\uk)$ there is a canonical isomorphism $\1_{\uk} \to \1_{\uk'}$ in $\dot{\Ucat}_Q^m$.
\end{lemma}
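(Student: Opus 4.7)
The plan is to reduce to adjacent transpositions of an extreme entry with its neighbour, construct each elementary isomorphism as a divided power, and then verify coherence.

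Since the non-extreme entries of $\uk$ and $\uk'$ agree in $\rho(\uk) = \rho(\uk')$ with the same relative order, the two weights differ only in how the $0$s and $m$s are interspersed amongst the fixed middle values. Any such shuffling factors into a sequence of adjacent swaps, each moving a single $0$ or $m$ past an immediate neighbour. It therefore suffices to produce a canonical isomorphism for each elementary swap, and the key cases are $(0,a) \leftrightarrow (a,0)$, $(a,m) \leftrightarrow (m,a)$, and $(0,m) \leftrightarrow (m,0)$ at some pair of positions $(i,i+1)$.

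For the prototype case $(k_i,k_{i+1}) = (0,a)$ in $\uk$ and $(a,0)$ in $\uk'$, we have $\l_i = a$, and the candidate isomorphism is the divided power $\cF_i^{(a)}\1_\uk : \uk \to \uk'$ with tentative inverse $\cE_i^{(a)}\1_{\uk'} : \uk' \to \uk$. To verify invertibility in $\dot{\Ucat}_Q^m$, I would apply the standard $\mf{sl}_2$ decomposition of divided powers,
\begin{equation*}
\cE_i^{(a)}\cF_i^{(a)}\1_\uk \;\cong\; \bigoplus_{k \geq 0} \left[\begin{array}{c} a \\ k \end{array}\right] \cF_i^{(a-k)}\cE_i^{(a-k)}\1_\uk,
\end{equation*}
and observe that for each $k < a$ the inner $\cE_i^{(a-k)}\1_\uk$ lands on a weight whose $i$-th entry equals $-(a-k) < 0$, and hence is killed in $\Ucat_Q^m$. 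Only the $k = a$ summand survives, giving $\cE_i^{(a)}\cF_i^{(a)}\1_\uk \cong \1_\uk$. A symmetric argument yields $\cF_i^{(a)}\cE_i^{(a)}\1_{\uk'} \cong \1_{\uk'}$, using that $\cF_i\1_{\uk'}$ lands on a weight with negative $(i+1)$-st coordinate. The remaining cases, involving $m$ in place of $0$ or the swap of an adjacent $(0,m)$ pair, follow by the same vanishing mechanism after interchanging the roles of $\cE_i$ and $\cF_i$ (and of $0$ and $m$).

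Finally, canonicalness means that the isomorphism $\uk \to \uk'$ does not depend on the chosen sequence of elementary swaps: any two factorisations of the underlying permutation into adjacent transpositions should produce $2$-isomorphic composite $1$-morphisms. For disjoint indices $i,j$ the relevant divided powers commute up to canonical $2$-isomorphism; for neighbouring indices, the coherence follows from higher relations of the thick calculus in $\dot{\Ucat}_Q$. I expect the main technical obstacle to be organising this coherence bookkeeping cleanly, but it is independent of the quotient structure and reduces to standard facts in the theory of categorified quantum groups.
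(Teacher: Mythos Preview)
Your approach is correct and essentially the same as the paper's: both reduce to a sequence of adjacent swaps each involving an extreme entry $0$ or $m$, and both realize each elementary isomorphism via a single divided power. The paper packages these divided powers as Rickard complexes $\tau_i$ (which collapse to one term precisely when $k_i$ or $k_{i+1}$ is extreme, so that $\tau_i^2\1_{\uk}\cong\1_{\uk}$), whereas you write down $\cE_i^{(a)}$, $\cF_i^{(a)}$ directly and verify invertibility by the $\mf{sl}_2$ decomposition; these are the same $1$-morphisms up to an overall grading shift.

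The one place the paper's phrasing buys something is canonicity. By working with $\tau_i$, independence of the chosen factorisation into swaps follows immediately from the braid relations of Proposition~\ref{originalbraidprop} together with $\tau_i^2\cong\1$ on extreme weights. Your final paragraph gestures at this (``higher relations of the thick calculus'') but in practice the cleanest way to organise the coherence is exactly to recognise your divided powers as the degenerate $\tau_i$'s and invoke the braid relations, rather than checking thick-calculus identities such as $\cE_2^{(m)}\cE_1^{(m-a)}\cF_2^{(a)}\cF_1^{(m)}\cong\cF_1^{(a)}\cE_2^{(m-a)}$ by hand.
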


\begin{proof}
The proof amounts to the fact that if $\uk' \in S(\uk)$, then there exists a sequence of elementary transpositions $s_i$ taking $\uk'$ to $\uk$ with the property that one of the entries being switched is in $\{0,m\}$. However, if $k_i$ or $k_{i+1}$ is in $\{0,m\}$, then $\tau_i^2\1_{\uk}  \cong \1_{\uk}$.
\end{proof}

\begin{lemma}\label{lem:homs}
The space of $1$-morphisms $\Hom_{\dot{\Ucat}_Q^m}(\1_{(\u0,\um)}, \1_{(\u0,\um)})$ is spanned by direct sums of $ \1_{(\u0,\um)}$ (together with shifts). Moreover,
$$\End^*_{\dot{\Ucat}_Q^m}( \1_{(\u0,\um)}) \cong \Bbbk[e_1, \dots, e_m]$$
where $e_j$ is the degree $2j$ fake bubble labeled by $N$.
\end{lemma}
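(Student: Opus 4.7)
The plan is to exploit the fact that $(\u0,\um)$ behaves as a ``highest weight'' in the quotient, which drastically restricts both the $1$-morphisms out of this object and the bubbles acting on it.

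First, I would establish the basic vanishing at this weight. Recall that $\cE_i$ sends $k_i \mapsto k_i - 1$ and $k_{i+1} \mapsto k_{i+1} + 1$, so applying any $\cE_i$ to $(\u0,\um)$ takes us outside the box $[0,m]^{2N}$; hence $\cE_i \1_{(\u0,\um)} = 0$ in $\dot{\Ucat}_Q^m$ for every $i$. A parallel check shows $\cF_i \1_{(\u0,\um)} = 0$ for all $i \neq N$, while $\cF_N \1_{(\u0,\um)}$ lands inside the box at $(0,\dots,0,1,m-1,m,\dots,m)$. In particular $\cF_N \cE_N \1_{(\u0,\um)} = 0$, and the $\mf{sl}_2$-decomposition \eqref{eq:sl2} at node $N$ (where $\l_N = m$) then gives $\cE_N \cF_N \1_{(\u0,\um)} \cong \bigoplus_{[m]} \1_{(\u0,\um)}$.

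For the first assertion, any $1$-morphism in $\Hom_{\dot{\Ucat}_Q^m}(\1_{(\u0,\um)},\1_{(\u0,\um)})$ is a summand (in the Karoubi envelope) of a composition of $\cE$'s and $\cF$'s starting and ending at $(\u0,\um)$. I would argue by induction on the total length of such a composition. Using the mixed relations \eqref{mixed_rel-cyc} together with Lemma \ref{lem:permute}, any $\cE_i, \cF_i$ with $i \neq N$ can be pushed to the rightmost position, where it vanishes against $\1_{(\u0,\um)}$ by the basic vanishing above. What remains are compositions involving only $\cE_N$ and $\cF_N$, and these decompose via iterated use of the $\mf{sl}_2$-relation (or equivalently thick calculus at node $N$) into direct sums of shifts of $\1_{(\u0,\um)}$.

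For the endomorphism algebra, \cite[Proposition 3.6]{KL3} asserts that every $2$-endomorphism of $\1_\l$ in $\Ucat_Q$ is a polynomial in the bubbles $b_j(\l)$. At $(\u0,\um)$, bubbles labeled $i \neq N$ vanish since they factor through either $\cE_i \cF_i \1_{(\u0,\um)}$ or $\cF_i \cE_i \1_{(\u0,\um)}$, both of which are zero by the basic vanishing. Only bubbles at node $N$ survive, and by the infinite Grassmannian relations at $\l_N = m$, these are controlled by the fake clockwise bubbles $e_1, \dots, e_m$ of degrees $2, 4, \dots, 2m$ together with the higher real bubbles. The last step is to verify that in $\dot{\Ucat}_Q^m$ the higher real clockwise bubbles are polynomial expressions in $e_1,\dots,e_m$, and that no spurious relations among $e_1,\dots,e_m$ are introduced.

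The main obstacle is precisely this last step: identifying the cyclotomic-type relation that truncates the bubble algebra to $\Bbbk[e_1,\dots,e_m]$. I expect the cleanest approach is to invoke the geometric $2$-representation of $\dot{\Ucat}_Q^m$ on convolution varieties from \cite{CKL-skew,Cautis}, under which the object $\1_{(\u0,\um)}$ corresponds to a space whose endomorphism algebra can be computed to be the equivariant cohomology of a rank-$m$ vector bundle, agreeing with $\Bbbk[e_1,\dots,e_m]$; combined with a faithfulness statement for this $2$-representation on $\End^*(\1_{(\u0,\um)})$, this yields the claimed isomorphism.
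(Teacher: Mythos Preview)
Your treatment of the first assertion and of the vanishing of $i\neq N$ bubbles matches the paper. The difference is in the final truncation step, which you flag as the main obstacle and propose to resolve via the geometric $2$-representation. The paper handles this purely diagrammatically, and you already have the key ingredient in hand: since $\cE_N \1_{(\u0,\um)} = 0$, every \emph{real} counterclockwise $N$-bubble vanishes. At $\l_N = m$ the undotted counterclockwise bubble is $\ast + (m+1)$, so this kills all counterclockwise bubbles of degree $\geq 2(m+1)$. Feeding this into the infinite Grassmannian relations $\sum_{x+y=n} (\text{cw}_{\ast+x})(\text{ccw}_{\ast+y}) = \delta_{n,0}$ for $n > m$ recursively expresses every clockwise bubble of degree $> 2m$ as a polynomial in $e_1,\dots,e_m$. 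So the cyclotomic-type relation you were looking for is precisely the vanishing of the undotted $N$-bubble, and no geometric input is needed for generation.

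For linear independence the paper simply observes that the quotient is defined by killing weights outside $[0,m]^{2N}$, which forces real bubbles to vanish but imposes no direct relation on the fake bubbles $e_1,\dots,e_m$. Your proposal to exhibit a $2$-representation factoring through $\dot{\Ucat}_Q^m$ in which these bubbles act independently is a perfectly good (and arguably more airtight) way to certify this; the equivariant flag $2$-representation of Proposition~\ref{prop:gamma} already does the job and is lighter than invoking the affine Grassmannian.
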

\begin{proof}
The first claim is a consequence of the fact that $(\u0,\um)$ is a highest weight in $\dot{\Ucat}_Q^m$, which means that all $\cal{E}_i$ act by zero. From this it also follows that all counterclockwise bubbles labeled by $i \in I$ vanish in $\End^*_{\dot{\Ucat}_Q^m}( \1_{(\u0,\um)})$. Note that the $i$-labeled counterclockwise bubble with no dots has degree two if $i \neq N$ and has degree $2(m+1)$ if $i = N$.  Hence, in the quotient $\dot{\Ucat}_Q^m$, we have
\[
0 \;\; = \;\; \hackcenter{ \begin{tikzpicture} [scale=.8]
    % Bubble
 \draw[<-]  (-.75,1) arc (360:180:.45cm) [thick];
 \draw[->](-.75,1) arc (0:180:.45cm) [thick];
     %\filldraw  [black] (-1.55,.75) circle (2.5pt);
       % \node at (-1.3,.3) { $\scriptstyle -\l_i -1 + 1$};
        \node at (-1.4,1.7) { $i $};
 %% L
 \node at (-.2,1.5) { $(\u0,\um)$};
\end{tikzpicture}}
\;\; = \;\;
\hackcenter{ \begin{tikzpicture} [scale=.8]
    % Bubble
 \draw  (-.75,1) arc (360:180:.45cm) [thick];
 \draw[->](-.75,1) arc (0:180:.45cm) [thick];
     \filldraw  [black] (-1.55,.75) circle (2.5pt);
        \node at (-1.3,.3) { $\scriptstyle (-\l_i -1) +\l_i+ 1$};
        \node at (-1.4,1.7) { $i $};
 %% L
 \node at (-.2,1.5) { $(\u0,\um)$};
\end{tikzpicture}}
\;\; =: \;\;
\left\{
  \begin{array}{ll}
   \hackcenter{ \begin{tikzpicture} [scale=.8]
    % Bubble
 \draw  (-.75,1) arc (360:180:.45cm) [thick];
 \draw[->](-.75,1) arc (0:180:.45cm) [thick];
     \filldraw  [black] (-1.55,.75) circle (2.5pt);
        \node at (-1.3,.3) { $\scriptstyle \ast + 1$};
        \node at (-1.4,1.7) { $i $};
 %% L
 \node at (-.2,1.5) { $(\u0,\um)$};
\end{tikzpicture}} & \hbox{if $i \neq N$;} \\
    \hackcenter{ \begin{tikzpicture} [scale=.8]
    % Bubble
 \draw  (-.75,1) arc (360:180:.45cm) [thick];
 \draw[->](-.75,1) arc (0:180:.45cm) [thick];
     \filldraw  [black] (-1.55,.75) circle (2.5pt);
        \node at (-1.3,.3) { $\scriptstyle \ast + m+1$};
        \node at (-1.4,1.7) { $N$};
 %% L
 \node at (-.2,1.5) { $(\u0,\um)$};
\end{tikzpicture}}, & \hbox{otherwise,}
  \end{array}
\right.
\] so that all positive degree $i$ labeled bubbles with $i \neq N$ vanish and all $N$-labeled bubbles of degree greater than $m$ also vanish.  The $N$-labeled clockwise bubbles of degrees $0$ to $m$ are all fake bubbles, so the vanishing of weights $\1_\uk = 0$ if $k_i \not\in [0,m]$ does not require these to be zero and they form a linearly independent set of endomorphisms in $\End^*_{\dot{\Ucat}_Q^m}( \1_{(\u0,\um)})$.
\end{proof}

Now consider an oriented link $L$ whose components are coloured by elements of $[0,m]$. A colour $k$ should be thought of as a labeling of the strand by the fundamental representation $V_{\Lambda_k}$ of $\sl_m$. We now explain how $L$ induces a $1$-morphism
$$\Psi(L) \in \Hom_{\Kom(\dot{\Ucat}_Q^m)}(\1_{(\u0,\um)}, \1_{(\u0,\um)}).$$

To obtain this $1$-morphism from $L$ we decompose the link into a composition of caps, cups and crossings as shown in Figure \ref{fig:1}.
At each level of this decomposition we can associate an object $\uk$ of $\dot{\Ucat}_Q^m$ where we add in 0's or $m$'s if needed so that $\uk \in \Z^{2N}$ with $\sum_{i}k_i = mN$.  By Lemma~\ref{lem:permute} there is a canonical isomorphism between any two objects $\uk$ associated to a given level of the decomposition of $L$.
 To a cap/cup we associate the $1$-morphisms
\begin{align}
\label{eq:cap} & \cE_i^{(k_i)}: (\dots,k_i,m-k_i,\dots) \to (\dots,0,m,\dots) \\
\label{eq:cup} & \cF_i^{(k_i)}: (\dots,0,m,\dots) \to (\dots,k_i,m-k_i,\dots)
\end{align}
where $i$ denotes the position of the cap/cup. To the four over crossings in Figure \ref{fig:1} we associate maps $\tau_i\1_{\uk}$, $\tau_i\1_{\uk} [k_i] \la -k_i \ra$, $\tau_i \1_{\uk}[m-k_{i+1}]\la -m+k_{i+1} \ra$ and $\tau_i\1_{\uk} [-k_{i+1}+k_i] \la k_{i+1}-k_i \ra$ respectively. The corresponding four under crossings are the associated inverse maps.

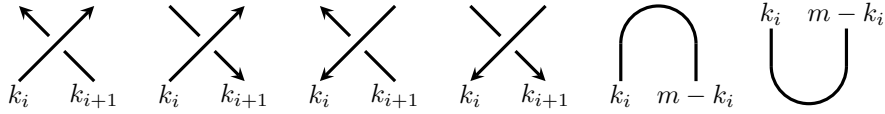
\begin{figure}[!h]
\centering
\begin{tikzpicture}[>=stealth]
\draw [->](0,0) -- (1,1) [very thick];
\draw [->](0.4,0.6) -- (0,1) [very thick];
\draw [-](1,0) -- (0.6,0.4) [very thick];
\draw [shift={+(0,-0.2)}](0,0) node {$k_i$};
\draw [shift={+(0,-0.2)}](1,0) node {$k_{i+1}$};

\draw [shift={+(2,0)}][->](0,0) -- (1,1) [very thick];
\draw [shift={+(2,0)}][-](0.4,0.6) -- (0,1) [very thick];
\draw [shift={+(2,0)}][<-](1,0) -- (0.6,0.4) [very thick];
\draw [shift={+(2,0)}][shift={+(0,-0.2)}](0,0) node {$k_i$};
\draw [shift={+(2,0)}][shift={+(0,-0.2)}](1,0) node {$k_{i+1}$};

\draw [shift={+(4,0)}][<-](0,0) -- (1,1) [very thick];
\draw [shift={+(4,0)}][->](0.4,0.6) -- (0,1) [very thick];
\draw [shift={+(4,0)}][-](1,0) -- (0.6,0.4) [very thick];
\draw [shift={+(4,0)}][shift={+(0,-0.2)}](0,0) node {$k_i$};
\draw [shift={+(4,0)}][shift={+(0,-0.2)}](1,0) node {$k_{i+1}$};

\draw [shift={+(6,0)}][<-](0,0) -- (1,1) [very thick];
\draw [shift={+(6,0)}][-](0.4,0.6) -- (0,1) [very thick];
\draw [shift={+(6,0)}][<-](1,0) -- (0.6,0.4) [very thick];
\draw [shift={+(6,0)}][shift={+(0,-0.2)}](0,0) node {$k_i$};
\draw [shift={+(6,0)}][shift={+(0,-0.2)}](1,0) node {$k_{i+1}$};

\draw [shift={+(8,.5)}](0,0) -- (0,-.5) [very thick];
\draw [shift={+(8,.5)}](1,0) -- (1,-.5)[-] [very thick];
\draw [shift={+(8,.5)}](0,0) arc (180:0:.5) [very thick];
\draw [shift={+(8,0)}][shift={+(0,-0.2)}](0,0) node {$k_i$};
\draw [shift={+(8,0)}][shift={+(0,-0.2)}](1,0) node {$m-k_i$};

\draw [shift={+(10,.2)}](0,0) -- (0,.5) [-][very thick];
\draw [shift={+(10,.2)}](1,0) -- (1,.5) [very thick];
\draw [shift={+(10,.2)}](0,0) arc (180:360:.5) [very thick];
\draw [shift={+(10,0)}][shift={+(0,.9)}](0,0) node {$k_i$};
\draw [shift={+(10,0)}][shift={+(0,.9)}](1,0) node {$m-k_i$};
\end{tikzpicture}
\caption{The cap and cup can have either orientation.}\label{fig:1}
\end{figure}
Composing these morphisms together gives us $\Psi(L)$. Note that, by Lemma \ref{lem:homs}, this is a complex with terms direct sums of $\1_{(\u0,\um)}$. To obtain the link invariant $H_{\sl_m}^{*,*}(L)$ we apply the functor $\Hom_{\dot{\Ucat}_Q^m}(\1_{(\u0,\um)}, \bullet)$. By Lemma \ref{lem:homs} this has the effect of replacing each summand $\1_{(\u0,\um)}$  with $\Bbbk$.  For more details see \cite[Section 7]{Cautis}.

\begin{remark}
If we apply the functor $\Hom^*_{\dot{\Ucat}_Q^m}(\1_{(\u0,\um)}, \bullet)$ (i.e. the direct sum over all degrees) then by Lemma \ref{lem:homs} this has the effect of replacing each $\1_{(\u0,\um)}$    with $\Bbbk[e_1, \dots, e_m]$ with $e_j$ corresponding to the fake bubble of degree $2j$, see Lemma~\ref{lem:homs}. The resulting homology is a deformation of $H_{\sl_m}^{*,*}(L)$ over $\Bbbk[e_1, \dots, e_m]$~\cite{Wu2,RoseW}.
\end{remark}

\subsection{Deformations}\label{sec:defs1}
To obtain the desired deformation of the construction above we will label crossings with curved complexes. More precisely, given $\uy = (y_1,\dots,y_{2N})$\footnote{Here our parameters $y_i$ are deformation parameters rather than formal variables as in \cite{GHy}} we will denote by $b_\uy \in {\mf{h}}^*_\Bbbk \cong \End^2_{\dot{\Ucat}_Q}(\1_\uk)$ the $2$-morphism determined by the conditions $( b_\uy, \alpha_i ) = -y_i+y_{i+1}$. In this notation, we have $( b_\uy, \uk ) = \uy \cdot \uk$.

We now work in the quotient $\dot{\Ucat}_Q^m$. We denote by $B_\uy$ the image of $b_\uy$ in the 2-representation $\dot{\Ucat}_Q^m$. Given a link $L$ we decompose it as before into cups, caps and crossings and associate to each the corresponding curved complexes. More precisely, to the first crossing in Figure \ref{fig:1} we associate $\tau_{i,-y_i+y_{i+1}} \1_{\uk}$ which is a $(B_{s_i(\uy)}, B_\uy)$-factorization (and similarly for the other crossings). Whenever we see a cap/cup we associate the curved complexes as in (\ref{eq:cap}) and (\ref{eq:cup}) {\it{and}} we impose the condition that $\uy \cdot \alpha_i = 0$ (namely $y_i=y_{i+1}$, cf. Lemma \ref{deformedEFlemma}).

\begin{theorem}\label{thm:deflink}
Suppose $L = L_1 \cup \dots \cup L_c$ is an oriented, coloured link with $c$ components. Then the construction above defines a family of link homologies $\widetilde{H}_{\sl_m}^{*,*}(L)$ parameterized by $\uz \in \Bbbk^c$. This homology has the usual splitting properties from Batson-Seed \cite{BS}. In particular, for distinct values of $\uz$ the deformed homology of $L$ is isomorphic to the homology of the disjoint union of its components, and moreover, there exists a spectral sequence starting with $H_{\sl_m}^{*,*}(L)$ and converging to $\widetilde{H}_{\sl_m}^{*,*}(L)$.
\end{theorem}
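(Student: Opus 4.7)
The proof adapts the link invariant construction of \cite{Cautis} to the curved setting of Section \ref{sec-curved}. Attach a formal parameter $y_i$ to each strand appearing at a generic horizontal slice of a diagram of $L$; such a slice then carries a weight $\uk$ together with a vector $\uy \in \Bbbk^{2N}$. Proposition \ref{prop:curved} says $\tau_{i,-y_i+y_{i+1}}\1_\uk$ is a $(B_{s_i(\uy)}, B_\uy)$-factorization, so each crossing interchanges the local parameters $y_i$ and $y_{i+1}$. Lemma \ref{deformedEFlemma} says that a cap/cup morphism $\cE_i^{(k_i)}$ or $\cF_i^{(k_i)}$ is a $(b,b)$-factorization precisely when $(b,\alpha_i)_Q = 0$, i.e.\ when $y_i = y_{i+1}$. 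Tracing these identifications through the diagram, all parameters attached to one connected component of $L$ get identified, leaving exactly one free parameter per component and hence the desired $\uz \in \Bbbk^c$. Composing all elementary curved complexes produces
\[
\widetilde{\Psi}_\uz(L) \in \Hom_{\tKom(\dot{\Ucat}_Q^m)[u]}(\1_{(\u0,\um)},\1_{(\u0,\um)}),
\]
and $\widetilde{H}_{\sl_m}^{*,*}(L)$ is obtained by applying $\Hom(\1_{(\u0,\um)},-)$ as in Section \ref{sec:background1}.

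Invariance under Reidemeister II and III follows from Proposition \ref{deformedbraidprop} applied with the curvatures dictated by the local parameters; the remaining framed Markov moves and planar isotopies reduce to equivalences between invertible undeformed complexes which lift to the curved setting by Proposition \ref{extensionprop}. For the spectral sequence, filter $\widetilde{\Psi}_\uz(L)$ by powers of $u$: the deformation term $ud^-$ has positive $u$-degree, so the associated graded recovers the undeformed complex $\Psi(L)$ (tensored with the polynomial ring in $u$ and $\uz$), yielding a spectral sequence whose $E_1$-page involves $H_{\sl_m}^{*,*}(L)$ and which converges to $\widetilde{H}_{\sl_m}^{*,*}(L)$.

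For the Batson-Seed type splitting, the key observation is that when two distinct components $L_a, L_b$ with parameters $z_a \ne z_b$ meet at a crossing, the curved Rickard complex $\tau_{i, z_b - z_a}$ becomes contractible once $z_b - z_a$ is inverted. Concretely, Theorem \ref{squareflopcor} produces a null-homotopy for the curved differential which, combined with the invertibility of $z_b - z_a$, exhibits $\tau_{i, z_b - z_a}$ as equivalent to the identity braid in the localized category. Iteratively applying this contraction at every inter-component crossing reduces the diagram of $L$ to the split diagram $L_1 \sqcup \cdots \sqcup L_c$ over the locus where the $z_i$ are pairwise distinct. The main obstacle is this last step: constructing the contracting homotopies uniformly in the deformation parameters and verifying their compatibility with the curvature requires combining Theorem \ref{squareflopcor} with Proposition \ref{extensionprop}, whereas the remaining invariance and spectral sequence arguments are largely bookkeeping given the curved framework developed in Section \ref{sec-curved}.
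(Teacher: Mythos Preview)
Your overall shape is close to the paper's, but there are two genuine gaps.

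First, you never explain why $\widetilde{\Psi}_\uz(L)$ is an honest complex rather than a curved one. A priori the composite of all your curved pieces is only a $(z_B,z_A)$-factorization with $z_A,z_B \in \End^2(\1_{(\u0,\um)})$, and curved complexes do not have well-defined cohomology. The paper handles this explicitly: by Lemma~\ref{lem:homs} the degree-two endomorphisms of $\1_{(\u0,\um)}$ are spanned by the single fake bubble $e_1$, and the cap/cup conditions at the very top and bottom of the diagram force the coefficients of $e_1$ on both sides to vanish, so the curvature is zero. You should include this step.

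Second, and more seriously, your splitting argument is wrong. The curved Rickard complex $\tau_{i,c}$ with $c\neq 0$ is neither contractible nor homotopy equivalent to the identity braid; if it were, the deformed homology of a single knot would be trivial. What actually happens is Lemma~\ref{lem:square}: once $u$ is inverted and $c\neq 0$, one has $\tau_{i,c}\1_\l \cong \tau'_{i,c}\1_\l$, i.e.\ the positive crossing becomes isomorphic to the \emph{negative} crossing. This lets you perform crossing changes at every crossing between components carrying distinct parameters $z_a\neq z_b$, and repeated crossing changes unlink the components. That is the Batson--Seed mechanism the paper invokes. Theorem~\ref{squareflopcor} by itself does not produce a contracting homotopy for $\tau_{i,c}$; it computes $d^+d^-+d^-d^+$ as a difference of bubbles, which is exactly what makes $\tau_{i,c}$ a nontrivial curved complex, not a null-homotopic one.
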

\begin{proof}
Consider a presentation $[L]$ of the link $L$ and a choice of $\uy$. The construction above imposes a set of linear relations that the $\uy$ should satisfy (one condition for each cap and cup). We say that $\uy$ is compatible with $[L]$ if it satisfies all these linear conditions.

In this way, for any compatible $\uy$, we obtain a $1$-morphism
\begin{equation} \label{eq:def-deformed}
  \Psi_\uy(L): \Hom_{\tKom(\dot{\Ucat}_Q^m)[u]}(\1_{(\u0,\um)}, \1_{(\u0,\um)}).
\end{equation}
Since, by Lemma \ref{lem:homs}, $\End^2( \1_{(\u0,\um)})$ is spanned by $e_1$ this curved complex is a $(ce_1,c'e_1)$-factorization for some $c,c' \in \Bbbk$. The fact that the description of $L$ begins with a cup and ends with a cap means that we impose the condition that $(c e_1, e_1) = 0 = (c' e_1, e_1)$. Thus $c=c'=0$ and $\Psi_\uy(L)$ is an actual (non-curved) complex.

Next we need to understand why the set of $\uy$ compatible with $[L]$ is indexed by $\Bbbk^c$. This is more easily realized if we present $L$ as a composition of simple cups, a braid $\beta$ and then simple caps. In the construction above this means a composition of the form
$$\1_{(0,m,0,m,\dots,0,m)} (\sE_1^{(\ell_1)} \sE_3^{(\ell_2)} \dots \sE_{2N-1}^{(\ell_N)}) \Psi(\beta) (\sF_1^{(k_1)} \sF_3^{(k_2)} \dots \sF_{2N-1}^{(k_{N})}) \1_{(0,m,0,m,\dots,0,m)}$$
where $k_1, \dots, k_N$ and $\ell_1, \dots, \ell_N$ are the colours of the strands of $\beta$ at the bottom and top respectively while $\Psi(\beta)$ is the curved complex associated to $\beta$ (a composition of $\tau_{i,c}$'s).

We start with a general deformation parameter $\uy=(y_1, \dots, y_{2N})$. The cups impose linear relations
\[
\uy \cdot \alpha_1 = 0, \;  \uy \cdot \alpha_3 = 0,\;  \dots, \; \uy \cdot \alpha_{2\ell-1} = 0
\]
which are equivalent to $y_1=y_2$, $y_3=y_4, \dots, y_{2\ell-1}=y_{2\ell}$. Now $\Psi(\beta)$ is a $(B_{\beta(\uy)}, B_\uy)$-factorization. This means that the caps impose the linear relations
\[
\beta(\uy) \cdot \alpha_1 = 0, \ \ \beta(\uy) \cdot \alpha_3 = 0, \ \ \dots, \ \ \beta(\uy) \cdot \alpha_{2N-1} = 0.
\]
It is easy to see from this description that $\Psi_\uy(L)$ is a $(B_{\beta(\uy)}, B_{\uy})$-factorization and that the linear relations on $\uy$ cut out exactly a family indexed by the components of $L$.

Finally, the splitting properties of $\widetilde{H}_{\sl_m}^{*,*}(L)$ as in Batson-Seed \cite{BS} are a consequence of Lemma \ref{lem:square}.
\end{proof}

\subsection{Clasps} \label{sec:clasps1}

The construction of $\widetilde{H}_{\sl_m}^{*,*}(L)$ above is for links $L$ coloured by fundamental representations $V_{\Lambda_k}$ of $\sl_m$. One way to generalize this to arbitrary representations $V_{\sum_i \Lambda_{k_i}}$ is by cabling and using clasps (projectors) $\cP$ corresponding to the composition
$$V_{\Lambda_{k_1}} \otimes \dots \otimes V_{\Lambda_{k_s}} \to V_{\sum_i \Lambda_{k_i}} \to V_{\Lambda_{k_1}} \otimes \dots \otimes V_{\Lambda_{k_s}}$$
where the first map is the natural projection while the second is the natural inclusion.

This approach was followed in \cite[Theorems 2.2, 2.3]{Cautis} where $\cP$ was defined as a limit $\lim_{\ell \to \infty} (\tau'_{\omega})^{2 \ell}$ where
$$\tau'_{\omega} = (\tau'_1 \dots \tau'_{s-1}) (\tau'_1 \dots \tau'_{s-2}) \dots (\tau'_1 \tau'_2) (\tau'_1)$$
is a full twist. Here, for notational simplicity, we assume that the clasp $\cP$ consists of cabling the first $s$ strands (otherwise the indices $1,\dots,s-1$ would need to be shifted). Also note that our notation is such that $\tau_i'$ in the current paper corresponds to ${\sf T}_i$ in \cite{Cautis}.

\begin{proposition}\label{prop:clasp}
For any $B_\uy \in \End_{\dot{\Ucat}_Q^m}^2(\1_\uk)$ the clasp $\cP \1_\uk \in \Kom^-(\dot{\Ucat}_Q^m)$ deforms to a $(B_\uy,B_\uy)$-factorization $\tcP \in {\tKomm}(\dot{\Ucat}_Q^m)[u]$.
\end{proposition}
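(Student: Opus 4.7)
The plan is to deform the limit construction of the clasp from \cite{Cautis}. Recall that $\cP \1_\uk$ is obtained as the stable limit $\lim_{\ell \to \infty} (\tau'_\omega)^{2\ell} \1_\uk$ inside $\Kom^-(\dot{\Ucat}_Q^m)$, where $\tau'_\omega$ lifts the longest element $\omega_0$ of the symmetric group $S_s$ acting on the cabled strands; the limit exists because the terms in each fixed cohomological degree stabilize as $\ell \to \infty$. The strategy is to deform each finite truncation $(\tau'_\omega)^{2\ell} \1_\uk$ separately and then promote the stabilization maps to the curved setting.

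First, by repeated application of Proposition~\ref{prop:curved} along a reduced expression $\omega_0 = s_{i_1}\cdots s_{i_N}$, composing the curved complexes $\tau'_{i_k, c_k}$ with $c_k = (b_{s_{i_{k+1}}\cdots s_{i_N}(\uy)}, \alpha_{i_k})_Q$ produces a deformation of $\tau'_\omega \1_\uk$ as a $(B_{\omega_0(\uy)}, B_\uy)$-factorization. Since $\omega_0^2 = e$ in $S_s$, composing this deformation with itself yields a $(B_\uy, B_\uy)$-factorization lifting $(\tau'_\omega)^2 \1_\uk$, and iterating gives deformations of every $(\tau'_\omega)^{2\ell} \1_\uk$ as $(B_\uy, B_\uy)$-factorizations. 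Each such truncation is invertible in $\Kom(\dot{\Ucat}_Q^m)$ (as a composition of Rickard complexes, cf.\ Proposition~\ref{originalbraidprop}), so Proposition~\ref{extensionprop} lifts the homotopy equivalences implementing the stabilization to morphisms of the associated curved complexes, at the cost of $u$-correction terms. Because $u$ has homological degree $[2]$, only finitely many $u$-powers contribute in any fixed cohomological degree, so passing to the limit produces a well-defined $(B_\uy, B_\uy)$-factorization $\tcP \in \tKomm(\dot{\Ucat}_Q^m)[u]$.

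The main technical obstacle is coherence: the $u$-corrections provided by Proposition~\ref{extensionprop} are only determined up to null-homotopy, and one must choose them compatibly across the tower $\{(\tau'_\omega)^{2\ell} \1_\uk\}_\ell$ so that the stabilization maps assemble into an actual inverse system whose limit is a single curved complex rather than merely a sequence. The plan is to construct the extensions recursively in $\ell$ and in the $u$-order, applying Proposition~\ref{contractlem} to discard cones of extended equivalences and applying Proposition~\ref{conelem} to detect when adjustments are genuinely null-homotopic. The obstructions to strict compatibility live in graded components of endomorphism spaces of invertible complexes, which by invertibility collapse to $\End^*(\1_\uk)$, so any discrepancy between a chosen extension and the previous stage can be absorbed by modifying earlier $u$-corrections by a null-homotopy; this is the same bootstrapping mechanism used in the proof of Proposition~\ref{extensionprop} itself.
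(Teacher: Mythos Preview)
Your deformation of each finite power $(\tau'_\omega)^{2\ell}\1_\uk$ to a $(B_\uy,B_\uy)$-factorization is fine and agrees with what the paper does implicitly. The gap is in the next step, where you invoke Proposition~\ref{extensionprop} to lift the ``homotopy equivalences implementing the stabilization''. The maps in the tower
\[
(\tau'_\omega)^{2\ell}\1_\uk \longrightarrow (\tau'_\omega)^{2(\ell+1)}\1_\uk
\]
are \emph{not} homotopy equivalences: they are induced by a fixed connecting map $\eta\colon \1_\uk \to (\tau'_\omega)^{2}\1_\uk$, and what makes the limit converge is that $(\tau'_\omega)^{2\ell}\,\mathrm{Cone}(\eta)$ is supported in cohomological degrees $< -d(\ell)$ with $d(\ell)\to\infty$. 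Since the source and target in the tower are genuinely different invertible complexes, the hypothesis of Proposition~\ref{extensionprop} (that $\phi_0$ be a homotopy equivalence) fails, and the argument you sketch for coherence --- which relies on $\Hom$ between the two sides collapsing to $\End^*(\1_\uk)$ --- does not get off the ground.

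The paper's argument avoids this entirely by working with the connecting map $\eta$ rather than with any putative equivalences. The point is that $\eta$ is built from the elementary maps $\eta_i\colon \tau_i\1_\uk \to \tau'_i\1_\uk$, and these already have canonical curved lifts $\tilde{\eta}_i$ (the maps from Lemma~\ref{lem:square}); composing them gives a deformation $\tilde{\eta}$ of $\eta$ with no choices to be made and hence no coherence problem. One then applies Proposition~\ref{contractlem} to the truncation of $(\tau'_\omega)^{2\ell}\,\mathrm{Cone}(\tilde{\eta})$ to degrees $\ge -d(\ell)$: since its $u=0$ specialization is contractible, so is the deformed truncation, and convergence of the curved limit follows. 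The moral is that you should deform the \emph{connecting map} directly rather than try to extend nonexistent equivalences.
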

\begin{proof}
We will prove that the corresponding limit $\lim_{\ell \to \infty} (\tau'_{\omega})^{2 \ell} \1_\uk$ in ${\tKomm}(\dot{\Ucat}_Q^m)[u]$ converges. The argument from \cite{Cautis} that this limit exists in $\Kom^-(\dot{\Ucat}_Q^m)$ relies on a connecting map $\eta: \1_\uk \to (\tau'_\omega)^2 \1_\uk$ (c.f. \cite[Section 5.2]{Cautis}) such that
$$(\tau'_{\omega})^{2 \ell} Cone(\eta) \1_\uk$$
is isomorphic to a complex supported in cohomological degrees $< -d(\ell)$ where $d(\ell) \to \infty$ as $\ell \to \infty$. So we just need to deform this story.

The map $\eta$ is constructed from simpler $\1_\uk \to (\tau'_i)^2 \1_\uk$ or, equivalently, from maps $\eta_i: \tau_i \1_\uk \to \tau'_i \1_\uk$. It turns out that this rather simple map is just the map $\teta_i$ from Lemma \ref{lem:square} where we take $u=0$. Thus we already have a natural deformation for $\eta_i$ which leads to a natural deformation $\teta$ of $\eta$.

Now consider the truncation of $(\tau'_{\omega})^{2 \ell} Cone(\teta) \1_\uk$ to degrees $\ge -d(\ell)$. When restricted to $u=0$ this is homotopic to zero. It follows by Proposition \ref{contractlem} that the same is true of the whole truncation $(\tau'_{\omega})^{2 \ell} Cone(\teta) \1_\uk$. This implies that $\lim_{\ell \to \infty} (\tau'_{\omega})^{2 \ell}$ converges in ${\tKomm}(\dot{\Ucat}_Q^m)[u]$.
\end{proof}

Once we have this deformation $\tcP$ the construction from \cite{Cautis} can be repeated to yield a deformed homology for links $L$ labeled by arbitrary representations of $\sl_m$.

\begin{corollary} \label{cor:slm-higher-clasp}
Suppose $L = L_1 \cup \dots \cup L_c$ is an oriented link whose components are labeled by $V_{\mu_1}, \dots, V_{\mu_c}$ where $\mu_i = \sum_j \Lambda_{k_{i,j}}$. Then the construction above yields a family of link homologies $\widetilde{H}_{\sl_m}^{*,*}(L)$ parametrized by $\uz = (z_{i,j})$. This homology satisfies the usual/expected splitting properties. In particular, if $c=1$ and $\mu = \Lambda_{k_{1,1}} + \Lambda_{k_{1,2}}$ then restricting to $\uz$ where $z_{1,1} \ne z_{1,2}$ recovers the homology of $L$ labeled by the (reducible) representation $V_{\Lambda_{k_{1,1}}} \otimes V_{\Lambda_{k_{1,2}}}$.
\end{corollary}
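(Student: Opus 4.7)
The plan is to follow the same strategy as in the proof of Theorem \ref{thm:deflink} but with cables and deformed clasps playing the role of single strands. First I would replace each component $L_i$ of $L$ by its $s_i$-fold cable, where the $j$th strand of the cable is coloured by $V_{\Lambda_{k_{i,j}}}$, and insert the deformed clasp $\widetilde{\cal{P}}_i$ of Proposition \ref{prop:clasp} at one chosen point on the $i$th cable. Writing $L^{\mathrm{cab}}$ for the resulting coloured link diagram (in fundamental representations), the construction of Section \ref{sec:defs1} applied to $L^{\mathrm{cab}}$ yields a $1$-morphism $\Psi_{\uy}(L^{\mathrm{cab}}) \in \Hom_{\widetilde{\Kom}(\dot{\cal{U}}_Q^m)[u]}(\1_{(\u0,\um)}, \1_{(\u0,\um)})$ as in \eqref{eq:def-deformed}. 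Inserting the $\widetilde{\cal{P}}_i$ is compatible with the curved setting precisely because Proposition \ref{prop:clasp} guarantees that, for any choice of $B_{\uy}$, the clasp deforms to a $(B_{\uy},B_{\uy})$-factorization, so one can compose it with the curved braidings surrounding it.

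Next I would identify the space of deformation parameters. Exactly as in the proof of Theorem \ref{thm:deflink}, the cups and caps in the standard presentation of $L^{\mathrm{cab}}$ impose linear relations on $\uy$ that collapse it to a vector with one free parameter per connected component of $L^{\mathrm{cab}}$; but each cable of $L_i$ is topologically connected, so the free parameters are indexed by the \emph{strands} of each cable, giving $\uz = (z_{i,j})$. Applying $\Hom_{\dot{\cal{U}}_Q^m}(\1_{(\u0,\um)},-)$ then produces the desired family $\widetilde{H}^{*,*}_{\sl_m}(L)$. Invariance under Markov/Reidemeister-type moves between different presentations of $L$ follows from (i) the deformed braid relations of Proposition \ref{deformedbraidprop}, (ii) the absorbtion properties of the clasp $\widetilde{\cal{P}}_i$ (which are inherited from the undeformed clasp by a Proposition \ref{contractlem} / Proposition \ref{extensionprop} argument applied to the fact that $\cal{P}_i$ absorbs $\tau_j',\tau_j$ on its cabled strands, using that the null-homotopies extend to the deformed setting by the same uniqueness-of-deformation argument as in Proposition \ref{prop:clasp}), and (iii) the compatibility of the clasps with cups/caps. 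Each of these is established for the undeformed complex in \cite{Cautis}, and Propositions \ref{contractlem} and \ref{extensionprop} lift them to the curved setting essentially for free.

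Finally, for the splitting statement, the key observation is that when the parameters $z_{i,j_1}$ and $z_{i,j_2}$ on two strands of the $i$th cable are distinct, the restriction of the background bubble endomorphism $B_{\uy}$ to those strands becomes invertible on suitable degree-$2$ subspaces, so that Lemma \ref{lem:square} applies to every braiding $\tau_{k,c}$ appearing in the definition of $\widetilde{\cal{P}}_i = \lim_{\ell} (\tau_{\omega,c}')^{2\ell}$. This forces the deformed full twists in the definition of $\widetilde{\cal{P}}_i$ to become invertible (after inverting $u$), hence $\widetilde{\cal{P}}_i$ collapses to the identity on the cable of the $i$th component, yielding the homology of the cable coloured by the \emph{reducible} representation $V_{\Lambda_{k_{i,1}}} \otimes \cdots \otimes V_{\Lambda_{k_{i,s_i}}}$. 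The splitting into components when all the $z_{i,j}$'s coming from different components are distinct then follows from Theorem \ref{thm:deflink} applied to $L^{\mathrm{cab}}$. The main obstacle I expect is the last step: pinning down rigorously the argument that distinct deformation parameters on distinct cable strands make the deformed clasp homotopy-equivalent to the identity on the cable, since one must control the limit defining $\widetilde{\cal{P}}_i$ uniformly in the deformation parameter while also tracking the action of the Weyl group on the relevant curvature elements.
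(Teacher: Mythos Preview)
Your proposal is correct and aligns with the paper's approach: the paper gives no explicit proof of this corollary, treating it as an immediate consequence of Proposition~\ref{prop:clasp} together with the remark that ``the construction from \cite{Cautis} can be repeated,'' and your outline is precisely a fleshing-out of what that repetition entails (cable, insert deformed clasps, apply the machinery of Section~\ref{sec:defs1}, and use Lemma~\ref{lem:square} for the splitting). One phrasing to tighten: where you write ``each cable of $L_i$ is topologically connected,'' you presumably mean that each \emph{strand} of the cable is a separate connected component of $L^{\mathrm{cab}}$, which is what yields one parameter $z_{i,j}$ per strand; as written the sentence reads oppositely to its intended conclusion.
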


\subsection{Comparison with \cite{CK4}} \label{sec:comparison}

Recall that in \cite{CaKa-sl2} and subsequent papers an algebro-geometric definition of $\sl_m$ link homology was developed. This construction was ``deformed'' in \cite{CK4} to yield a deformed theory with the same properties as the link homologies $\widetilde{H}_{\sl_m}^{*,*}(L)$ from Theorem \ref{thm:deflink}. Without going into too many details we would like to compare the construction of these two deformations.

In \cite{CK4} the role of $\dot{\Ucat}_Q^m$ is played by a 2-representation $\cK_{{\rm Gr},m}$. The objects in $\cK_{{\rm Gr},m}$ are the derived categories of coherent sheaves on certain convolution varieties $Y(\uk)$ associated to the affine Grassmannian of type $PGL_m$. These categories are $\Z$-graded with the grading shift denoted $\{1\}$. The 2-functor $\Phi: \dot{\Ucat}_Q \to \cK_{{\rm Gr},m}$ takes $\la 1 \ra \mapsto [1] \{-1\}$.

The source of the deformation in \cite{CK4} are certain deformations $\bY(\uk) \to \bA^{2N}$ of the varieties $Y(\uk)$ (these deformations are very natural from the perspective of the Beilinson-Drinfeld Grassmannian). Here points of $\bA^{2N}$ can be identified with sequences $\uy$. In general, a geometric deformation gives us degree two classes in the Hochschild cohomology of the variety. More precisely, this class is the product of the Atiyah and Kodaira-Spencer classes (c.f. \cite{HT}). In our case this yields a linear map
\begin{align}
\label{eq:phi} \C^{2N} & \to \End^2_{\cK_{{\rm Gr},m}}(\1_{\uk}) \\
\nonumber \uy & \mapsto B_\uy.
\end{align}

Now consider the image of a complex $\tau_i \1_\uk \in \dot{\Ucat}_Q$ under the 2-functor $\Phi$. This be identified with a kernel $\Phi(\tau_i \1_\uk)$ (a sheaf) living on $Y(s_i(\uk)) \times Y(\uk)$. The main (technical) result of \cite{CK4} is that $\Phi(\tau_i \1_\uk)$ deforms along
$$\{ (s_i(\uy),\uy): \uy \in \bA^{2N} \} = \bA^{2N} \subset \bA^{2N} \times \bA^{2N}.$$
The proof requires a more detailed geometric understanding of $\Phi(\tau_i \1_\uk)$. One should compare this result with Proposition \ref{prop:curved} which states that $\tau_i \1_\uk$ deforms along
$$\{ (s_i(b),b): b \in \End^2_{\dot{\Ucat}_Q^m}(\1_\uk) \} \subset \End^2_{\dot{\Ucat}_Q^m}(\1_{s_i(\uk)}) \times \End^2_{\dot{\Ucat}_Q^m}(\1_{\uk}).$$

The geometric deformation of $\Phi(\tau_i \1_\uk)$ has the property that its fiber over any $(s_i(\uy),\uy)$ with $y_i \ne y_{i+1}$ is the graph of an isomorphism
$$\bY(\uk)|_{\uy} \xrightarrow{\sim} \bY(s_i(\uk))|_{s_i(\uy)}.$$
This isomorphism is actually an involution (both these facts are easy to see). Hence, under the hypothesis that $y_i \ne y_{i+1}$, this deformation is isomorphic to its inverse. This result should be compared to Lemma \ref{lem:square}.

Another remark involves the formal indeterminate $u$. Recall that $u$ in $\dot{\Ucat}_Q$ has bi-degree $[2]\la -2 \ra$. Since the 2-functor $\Phi$ sends $\la 1 \ra \mapsto [1]\{-1\}$ and $[1] \mapsto [1]$ it follows that the image of $u$ has bi-degree $\{2\}$ in $\cK_{{\rm Gr},m}$. This agrees with the fact that the parameter space $\bA^{2N}$ of the deformation $\bY(\uk)$ is equipped with a dilating $\C^\times$ action of weight $\{2\}$.

In the undeformed case a procedure for comparing link homologies constructed via skew Howe duality was introduced in \cite[Section 7.5]{Cautis}. The rough idea is that any two link homologies constructed from a categorical action on a $2$-category $\cal{K}$ whose (nonzero) weight spaces can be identified with those of $\bigwedge_q^{m\infty}(\C^m \otimes \C^{2\infty})$ are automatically isomorphic. This can be used to show that the (underformed) link homologies defined using the affine Grassmannian~\cite{Cautis} must agree with (for example) the link homology $H_{\sl_m}^{*,*}(L)$ defined in Section \ref{sec:background1} using the 2-category $\dot{\Ucat}_Q^m$.

We expect the same is true in the deformed setting -- namely, that the deformed homology from \cite{CK4} is isomorphic to $\widetilde{H}_{\sl_m}^{*,*}(L)$. To prove this given the results mentioned above it remains to identify the deformation in \cite{CK4} and, more precisely, the induced map (\ref{eq:phi}) with the map
\begin{align*}
\C^{2N} & \to \End^2_{\dot{\Ucat}_Q^m}(\1_{\uk}) \\
\uy & \mapsto B_\uy
\end{align*}
from Section \ref{sec:defs1}. Although this is not terribly difficult, it is a bit technical and beyond the scope of the current paper.

\subsection{Example}

%\begin{example}
Consider the Hopf link $L$ where both components are labeled by the standard representation $V_{\Lambda_1}$ of $\mf{sl}_m$. We also suppose the components are labeled by deformation parameters $y_1$ and $y_2$.

\begin{equation} \label{hopfpic}
L \;\; = \;\;
\hackcenter{ \begin{tikzpicture} [scale=1.2]
\draw[thick] (.1,.9) .. controls ++(0,-.05) and ++(0,-.01) .. (0,1) .. controls ++(0,.3) and ++(0,.3) .. (-.5,1) to
    (-.5,0) .. controls ++(0,-.3) and ++(0,-.3) ..  (0,0)  ..controls ++(0,.2) and ++(0,-.2) .. (.5,.5)
        .. controls ++(0,.05) and ++(0,-.05) .. (.4,.7);
\draw[thick] (.1, .4) to (0,.5) .. controls ++(0,.2) and ++(0,-.2) .. (.5,1) .. controls ++(0,.3) and ++(0,.3) ..(1,1) to
        (1,0) .. controls ++(0,-.3) and ++(0,-.3) .. (.5,0) .. controls ++(0,.05) and ++(0,-.01) .. (.4,.1);
 \node at (.1,-.2) {$\scs 1$};
  \node at (.4,-.2) {$\scs 1$};
\end{tikzpicture}}
\end{equation}
Following the calculation in \cite[Section 10.3]{Cautis} we get that the complex for the Hopf link is homotopic to
%\begin{equation*}
%q^{-3}[m][m-1] \Bbbk \underset{\underset{cu \Id}{\longleftarrow}}{\overset{0}%{\longrightarrow}}  q^{-1}[m][m-1] \Bbbk \overset{f}{\longrightarrow}  [m][m]\Bbbk
%\end{equation*}
\begin{equation*}
\oplus_{[m]} \oplus_{[m-1]} \Bbbk \langle -3 \rangle \underset{\underset{cu \Id}{\longleftarrow}}{\overset{0}{\longrightarrow}}
\oplus_{[m]} \oplus_{[m-1]} \Bbbk \langle -1 \rangle  \overset{f}{\longrightarrow} \oplus_{[m]} \oplus_{[m]} \Bbbk
\end{equation*}
where $f$ is an injective map and $c = y_1-y_2$.

Thus, if $c=0$, we get the following for the $\mf{sl}_m$ homology:
\[
\oplus_{j} \widetilde{H}_{\sl_m}^{i,j}(L) =
\left\{
  \begin{array}{ll}
    \oplus_{[m]} \Bbbk \langle m-1 \rangle & \hbox{ if $i=0$} \\
    \oplus_{[m][m-1]}  \Bbbk \langle -3 \rangle & \hbox{ if $i=-2$} \\
      0 & \hbox{ otherwise. }
  \end{array}
\right.
\]

%the Poincar\'e polynomial for the classical $\mf{sl}_m$ homology is
%\begin{equation*}
%\widetilde{H}_{\sl_m}^{*,*}(L)=q^{m-1}[m]+t^{-2}q^{-3}[m][m-1]
%\end{equation*}
%where
%the Poincar\'e polynomial is given by
%\begin{equation*}
%\sum_{i,j} \dim \widetilde{H}_{\sl_m}^{i,j}(L) t^i q^j
%\end{equation*}
%and $\widetilde{H}_{\sl_m}^{i,j}(L)$ denotes the piece of the homology in homological degree $i$ and internal degree $-j$ and

If $ c \neq 0$, then the homology has rank $m^2$.  This is the rank of the homology of the unlink with two components which is what we expect from Lemma \ref{lem:square}. Compare these results with the analogous HOMFLYPT homology computation in \cite[Example 3.7]{GHy}.

%\end{example}

\section{Application: deformed coloured HOMFLYPT homology} \label{sec-homfly}

In this section we continue to work with the choice of scalars $Q$ from Remark~\ref{weylactionstandard}.
\subsection{Background}\label{sec:back2}
We now explain an analogue of Section \ref{sec-slm} for coloured HOMFLYPT homology. Let us suppose that we have a link $L$ which is presented as the closure of a braid $\beta$ on $N$ strands. We colour the components of this link with arbitrary non-negative integers so that the bottom and top of $\beta$ is coloured $\uk = (k_1, \dots, k_N)$. We denote $d := \sum_i k_i$.

The starting point of our construction is the $2$-category $\Ucat_Q$ where the Cartan data is that of $\sl_{N}$ and the choice of scalars $Q$ has been fixed as in Remark~\ref{weylactionstandard}. As in Section \ref{sec-slm}, we identify the weights of $\Ucat_Q$ with sequences $\uk$ of integers such that $\sum_i k_i = d$. We will now define an integrable $2$-representation $\cK_N^d$ of $\Ucat_Q$.

First we fix some notation following \cite[Section 3.1]{Cautis-Rem}. For $k \ge 0$ denote
$$A_k := \Bbbk[x_1, \dots, x_k]^{S_k} = \Bbbk[\varepsilon_1, \dots, \varepsilon_k]$$
where the symmetric group $S_k$ acts naturally and where the $\varepsilon_i$ are elementary symmetric functions. These algebras are naturally $\Z$-graded with $x_i$ having degree $2$. The convention we use is that multiplication by $x_i$ induces a map $x_i: A_k \to A_k \{2\}$ where $\{\cdot\}$ denotes a shift in grading.

For a sequence $\uk$ we define
$$A_\uk := A_{k_1} \otimes_{\Bbbk} \dots \otimes_{\Bbbk} A_{k_N}.$$
We will denote the element $1 \otimes \dots \otimes \varepsilon_i \otimes \dots \otimes 1$ by $\varepsilon_i^{(j)}$ where the $\varepsilon_i$ occurs in the $j$th factor of $A_\uk$.

The (nonzero) objects in $\cK_N^d$ are indexed by sequences $\uk$ where all $k_i \ge 0$ and $\sum_i k_i = d$. The $1$-morphisms consist of $(A_{\uk}, A_{\uk'})$-bimodules which are flat as $A_{\uk}$ and also $A_{\uk'}$-modules. Composition of these $1$-morphisms is tensor product. The $2$-morphisms are then morphisms of bimodules.

Notice that we have natural inclusion maps
$$A_{(\dots,k_i+k_{i+1},\dots)} \to A_{(\dots,k_i,k_{i+1},\dots)}.$$
Subsequently, the algebra $A_{(\dots,k_i-1,1,k_{i+1},\dots)}$ is both an $A_\uk$-module as well as an $A_{\uk+\alpha_i}$-module.

\begin{proposition}\label{prop:gamma}
There exists a $2$-functor $\Gamma_N^d: \Ucat_Q \to \cK_N^d$ which sends $\la 1 \ra$ to $\{1\}$ and
\begin{align*}
\sE_i \1_{\uk} &\mapsto A_{(\dots,k_i-1,1,k_{i+1},\dots)} \{k_i-1\} \\
\1_{\uk} \sF_i &\mapsto A_{(\dots,k_i-1,1,k_{i+1},\dots)} \{k_{i+1}\} \\
\End_{\Ucat_Q}^2(\1_\uk) \ni b_j &\mapsto - \varepsilon_1^{(j)} + \varepsilon_1^{(j+1)} \in \End_{\cK_N^d}^2(\1_\uk).
\end{align*}
\end{proposition}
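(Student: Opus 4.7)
The plan is to build the 2-functor $\Gamma_N^d$ in two stages: first specify the image of the generating 2-morphisms, then verify the defining relations of $\cal U_Q(\mathfrak{sl}_N)$. The image of objects and 1-morphisms is already prescribed, so the content is in defining the 2-morphisms and checking relations.

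First I would define the action on generators. A dot on an upward strand labeled $i$, carrying a weight $\uk$ on the right, acts on the bimodule $A_{(\dots,k_i-1,1,k_{i+1},\dots)}$ by multiplication by the variable $x$ sitting in the "1" tensor slot; this has degree $2 = i \cdot i$. A crossing between two $\sE$-strands labeled $i,j$ acts via the standard action of the nilHecke/KLR algebra on $A_{(\dots,k_i-1,1,\dots,1,k_{j+1},\dots)}$ (divided differences when $i=j$, plus $t_{ij},t_{ji}$-rescaled identity/dot maps when $i\cdot j=-1,0$, using the scalars from Remark~\ref{weylactionstandard}). The cap and cup 2-morphisms are the adjunction unit and counit: one uses the fact that $A_{(\dots,k_i-1,1,k_{i+1},\dots)}$ is a Frobenius extension of both $A_{\uk}$ and $A_{\uk+\alpha_i}$, so induction and restriction along the inclusions are biadjoint (up to the Frobenius trace form), and the required shifts match the stated degrees $1\pm\lambda_i$ once one computes the top degree of the Frobenius trace using $k_i$ and $k_{i+1}$. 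Sideways crossings are then forced by \eqref{eq_crossl-gen-cyc}.

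Next, I would verify the relations in the order in which they appear in Section~\ref{sec:catquantum}. The KLR quadratic, dot-slide and cubic relations are purely a statement about the action of the nilHecke/KLR algebra on products of polynomial rings, and follow from the well-known standard polynomial representation rescaled by the scalars $t_{ij},t_{ji}$ (this is why the rescaling functors of \cite[\S3.3]{Lau-param} apply). Cyclicity \eqref{eq_cyclic_dot-cyc}--\eqref{eq_cyclic} and the mixed relations \eqref{mixed_rel-cyc} reduce to direct computations of compositions of adjunction units/counits with the KLR crossing, and are straightforward because for $i \neq j$ the two possible composition orders of induction and restriction agree as bimodules.

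The more substantial steps are the bubble normalization, the infinite Grassmannian relations, and the $\mathfrak{sl}_2$ decomposition \eqref{eq:sl2}. Here one computes that a counterclockwise (respectively clockwise) dotted bubble of label $i$ on a region $\uk$ acts as multiplication by a specific symmetric function in the variables sitting in positions $i$ and $i+1$. The degree-zero bubbles are identities by construction, and the identification $b_j \mapsto -\varepsilon_1^{(j)}+\varepsilon_1^{(j+1)}$ is then the degree-two instance of this computation; the higher degree fake bubble formulas become the classical generating function identity between the complete and elementary symmetric functions of the $i$-th and $(i{+}1)$-th blocks, which is precisely the infinite Grassmannian relation. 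The $\mathfrak{sl}_2$ decomposition then follows by writing down the natural short exact/split sequence of bimodules
\[
A_{(\dots,k_i,k_{i+1},\dots)} \;\longrightarrow\; \mathrm{Ind}\,\mathrm{Res}\; A_{(\dots,k_i,k_{i+1},\dots)} \;\longrightarrow\; \mathrm{Res}\,\mathrm{Ind}\; A_{(\dots,k_i,k_{i+1},\dots)}
\]
and identifying the splitting with the right-hand side of \eqref{eq:sl2} via Schubert-style bases for $A_{(\dots,1,k_i+k_{i+1}-1,\dots)}$ over $A_\uk$.

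The main obstacle is this last verification: matching signs and normalizations in the $\mathfrak{sl}_2$ decomposition and the infinite Grassmannian relations with our chosen scalars $Q$. Once one knows that $b_j$ must map to $-\varepsilon_1^{(j)}+\varepsilon_1^{(j+1)}$ (forced by the bubble computation described above, together with $(b_j,\alpha_i)_Q=(\alpha_j,\alpha_i)$ from Remark~\ref{weylactionstandard}), all other normalizations are determined, and the remaining verifications are mechanical computations with symmetric functions analogous to those carried out in \cite[\S3.1]{Cautis-Rem} and \cite{KLMS}. Integrability of the resulting 2-representation, needed to invoke Proposition~\ref{originalbraidprop} in the sequel, is immediate from the finite-dimensionality of $\Hom$-spaces between the bimodules in each fixed degree and from the one-dimensionality of $\End^0(\mathbf 1_{\uk})$.
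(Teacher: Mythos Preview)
Your proposal is correct and outlines exactly the construction that the paper invokes by citation: the paper's proof simply identifies $\Gamma_N^d$ as the equivariant flag $2$-representation of \cite[\S6.3.3]{KL3} (see also \cite{Lau2} and \cite[\S4.3]{MSV}), with the bubble formula read off from \cite[Eq.~6.47]{KL3} or \cite[Eq.~4.12]{MSV}. Your sketch is essentially a reconstruction of what those references carry out in detail, so the approaches coincide.
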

\begin{proof}
This $2$-functor is (up to a grading shift) the equivariant flag $2$-representation \cite[Section 6.3.3]{KL3}, see also \cite{Lau2} and \cite[Section 4.3]{MSV}. The image of bubbles in this $2$-representation are given in \cite[Equation 6.47]{KL3} or \cite[Equation 4.12]{MSV}.
\end{proof}

Next let us define the following shifted complexes
$$\hat{\tau}_i \1_\uk := \begin{cases}
\tau_i \1_\uk [-k_{i+1}] \{k_{i+1}+k_ik_{i+1}\} & \text{ if } \la \uk, \alpha_i \ra \le 0 \\
\tau_i \1_\uk [-k_i] \{k_i+k_ik_{i+1}\} & \text{ if } \la \uk, \alpha_i \ra \ge 0. \end{cases}$$
Using these shifted complexes is not crucial but it does simplify the grading/cohomological shifts in the long run and it does agree with the notation/definition from \cite{Cautis-Rem}.

Given a braid $\beta$, we denote by $\hat{\tau}_\beta$ the corresponding composition of $\hat{\tau}_i$. Then the $2$-functor $\Gamma_N^d$ from Proposition \ref{prop:gamma} gives us a complex of $(A_\uk,A_\uk)$-bimodules $\Gamma_N^d(\hat{\tau}_{\beta})$. Taking Hochschild cohomology of this complex of bimodules defines a triply graded homology $HH(L)$ (see \cite[Theorem 4.1]{Cautis-Rem}).

\subsection{Deformations}\label{sec:defs2}
To obtain the desired deformation of the construction above, we will label crossings with curved complexes. Notice that we have
$$\End^*_{\cK_N^d}(\1_{\uk}) = \Hom^*_{(A_\uk,A_\uk)}(A_\uk,A_\uk) = A_\uk.$$
Given $\uy = (y_1, \dots, y_N)$ we denote
$$B_\uy := \sum_i y_i \varepsilon_1^{(i)} \in \End^*_{\cK_N^d}(\1_{\uk}).$$

\begin{lemma}\label{lem:curved}
The complex $\Gamma_N^d(\tau_{i,-y_i+y_{i+1}} \1_{\uk})$ is a $(B_{s_i(\uy)}, B_\uy)$ factorization inside $\tKom(\cK_N^d)[u]$.
\end{lemma}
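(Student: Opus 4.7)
The plan is to transport the curvature formula of Proposition~\ref{prop:curved} through the $2$-functor $\Gamma_N^d$. The crux is to identify a bubble $b \in \End^2_{\dot{\Ucat}_Q}(\1_\uk)$ whose image under $\Gamma_N^d$ matches $B_\uy$ up to a central element, together with an analogous match for $\Gamma_N^d(s_i(b))$ and $B_{s_i(\uy)}$.

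The first observation is that the element $p_1 := \sum_{j=1}^N \varepsilon_1^{(j)} \in \End^2_{\cK_N^d}(\1_\uk)$ is central: it is multiplication by the total sum of $x$-variables, which acts identically from the left and from the right on any bimodule produced by $\Gamma_N^d$. The image of $\End^2_{\dot{\Ucat}_Q}(\1_\uk)$ under $\Gamma_N^d$ is spanned by the telescopes $\Gamma_N^d(b_j) = -\varepsilon_1^{(j)} + \varepsilon_1^{(j+1)}$, and hence consists precisely of the elements $\sum_j a_j \varepsilon_1^{(j)}$ with $\sum_j a_j = 0$. The normalized element $B_\uy^\circ := B_\uy - \tfrac{1}{N}(\sum_\ell y_\ell)\, p_1$ therefore lies in the image, and I would take $b = \sum_{j=1}^{N-1} c_j b_j$ to be the unique bubble combination with $\Gamma_N^d(b) = B_\uy^\circ$.

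A short computation with the $\sl_N$ Cartan matrix gives $(b, \alpha_i)_Q = 2c_i - c_{i-1} - c_{i+1} = -y_i + y_{i+1}$, so Proposition~\ref{prop:curved} applies and endows $\tau_{i,-y_i+y_{i+1}}\1_\uk$ with a $(s_i(b), b)$-factorization structure in $\tKom(\dot{\Ucat}_Q)[u]$. The Weyl action $s_i(b_j) = b_j - (\alpha_i, \alpha_j) b_i$ from Remark~\ref{weylactionstandard} then yields
\[
\Gamma_N^d(s_i(b)) = \Gamma_N^d(b) - (b, \alpha_i)_Q\,\Gamma_N^d(b_i) = B_\uy^\circ + (y_{i+1} - y_i)(\varepsilon_1^{(i)} - \varepsilon_1^{(i+1)}) = B_{s_i(\uy)}^\circ.
\]
Applying $\Gamma_N^d$ to the factorization therefore exhibits $\Gamma_N^d(\tau_{i,-y_i+y_{i+1}}\1_\uk)$ as a $(B_{s_i(\uy)}^\circ, B_\uy^\circ)$-factorization. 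Since $B_\uy - B_\uy^\circ = \tfrac{1}{N}(\sum_\ell y_\ell)\, p_1 = B_{s_i(\uy)} - B_{s_i(\uy)}^\circ$ is central, and shifting both curvatures by the same central element preserves the factorization equation $\Delta^2 = (z_B - z_A)u$, this is equivalently a $(B_{s_i(\uy)}, B_\uy)$-factorization.

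The only conceptual obstacle is the dimension mismatch between $\End^2_{\dot{\Ucat}_Q}(\1_\uk)$ (dimension $N-1$) and the space of deformation parameters $\uy \in \Bbbk^N$; the centrality of $p_1$ in $\cK_N^d$ absorbs this discrepancy cleanly, after which everything reduces to an application of Proposition~\ref{prop:curved} together with the telescoping formula for $\Gamma_N^d(b_j)$.
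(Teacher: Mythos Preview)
Your proof is correct and follows essentially the same route as the paper: apply Proposition~\ref{prop:curved} to a bubble $b$ whose image under $\Gamma_N^d$ agrees with $B_\uy$ up to a multiple of the central element $p_1 = \sum_j \varepsilon_1^{(j)}$, then use centrality of $p_1$ to pass from $(B_{s_i(\uy)}^\circ, B_\uy^\circ)$ to $(B_{s_i(\uy)}, B_\uy)$. The paper phrases the same reduction as writing $\uy = \uy' + (c,\dots,c)$ with $\sum_i y_i' = 0$ and checking that $B_{(c,\dots,c)}$ commutes with all $\sE_i,\sF_i$ in $\cK_N^d$; your normalization $B_\uy \mapsto B_\uy^\circ$ is the identical idea, just packaged more explicitly.
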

\begin{proof}
As in Section \ref{sec:defs1}, we define $b_\uy \in \End^2_{\dot{\Ucat}_Q}(\1_\uk)$ as the linear combination of bubbles determined by the relations $(b_\uy, \alpha_i) = -y_i+y_{i+1}$. Then we know that $\tau_{i,-y_i+y_{i+1}} \1_{\uk}$ is a $(b_{s_i(\uy)}, b_\uy)$ factorization.

On the other hand, the third relation in Proposition \ref{prop:gamma} implies that $\Gamma_N^d(b_\uy) = B_\uy$ if $\sum_i y_i = 0$. It follows that $\Gamma_N^d(\tau_{i,-y_i+y_{i+1}} \1_{\uk})$ is a $(B_{s_i(\uy)}, B_\uy)$ factorization if $\sum_i y_i = 0$.

More generally, we can write any $\uy$ as $\uy' + (c,\dots,c)$ where $\sum_i y_i' = 0$. Then $B_\uy = B_{\uy'}$ and so, by the above, it remains to show that
$$\Id_{\1_{s_i(\uk)}} \Id_{\Gamma_N^d(\tau_{i,-y_i+y_{i+1}})} B_{(c,\dots,c)} = B_{(c,\dots,c)} \Id_{\Gamma_N^d(\tau_{i,-y_i+y_{i+1}})} \Id_{\1_{\uk}} \in \End_{\cK_N^d}^2(\1_{s_i(\uk)} \Gamma_N^d(\tau_{i,-y_i+y_{i+1}}) \1_{\uk}).$$
But this is clear since it is not difficult to check that $B_{(c,\dots,c)}$ commutes with all $\sE_i$ and $\sF_i$ in $\cK_N^d$.
\end{proof}

Repeating the construction from Section \ref{sec:back2} we start with a braid $\beta$ and apply Lemma \ref{lem:curved} repeatedly to obtain a $(B_{\beta(\uy)}, B_\uy)$-factorization $\Gamma_N^d(\tau'_{\beta})$. To finish, we apply the functor $\Hom^*_{(A_\uk,A_\uk)}(A_\uk, \cdot)$. Since elements of $\End^2_{(A_\uk,A_\uk)}(A_\uk)$ (such as $B_\uy$) commute with any $1$-morphism of $(A_\uk,A_\uk)$-bimodules, it follows that the resulting curved complex has curvature $B_{\beta(\uy)} - B_\uy$. Thus, if we choose $\uy$ so that $\uy = \beta(\uy)$, then we get an actual complex (with zero curvature). We denote the resulting triply graded vector space ${\widetilde{HH}}(L)$.

\begin{theorem}\label{thm:defHOMFLY}
Suppose $L = L_1 \cup \dots \cup L_c$ is an oriented, coloured link with $c$ components. Then the construction above defines a family of link homologies $\widetilde{HH}(L)$ parametrized by $\uz \in \Bbbk^c$. This homology has the same splitting properties as $\widetilde{H}_{\sl_m}^{*,*}(L)$ from Theorem \ref{thm:deflink}. In particular, for distinct values of $\uz$, the deformed homology of $L$ is isomorphic to the homology of the disjoint union of its components, and moreover, there exists a spectral sequence starting with $HH(L)$ and converging to $\widetilde{HH}(L)$.
\end{theorem}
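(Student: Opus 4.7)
The plan is to parallel the four-step structure of Theorem~\ref{thm:deflink}, replacing the $\mf{sl}_m$-data of Section~\ref{sec-slm} with the HOMFLYPT framework of Section~\ref{sec:back2}. One must establish: (i) well-definedness of $\widetilde{HH}(L)$ as a link invariant, (ii) the parameter count, (iii) the Batson--Seed splitting property, and (iv) the spectral sequence.

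For (i), braid invariance inside $\tKom(\cK_N^d)[u]$ follows by applying the $2$-functor $\Gamma_N^d$ to the statement of Proposition~\ref{deformedbraidprop}. The Markov stabilization move is handled by revisiting the proof in \cite[Section~4]{Cautis-Rem} and deforming each homotopy equivalence step by step: since each such equivalence is between invertible complexes whose underlying undeformed versions are homotopy equivalent, Proposition~\ref{extensionprop} uniquely extends the equivalences to the curved setting. For (ii), the condition $\uy = \beta(\uy)$ cuts out the subspace of $\Bbbk^N$ fixed by the permutation of $\{1, \dots, N\}$ induced by $\beta$. The cycles of this permutation are in natural bijection with the components $L_1, \dots, L_c$ of $L$, so this fixed subspace is exactly $c$-dimensional, yielding the parameterization $\uz \in \Bbbk^c$ with one deformation parameter per component.

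For (iii), when the $z_i$ are distinct, Lemma~\ref{lem:square} lets us pass to the localized category $\tKom(\cK_N^d)[u^\pm]$ and replace each $\Gamma_N^d(\tau_{i,-y_i+y_{i+1}})$ with the isomorphic $\Gamma_N^d(\tau'_{i,-y_i+y_{i+1}})$ at every crossing where $y_i \neq y_{i+1}$. Combined with the braid relations from Proposition~\ref{deformedbraidprop}, this lets us slide strands belonging to distinct components past each other in the localized category, reducing $\Gamma_N^d(\hat{\tau}_\beta)$ to the curved complex of the disjoint union of the $L_i$. Applying $\Hom^*_{(A_\uk,A_\uk)}(A_\uk, \cdot)$ then produces the tensor product $\widetilde{HH}(L_1) \otimes_\Bbbk \cdots \otimes_\Bbbk \widetilde{HH}(L_c)$. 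For (iv), the $u$-adic filtration on $\tKom(\cK_N^d)[u]$ gives a spectral sequence whose $E_1$-page is obtained by specializing $u = 0$ in the connection $\Delta = d^+ + cud^-$, recovering the undeformed differential $d^+$. Taking Hochschild cohomology then yields $HH(L)$ on the $E_1$-page, and convergence follows from the boundedness of the braid complex together with Proposition~\ref{contractlem}.

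The main obstacle I expect to be (iii): although the parameter count guarantees $c$ free parameters, establishing the splitting requires carefully iterating the isomorphism of Lemma~\ref{lem:square} across all crossings between distinct components while maintaining compatibility with the curvature equations. In particular, one must verify that after the strand-separation process the resulting curved complex on each component is, up to isomorphism in the localized category, the curved Rickard complex whose Hochschild cohomology computes $\widetilde{HH}(L_i)$ with the single parameter $z_i$. Once this bookkeeping is in place, (i), (ii), and (iv) are then largely formal consequences of the general theory developed in Section~\ref{sec-curved} together with the results already proven for the undeformed HOMFLYPT construction.
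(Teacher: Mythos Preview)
Your approach is essentially the same as the paper's, which is itself extremely terse: the paper simply says that link invariance is verified ``as in the proof of \cite[Theorem 4.1]{Cautis}'' for the undeformed complex, and that the remaining claims follow from Lemma~\ref{lem:square} exactly as in the proof of Theorem~\ref{thm:deflink}. Your four-part breakdown (i)--(iv) is a reasonable elaboration of this, and points (ii), (iii), (iv) match the paper's intent.

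One caveat on (i): your invocation of Proposition~\ref{extensionprop} for the Markov~II stabilization move is not quite right, because that proposition requires the complexes in question to be \emph{invertible}, and the bimodule complexes appearing in the Markov~II argument are not invertible in general (one has added a strand and is closing it off). The correct tool from Section~\ref{sec-curved} is rather Proposition~\ref{contractlem}: the undeformed Markov~II argument in \cite{Cautis-Rem} produces a homotopy equivalence whose cone is contractible at $u=0$, and Proposition~\ref{contractlem} then forces the deformed cone to be contractible as well. The paper sidesteps this by not specifying a mechanism at all, but if you are going to name one, it should be Proposition~\ref{contractlem} rather than Proposition~\ref{extensionprop}. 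Also note that the paper singles out link invariance (your (i)) as the only point needing genuine verification, whereas you flag (iii) as the main obstacle; in fact (iii) is handled identically to Theorem~\ref{thm:deflink} once Lemma~\ref{lem:square} is in hand.
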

\begin{proof}
We just need to verify that $\widetilde{HH}(L)$ is indeed a link invariant (the rest follows from Lemma \ref{lem:square} as it does in the proof of Theorem \ref{thm:deflink}). The fact that it is a link invariant can be verified as in the proof of \cite[Theorem 4.1]{Cautis} where the analogous result is proved for the undeformed complex $HH(L)$.
\end{proof}

\subsection{Clasps}\label{sec:clasps2}

In \cite{Cautis-Rem} clasps were constructed (just like in \cite{Cautis}) as a limit of twists. This allowed one to define a HOMFLYPT homology for links coloured by arbitrary partitions. The discussion from Section \ref{sec:clasps1} now repeats word for word to give us a deformation of these clasps in the context of HOMFLYPT homology. This leads to the following analogue of Corollary \ref{cor:slm-higher-clasp}.

\begin{corollary} \label{cor:HOMFLY-higher-clasp}
Suppose $L = L_1 \cup \dots \cup L_c$ is an oriented link whose components are coloured by partitions $(\mu_1, \dots, \mu_c)$ where $\mu_i = (\mu_i^{(1)} \ge \dots \ge \mu_i^{(s_i)})$ is the decomposition into parts. Then the constructions above yield a family of link homologies $\widetilde{HH}(L)$ parametrized by $\uz = (z_{i,1}, \dots, z_{i,s_i})$. This homology satisfies the usual/expected splitting properties. In particular, if $c=1$ and $\mu_1 = (\mu_1^{(1)} \ge  \mu_1^{(2)})$ then restricting to $\uz$ where $z_{1,1} \ne z_{1,2}$ recovers the homology of two unlinked copies of $L$ coloured by the partitions $\mu_1^{(1)}$ and $\mu_2^{(2)}$.
\end{corollary}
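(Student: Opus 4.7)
The plan is to adapt the cabling-and-clasp construction of \cite{Cautis-Rem} to the curved setting developed in Section \ref{sec-curved}, in direct parallel with how Corollary \ref{cor:slm-higher-clasp} was obtained from Theorem \ref{thm:deflink} via Proposition \ref{prop:clasp}. Given a link $L$ with $i$th component colored by $\mu_i=(\mu_i^{(1)}\geq\cdots\geq\mu_i^{(s_i)})$, first cable the $i$th component into $s_i$ parallel strands colored by the individual parts $\mu_i^{(j)}$, and insert clasp projectors $\cP_i$ (one for each component) that cut out the irreducible summand labeled by $\mu_i$ inside the tensor product of fundamental representations. Attach deformation parameters $z_{i,j}$ to each strand of the $i$th cable, giving the $\uz=(z_{i,j})$ family claimed.

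The first step is to establish the HOMFLYPT analog of Proposition \ref{prop:clasp}: the undeformed clasp $\cP\onell{\uk}\in\Kom^-(\cK_N^d)$ deforms to a $(B_\uy,B_\uy)$-factorization $\tcP\in\tKomm(\cK_N^d)[u]$ for any $B_\uy\in\End_{\cK_N^d}^2(\onell{\uk})$. The proof is the one sketched in Section \ref{sec:clasps1}: present $\cP$ as the limit $\lim_{\ell\to\infty}(\tau'_\omega)^{2\ell}\onell{\uk}$ of full twists, deform each truncation via Proposition \ref{deformedbraidprop} and Lemma \ref{lem:curved}, deform the connecting map $\eta$ to $\teta$ via Proposition \ref{extensionprop}, and apply Proposition \ref{contractlem} to the contractible truncations of $(\tau'_\omega)^{2\ell}\operatorname{Cone}(\teta)\onell{\uk}$ to conclude convergence in $\tKomm(\cK_N^d)[u]$.

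With $\tcP_i$ in hand, one repeats the construction of Section \ref{sec:defs2}: present $L$ as the closure of a braid $\beta$ on the cabled strands sandwiched between the $\tcP_i$'s, deform each crossing of $\beta$ via Lemma \ref{lem:curved}, compose to obtain a curved complex which is a $(B_{\beta(\uy)},B_\uy)$-factorization after tensoring with the deformed clasps on both sides, and apply the Hochschild cohomology functor $\Hom_{(A_\uk,A_\uk)}^*(A_\uk,-)$ to get $\widetilde{HH}(L)$. The curvature vanishes precisely for those $\uy$ fixed by $\beta$ modulo the clasp projection, and this locus is naturally parametrized by the vectors $\uz=(z_{i,j})$ with one entry per part of each partition (parameters on strands in the same clasp are allowed to differ because $\tcP_i$ is a $(B_\uy,B_\uy)$-factorization for arbitrary $\uy$, as in the preceding step). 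Invariance of $\widetilde{HH}(L)$ under isotopy reduces, after inserting the clasp absorption property that is forced by the limit definition, to Theorem \ref{thm:defHOMFLY}, exactly as in the undeformed argument of \cite[Theorem 4.1]{Cautis-Rem}.

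The splitting properties, including the special case $c=1$, $\mu_1=(\mu_1^{(1)}\geq\mu_1^{(2)})$ with $z_{1,1}\neq z_{1,2}$, follow from Lemma \ref{lem:square} in the same way as in Theorem \ref{thm:defHOMFLY}: whenever a curved braid generator $\tau_{i,c}$ is inserted between two strands whose attached parameters differ, the scalar $c=z_{i,j}-z_{i',j'}$ is nonzero, so $\tau_{i,c}\cong\tau'_{i,c}$ in the localized category $\tKom(\cK_N^d)[u^{\pm}]$ and the two strands can be slid past each other. Iterating this unlinks components (or parts of a cable) carrying distinct parameters, producing the claimed isomorphism with the homology of the disjoint union. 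The main obstacle is verifying that the inverse-limit convergence argument for the clasp transfers cleanly to the curved setting; all of the required pieces are assembled in Section \ref{sec-curved}, but one needs to confirm that the truncated connecting cones remain contractible after deformation, which is exactly the content of Proposition \ref{contractlem} applied in the bounded-above setting.
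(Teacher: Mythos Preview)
Your proposal is correct and follows essentially the same approach as the paper: the paper's own treatment of this corollary (Section~\ref{sec:clasps2}) simply asserts that ``the discussion from Section~\ref{sec:clasps1} now repeats word for word'' in the HOMFLYPT setting, and you have spelled out precisely what that repetition entails. Your outline of the clasp deformation via Proposition~\ref{contractlem}, the curved cabling construction, and the splitting argument via Lemma~\ref{lem:square} matches the intended logic exactly.
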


%\bibliographystyle{plain}
%\bibliographystyle{amsalpha}
%\bibliography{bib_fincoljones}

\providecommand{\bysame}{\leavevmode\hbox to3em{\hrulefill}\thinspace}
\providecommand{\MR}{\relax\ifhmode\unskip\space\fi MR }
% \MRhref is called by the amsart/book/proc definition of \MR.
\providecommand{\MRhref}[2]{%
  \href{http://www.ams.org/mathscinet-getitem?mr=#1}{#2}
}
\providecommand{\href}[2]{#2}

%
%
% ==============================================================================

\end{document}